\numberwithin{equation}{section}
\newcommand{\Typ}{\mbox{\rm Typ}}
\newcommand{\C}{\mathbb{C}} 
\newtheorem{theorem}{Theorem}[section]{\bf}{\it}
\newtheorem{proposition}[theorem]{Proposition}{\bf}{\it}
\newtheorem{corollary}[theorem]{Corollary}{\bf}{\it}
\newtheorem{lemma}[theorem]{Lemma}{\bf}{\it}
{\bf}{\it}
{\bf}{\it}
{\bf}{\it}
\theoremstyle{definition}
\newtheorem{definition}[theorem]{Definition}
\newtheorem{remark}[theorem]{Remark}
\newtheorem{example}[theorem]{Example}
\theoremstyle{plain}
\newcounter{theoremintro}
\newtheorem{thmintro}[theoremintro]{Theorem}
\theoremstyle{definition}
\title{Amenability and paradoxicality in semigroups and C$^*$-algebras}
\author[Pere Ara]{Pere Ara$^{1}$}
\address{Department of Mathematics, Universitat Aut\`{o}noma de Barcelona,
 08193 Bellaterra (Barcelona), Spain and Barcelona Graduate School of Mathematics (BGSMath).}
\email{para@mat.uab.cat}
\author[Fernando Lled\'{o}]{Fernando Lled\'{o}$^{2}$}
\address{Department of Mathematics, University Carlos~III Madrid,
  Avda.~de la Universidad~30, 28911 Legan\'{e}s (Madrid), Spain
  and Instituto de Ciencias Matem\'{a}ticas - ICMAT (CSIC - UAM - UC3M - UCM).}
\email{flledo@math.uc3m.es}
\author[Diego Mart\'{i}nez]{Diego Mart\'{i}nez$^{3}$}
\address{Department of Mathematics, University Carlos~III Madrid,
  Avda.~de la Universidad~30, 28911 Legan\'{e}s (Madrid), Spain
  and Instituto de Ciencias Matem\'{a}ticas - ICMAT (CSIC - UAM - UC3M - UCM).}
\email{lumartin@math.uc3m.es}
\date{\today}
\thanks{{$^{1}$ } Partially supported by MINECO and European Regional Development Fund, jointly, through the grant MTM2017-83487-P, by 
MINECO through the María de Maeztu Programme for Units of Excellence in R$\&$D (MDM-2014-0445)
and by the Generalitat de Catalunya through the grant 2017-SGR-1725.}
\thanks{{$^{2}$ } Supported by research projects MTM2017-84098-P and Severo Ochoa SEV-2015-0554 of the Spanish Ministry of Economy and Competition
(MINECO), Spain.}
\thanks{{$^{3}$ } Supported by research projects MTM2017-84098-P, Severo Ochoa SEV-2015-0554 and BES-2016-077968 of the Spanish Ministry of Economy and Competition
(MINECO), Spain.}
\keywords{amenability, paradoxical decompositions, F\o lner condition, semigroups, semigroup rings, inverse semigroup C*-algebra, proper infiniteness, amenable traces} 
\subjclass[2010]{20M18, 46L05, 43A07}
\begin{document}
  \begin{abstract}
    We analyze the dichotomy amenable/paradoxical in the context of (discrete, countable, unital) semigroups and corresponding semigroup rings. 
    We consider also F\o lner type characterizations of amenability and give an example of a semigroup whose semigroup ring is algebraically amenable but has no 
    F\o lner sequence. 
    
    In the context of inverse semigroups $S$ we give a characterization of invariant measures on $S$ (in the sense of Day) in terms of two notions: {\em domain measurability} and {\em localization}. Given a unital representation of $S$ in terms of partial bijections on some set $X$ we define a natural generalization of the uniform Roe algebra of a group, which we denote by $\mathcal{R}_X$. We show that the following notions are then equivalent: (1) $X$ is domain measurable; (2) $X$ is not paradoxical; (3) $X$ satisfies the domain F\o lner condition; (4) there is an algebraically amenable dense *-subalgebra of $\mathcal{R}_X$; (5) $\mathcal{R}_X$ has an amenable trace; (6) $\mathcal{R}_X$ is not properly infinite and (7) $[0]\not=[1]$ in the $K_0$-group of $\mathcal{R}_X$. We also show that any tracial state on $\mathcal{R}_X$ is amenable. Moreover, taking into account the localization condition, we give several C*-algebraic characterizations of the amenability of $X$. Finally, we show that for a certain class of inverse semigroups, the quasidiagonality of $C_r^*\left(X\right)$ implies the amenability of  $X$. The reverse implication (which is a natural generalization of Rosenberg's conjecture to this context) is false.
  \end{abstract}
  \maketitle
  \tableofcontents

  \section{Introduction}
    The notion of an amenable group was first introduced by von Neumann in \cite{N29} to explain why the paradoxical decomposition of the unit ball in $\mathbb{R}^n$ (the so-called \textit{Banach-Tarski} paradox) occurs only for dimensions greater than two (see \cite{TW16,P00,R02}). Later, F\o lner provided in \cite{F55} a very useful combinatorial characterization of amenability in terms of nets of finite subsets of the group that are almost invariant under left multiplication. This alternative approach was then used to study amenability in the context of algebras over arbitrary fields by Gromov \cite[\S 1.11]{G99} and Elek \cite{E03} (see also \cite{B08,CS08,ALLW18} as well as Definition~\ref{def:groups-amenability}~(\ref{item:5})), in operator algebras by Connes \cite{C76,C76vn} (see also Definition~\ref{def:F-alg}~(\ref{F-alg-2})) and in metric spaces by Ceccherini-Silberstein, Grigorchuk and de la Harpe in \cite{CGH99} (see also \cite{ALLW18}).

    A central aspect of the study of amenability in different mathematical structures is the dynamics inherent to the structure. From this point of view, one can divide the analysis depending on 
    whether the action is injective (e.g., in the case of groups) or singular (e.g., in the case of algebras or operator algebras, where the presence of non-zero divisors is generic). 
    Following this division, it usually happens that notions that are equivalent to amenability in the case of groups are no longer equivalent in a more general case. Thus, semigroups and, 
    in particular, inverse semigroups, provide an interesting frame to reconsider central notions in the theory of amenability with relation to groups, groupoids and C$^*$-algebras (see, e.g., the pioneering works \cite{D57,R80,K84} 
    as well as the recent survey by Lawson in \cite{L19} and references therein). In this article we address von Neumann's original dichotomy - amenable versus paradoxical - in the context of semigroups and semigroup rings, 
    and connect this analysis to C$^*$-algebraic structures associated to inverse semigroups. In particular, we define a C$^*$-algebra for an inverse semigroup which generalizes the uniform Roe-algebra of a group, 
    and then study its trace space in relation to the amenability of the original inverse semigroup.

    Amenability of semigroups has been studied since Day's seminal article (see \cite{D57} as well as other classical references \cite{AW67,K77,DN78,N65}). However, the category of semigroups is too broad to obtain 
    classical equivalences like that between amenability, existence of F\o lner sequences, absence of paradoxical decompositions or algebraic amenability of the corresponding semigroup ring. This has led to a variety of 
    approaches that modify classical definitions and introduce new notions, such as {\em strong and weak F\o lner conditions} or {\em fair amenability} to mention only a few \cite{D16,GK17,Y87}. Some other recent results 
    exploring (geo)metrical aspects of discrete semigroups are presented in \cite{F13,GK13}. Furthermore, following the dynamical point of view mentioned above, semigroups are closer to algebras than they are to groups, 
    since the action of an element $s \in S$ on subsets of $S$ can be singular. In the case that $S$ has a zero element, for instance, its action drastically shrinks the size of any subset of $S$ under multiplication. 
    As an illustration of the singular dynamics involved we show in Theorem~\ref{sem_fvspf} that if $S$ has a F\o lner sequence but does {\em not} have a F\o lner sequence exhausting the semigroup 
    (which we call {\em proper F\o lner sequence} in Definition~\ref{def:groups-amenability}~(\ref{item:3})) then $S$ has a finite principal left ideal. 
    This behavior is characteristic of the dynamics given by multiplication in an algebra (cf., \cite[Theorem~3.9]{ALLW18}) and is not present in the context of groups, where one can easily modify a F\o lner sequence 
    of a group to turn it exhausting. 

An alternative approach to understand the dichotomy on a given category is to use operator algebra techniques for a canonical C*-algebra associated to the initial structure. In the special case of groups two important C*-algebras are the reduced group C*-algebra, denoted by $C_r^*\left(G\right)$, and the uniform Roe algebra of a group, which we denote by $\mathcal{R}_G=\ell^\infty(G)\rtimes_r G$, 
    where $G$ acts on $\ell^\infty(G)$ by left translation. Among other things, R\o rdam and Sierakowski establish in \cite{RS12} a relation between paradoxical decompositions of $G$ and properly 
    infinite projections in $\mathcal{R}_G$. Nevertheless, it is not obvious how to associate a C*-algebra to a general semigroup, since the naive approach would be to define the generators of a possible C*-algebra 
    via $V_s\delta_t:=\delta_{st}$ on the Hilbert space $\ell^2(S)$. This, in general, gives unbounded operators due to the singular dynamics involved. Therefore, when we connect our analysis to C*-algebras we will 
    restrict to the class of inverse semigroups, where the dynamics induced by left multiplication are only locally injective, i.e., injective on the corresponding domains. 
    Some general references for inverse semigroups and, also, in relation to C*-algebras are \cite{Au15,F13,H15,L98,M10,P00,P12,S16,V53}. In Theorem~3.19 of \cite{KLLR16}, Kudryavtseva, Lawson, 
    Lenz and Resende prove a Tarski's type alternative where the invariant measure and the paradoxical decomposition restricts to the space of projections $E(S)$ of the inverse semigroup. 
    In the context of groupoids, B\"onicke and Li (see \cite{BL19}) and Rainone and Sims (see \cite{RS19}) establish a sufficient condition on an \'etale groupoid that ensures pure infiniteness of the reduced 
    groupoid C*-algebra in terms of paradoxicality of compact open subsets of the unit space. See also \cite{AR00,ES17,E08} for additional references on the relation between inverse semigroups, C*-algebras and groupoids.

    Recall that an inverse semigroup $S$ is a semigroup such that for every $s \in S$ there is a unique $s^* \in S$ satisfying $s s^* s = s$ and $s^* s s^* = s^*$. 
    We will assume that our semigroups are unital, discrete and countable. In Proposition~\ref{invsem_ame_nch} we will characterize invariant measures in the sense of Day, i.e., 
    finitely additive probability measures satisfying $\mu(s^{-1}A)=\mu(A)$, $s\in S$, $A\subset S$ (where $s^{-1}A$ denotes the preimage of $A$ by $s$), by means of the two following conditions:
    \begin{enumerate}
      \item[(a)] {\it Localization:} $\mu\left(A\right) = \mu\left(A \cap s^*s A\right)$, for any $s\in S$, $A\subset S$.
      \item[(b)] {\it Domain-measurability:} $\mu\left(s^*s A\right) = \mu\left(sA\right)$, for any $s\in S$, $A\subset S$.
    \end{enumerate}
    Fixing a representation $\alpha\colon S\to\mathcal{I}(X) $ of $S$ in terms of partial bijections on some discrete set $X$ one can consider a 
    natural *-representation $V \colon S\to \mathcal{B}(\ell^2(X))$. Define $\mathcal{R}_{X, alg}$ as the *-algebra generated by the family of partial isometries
    $\{V_s\mid s\in S\}$ and $\ell^\infty(X)$. The C*-algebra $\mathcal{R}_X$ is the norm closure of $\mathcal{R}_{X, alg}$. In particular, taking the left regular representation
    $\iota \colon S\to \mathcal I(S)$ we obtain a Roe algebra $\mathcal R_S$, which is a natural
    generalization of the uniform Roe algebra $\mathcal{R}_G$ of a discrete group. Recall that uniform Roe algebras associated to general discrete metric
    spaces are an important class of C*-algebras that naturally encode properties of the metric space, such as amenability, property (A) or lower
    dimensional aspects (see, e.g., \cite[Theorem~4.9]{ALLW18R} or \cite[Theorem~2.2]{LW18}). We will use this strategy to characterize in different ways
    amenability aspects of the inverse semigroup. In this context one can define notions like $S$-domain F\o lner condition and $S$-paradoxical
    decomposition which correspond, in essence, to the usual notions but restricted to the corresponding domains given by the representation $\alpha$. In
    this way, one of the main results in this article is 
    \begin{thmintro}[cf., Theorem~\ref{invsem_th_ame1}]
    \label{thmintro}
      Let $S$ be a countable and discrete inverse semigroup with identity $1 \in S$, and let $\alpha \colon S\to \mathcal I (X)$ be a representation of $S$ on $X$. Then the following conditions are equivalent:
      \begin{enumerate}
        \item  $X$ is $S$-domain measurable.
        \item  $X$ is not $S$-paradoxical.
        \item  $X$ is $S$-domain F\o lner.
        \item  $\mathcal{R}_{X, alg}$ is algebraically amenable.
        \item  $\mathcal{R}_X$ has an amenable trace.
        \item  $\mathcal{R}_X$ is not properly infinite.
        \item  $\left[0\right] \neq \left[1\right]$ in the K$_0$-group of $\mathcal{R}_X$.
      \end{enumerate}
    \end{thmintro}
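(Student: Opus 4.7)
The plan is to establish the equivalence via the cycle $(1) \Leftrightarrow (2) \Leftrightarrow (3) \Rightarrow (4) \Rightarrow (5) \Rightarrow (6)$, together with the direct implications $(5) \Rightarrow (7)$, $(7) \Rightarrow (6)$ and the closing step $(6) \Rightarrow (2)$. This layout separates three blocks: the measure-theoretic/combinatorial equivalences $(1)$--$(3)$, the transition to the algebra of the space via F\o lner data $(3)$--$(5)$, and the operator-algebraic/K-theoretic block $(5)$--$(7)$, with paradoxicality serving as the bridge that closes the cycle.

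For the block $(1) \Leftrightarrow (2) \Leftrightarrow (3)$, I would adapt the classical Tarski-F\o lner-Day triangle to the inverse semigroup setting, using the characterization of $S$-domain measurability from Proposition~\ref{invsem_ame_nch} in terms of localization and domain-measurability. The implications $(1) \Rightarrow (2)$ and $(3) \Rightarrow (2)$ are immediate from finite additivity, since a paradoxical decomposition would force $\mu(X) \geq 2\mu(X)$ or $|F_n| \geq (2-\varepsilon)|F_n|$. For $(2) \Rightarrow (3)$ I would apply a Hall-type marriage argument to the bipartite graph on $X \sqcup X$ whose edges encode the moves admissible by elements of $\alpha(S)$ restricted to their domains, extracting F\o lner sets precisely when no paradoxical matching can be constructed. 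For $(3) \Rightarrow (1)$ the measure $\mu(A) := \lim_\omega |A \cap F_n|/|F_n|$ along a free ultrafilter is shown to satisfy both localization and domain-measurability by exploiting the domain F\o lner condition on the sequence $(F_n)$.

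For the transition, $(3) \Rightarrow (4)$ is obtained by taking the subspaces $W_n \subset \mathcal{R}_{X,alg}$ spanned by elementary matrix units supported on $F_n \times F_n$: both $V_s$ and $f \in \ell^\infty(X)$ fail to preserve $W_n$ only on the domain boundary $F_n \setminus s^*sF_n$, converting the domain F\o lner condition into algebraic amenability. Then $(4) \Rightarrow (5)$ follows a Day-Namioka construction producing an amenable tracial state on $\mathcal{R}_X$ as a weak-$*$ cluster point of the normalized traces $d_n^{-1}\mathrm{Tr}(P_n\,\cdot\,P_n)$, where $P_n$ projects onto $W_n$ in $\ell^2(X)$; the F\o lner property promotes this state to a hypertrace on $\mathcal{B}(\ell^2(X))$, yielding amenability. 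The implication $(5) \Rightarrow (6)$ is immediate, since a tracial state precludes $1 \sim 1 \oplus 1$ inside $\mathcal{R}_X$. Likewise, $(5) \Rightarrow (7)$ holds because any tracial state induces a positive group homomorphism $K_0(\mathcal{R}_X) \to \mathbb{R}$ that separates $[1]$ from $[0]$.

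It remains to close the cycle. The implication $(7) \Rightarrow (6)$ is the easy K-theoretic contrapositive: if $1$ is properly infinite, the relation $1 \oplus 1 \preceq 1$ forces $[1] + [1] = [1]$, hence $[1] = [0]$. The key closing step is $(6) \Rightarrow (2)$, which I would prove by contrapositive: from an $S$-paradoxical decomposition $X = \bigsqcup Y_i = \bigsqcup Z_j$ realized by elements $s_i, r_j \in S$ and their associated partial bijections, one assembles the partial isometries $V_{s_i}, V_{r_j}$ into isometries $v, w \in \mathcal{R}_X$ with $v^*v = w^*w = 1$ and $vv^* + ww^* \leq 1$, witnessing proper infiniteness of $1$. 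I expect the main obstacle to be the adaptation of the Hall-marriage and Namioka-style averaging arguments in $(2) \Rightarrow (3)$ and $(3) \Rightarrow (1)$ to the partial-bijection setting, where the singular dynamics on domains forces careful bookkeeping of which elements of $S$ may act on which subsets of $X$, together with the verification that the paradoxical construction in $(6) \Rightarrow (2)$ indeed assembles the $V_{s_i}, V_{r_j}$ into isometries inside the Roe-type algebra $\mathcal{R}_X$ rather than only in $\mathcal{B}(\ell^2(X))$.
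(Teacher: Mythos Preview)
Your overall architecture matches the paper's closely, and the implications $(5)\Rightarrow(6)$, $(5)\Rightarrow(7)$, $(6)\Rightarrow(2)$, $(7)\Rightarrow(6)$ are handled just as in the text. Two points deserve comment.

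First, there is a genuine gap in your step $(3)\Rightarrow(4)$. You take $W_n$ to be spanned by the matrix units $M_{u,v}$ with $u,v\in F_n$, asserting $W_n\subset\mathcal{R}_{X,alg}$. But $M_{u,v}$ lies in $\mathcal{R}_{X,alg}$ only when $u$ and $v$ are in the same orbit of the $S$-action, i.e., when there is some $s\in S$ with $u\in D_{s^*s}$ and $su=v$. An arbitrary domain F\o lner set $F_n$ may meet several orbits, and then $\dim W_n<|F_n|^2$, which destroys the dimension count you need. The paper repairs this by first proving (Lemma~\ref{invsem_lemma_fequiv}) that any domain F\o lner set can be refined to one lying in a single $\approx$-class, so that $P_{F_n}\mathcal{R}_X P_{F_n}$ really is the full matrix algebra of dimension $|F_n|^2$ (Lemma~\ref{invsem_lemma_dimw}). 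Without this refinement your argument does not go through. Incidentally, the relevant boundary for $V_s$ is $s(F_n\cap D_{s^*s})\setminus F_n$, not $F_n\setminus s^*sF_n$ as you write.

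Second, your route through the block $(1)\Leftrightarrow(2)\Leftrightarrow(3)$ differs from the paper's. You propose a direct Hall-marriage argument for $(2)\Rightarrow(3)$; the paper instead builds a type semigroup $\mathrm{Typ}(\alpha)$ for the representation, invokes Tarski's theorem to get $(2)\Rightarrow(1)$, and then derives $(1)\Rightarrow(3)$ by a Namioka-type $\ell^1$-approximation argument (Lemmas~\ref{invsem_lemma_normapp}--\ref{invsem_lemma_schi}). The type-semigroup route has the advantage that the cancellation and order properties needed are isolated cleanly (Lemma~\ref{invsem_lemma_type}) and reused elsewhere, while your Hall-type approach, if made precise, would be more self-contained but requires careful bookkeeping of domains in the bipartite graph---you should expect this to be where the real work lies. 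Note also that Proposition~\ref{invsem_ame_nch} characterizes \emph{invariant} measures (amenability), not domain measures; it is not the right reference for condition~(1) here.
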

    Note that this characterization involves the notions corresponding to domain-measurability (see (b) above). We also characterize the full force of amenability of the action
    in Theorem \ref{invsem_th_ame2}, obtaining in particular that it is equivalent to the fact that no projection associated to an idempotent of $S$ is properly infinite in $\mathcal R _X$ (compare with Theorem \ref{thmintro}(6)).
       
    An important step in the proof of the previous theorem is the construction and analysis of a type semigroup $\mathrm{Typ}(\alpha)$ (see Definition~\ref{def:reptypesemigroup}) associated to the representation $\alpha$. 
    Recall that type semigroups have been considered recently in many interesting situations (see, e.g.,~\cite{AE14,R15}).
    
    Moreover, we also show in this section that every tracial state on $\mathcal{R}_X$ is amenable.
    \begin{thmintro}[cf., Theorem~\ref{invsem_thtr}]
      Let $S$ be a countable and discrete inverse semigroup with identity $1 \in S$, and let $\alpha \colon S\to \mathcal I (X)$ be  a representation. A positive linear functional on $\mathcal{R}_X$ is a trace if and only if it is an amenable trace.
    \end{thmintro}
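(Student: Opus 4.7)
The plan is to construct an explicit extension of $\tau$ to an $\mathcal{R}_X$-central state on $B(\ell^2(X))$, which by the standard hypertrace characterization witnesses amenability. I first restrict $\tau$ to the abelian subalgebra $\ell^\infty(X)\subseteq\mathcal{R}_X$ to obtain a state $\mu$, identified with a finitely additive probability measure on $X$. Let $E\colon B(\ell^2(X))\to\ell^\infty(X)$ denote the canonical diagonal conditional expectation $E(T)(x)=\langle T\delta_x,\delta_x\rangle$; my candidate extension is $\phi:=\mu\circ E$. It suffices to show (i) that $\phi$ agrees with $\tau$ on all of $\mathcal{R}_X$, and (ii) that $\phi$ is $\mathcal{R}_X$-central on $B(\ell^2(X))$. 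Centrality over the generators $\ell^\infty(X)$ and $\{V_s\}_{s\in S}$ propagates to the generated $*$-algebra and, by boundedness of $\phi$, to $\mathcal{R}_X$.

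The first ingredient is the $S$-invariance of $\mu$, which drops out at once from the trace identity $\tau(V_s\chi_BV_s^*)=\tau(\chi_BV_s^*V_s)$: since $V_s\chi_BV_s^*=\chi_{\alpha_s(B\cap\operatorname{dom}(s))}$ and $V_s^*V_s=\chi_{\operatorname{dom}(s)}$, this reads $\mu(\alpha_s(B\cap\operatorname{dom}(s)))=\mu(B\cap\operatorname{dom}(s))$, i.e.\ the domain-measurability of Proposition~\ref{invsem_ame_nch}. The heart of (i) is then to verify $\tau(T)=\mu(E(T))$ on the dense $*$-subalgebra $\mathcal{R}_{X, alg}$, hence by linearity on monomials $fV_s$ with $f\in\ell^\infty(X)$. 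I split $V_s=\chi_{\operatorname{Fix}(\alpha_s)}+\widetilde V_s$ with $\widetilde V_s:=V_s(1-\chi_{\operatorname{Fix}(\alpha_s)})$; the identity $V_s\chi_{\operatorname{Fix}(\alpha_s)}=\chi_{\operatorname{Fix}(\alpha_s)}$ (since $V_s$ acts as the identity on its fixed points) makes the diagonal piece contribute $\tau(f\chi_{\operatorname{Fix}(\alpha_s)})=\mu(f\chi_{\operatorname{Fix}(\alpha_s)})=\mu(E(fV_s))$. To kill the off-diagonal contribution I invoke a $3$-coloring argument: the directed graph on $X$ with edges $x\to\alpha_s(x)$ for $x\in\operatorname{dom}(s)\setminus\operatorname{Fix}(\alpha_s)$ has in- and out-degree at most one and no loops, so its components are paths and cycles of length $\geq 2$, which admit a proper $3$-coloring $X=A_1\sqcup A_2\sqcup A_3$. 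Setting $p_i:=\chi_{A_i}$ I obtain $\widetilde V_s=\sum_{i\neq j}p_j\widetilde V_sp_i$, whence
\[
\tau(fp_j\widetilde V_sp_i)=\tau(\widetilde V_sp_ifp_j)=\tau(\widetilde V_sf(p_ip_j))=0 \qquad (i\neq j),
\]
using cyclic invariance and commutativity of $\ell^\infty(X)$. Summing gives $\tau(f\widetilde V_s)=0$, so $\tau(fV_s)=\phi(fV_s)$ as required.

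For (ii), centrality of $\phi$ over $\ell^\infty(X)$ is immediate because $E$ is an $\ell^\infty(X)$-bimodule map and $\ell^\infty(X)$ is abelian. For $V_s$ and arbitrary $T\in B(\ell^2(X))$, a direct matrix computation shows that $E(V_sT)$ and $E(TV_s)$ are supported respectively on $\operatorname{range}(\alpha_s)$ and $\operatorname{dom}(s)$ and satisfy $E(V_sT)\circ\alpha_s=E(TV_s)$ on $\operatorname{dom}(s)$; the $S$-invariance of $\mu$ established above then forces $\mu(E(V_sT))=\mu(E(TV_s))$, i.e.\ $\phi(V_sT)=\phi(TV_s)$. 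I expect the main obstacle to lie in the $3$-coloring step, where the presence of odd-length finite cycles prevents a simpler $2$-coloring and one must handle partial bijections whose orbits mix several types; conceptually it is the mechanism by which the tracial identity on $\mathcal{R}_X$ is forced to factor through the diagonal expectation $E$, after which the extension-plus-centrality construction is essentially mechanical.
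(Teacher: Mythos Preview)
Your proof is correct and follows essentially the same route as the paper: the paper's Lemma~\ref{invsem_lemma_trcond} uses exactly your fixed-point/3-coloring decomposition to show that every trace on $\mathcal{R}_X$ factors through the diagonal expectation $\mathcal{E}$, and the hypertrace is then constructed as $\mu\circ\mathcal{E}$ with centrality verified via the identity $s\cdot\mathcal{E}(TV_s)=\mathcal{E}(V_sT)$ together with the domain-measurability of $\mu$, just as you do. The only cosmetic difference is that the paper works with generators $V_sP_A$ and colors the set $A$, whereas you use $fV_s$ and color all of $X$.
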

    Given the representation $V \colon S \rightarrow \mathcal{B}\left(\ell^2\left(X\right)\right)$ introduced above, one can also consider the \textit{reduced semigroup C$^*$-algebra}, that is, the C$^*$-algebra $C^*_r\left(X\right)$ generated by $\left\{V_s\right\}_{s \in S}$. In particular we have the following inclusions:
    \begin{equation}
      C_r^*\left(X\right) := C^*\left(\left\{V_s \mid s \in S\right\}\right) \subset \mathcal{R}_X := C^*\left(\left\{V_s \mid s \in S\right\} \cup \ell^{\infty}\left(X\right)\right) \subset \mathcal{B}\left(\ell^2\left(X\right)\right). \nonumber
    \end{equation}
    Lastly, using the theorems above we also prove a generalization to a result by Rosenberg (cf.,~\cite{H87}) in the setting of inverse semigroups.
    \begin{thmintro}[cf., Theorem~\ref{invsem_thm_rosenberg}]
      Let $S$ be a countable and discrete inverse semigroup with identity $1 \in S$ and with a minimal projection. Let $\alpha \colon S\to \mathcal I (X)$ be a representation on some set $X$ and suppose $C^*_r(X)$ is quasidiagonal. Then $X$ is $S$-amenable.
    \end{thmintro}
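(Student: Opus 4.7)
The plan is to adapt Rosenberg's classical strategy --- deducing amenability of a group from quasidiagonality of its reduced $C^*$-algebra --- to the inverse-semigroup setting, using Theorem~\ref{invsem_th_ame1} and Theorem~\ref{invsem_th_ame2} to translate operator-algebraic information into $S$-amenability of $X$, and invoking the minimal projection of $S$ to bridge the gap both between $C_r^*(X)$ and $\mathcal{R}_X$ and between domain measurability and full $S$-amenability.

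The first step is a standard Brown-style construction. Given a quasidiagonal sequence $\phi_n \colon C_r^*(X) \to M_{k_n}(\C)$ of unital completely positive maps that is asymptotically multiplicative and asymptotically isometric, the weak-$*$ limit $\tau(a) := \lim_{n \to \omega} \mathrm{tr}_{k_n}(\phi_n(a))$ along a free ultrafilter $\omega$ defines a tracial state on $C_r^*(X)$ whose amenability is certified by the factorizations $\phi_n$ themselves.

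The crux is to promote $\tau$ to an amenable tracial state $\widetilde\tau$ on $\mathcal{R}_X$, and this is where the minimal projection $e \in E(S)$ is used for the first time. Because $e$ is minimal in the semilattice $E(S)$, the corner $V_e \mathcal{R}_X V_e$ admits a concrete description in terms of the stabilizer group $G_e := \{s \in S : s^*s = ss^* = e\}$, with $V_e C_r^*(X) V_e \cong C_r^*(G_e)$ and a compatible picture of the full corner $V_e \mathcal{R}_X V_e$. Combining the restriction of $\tau$ to this corner with the canonical faithful conditional expectation $\mathcal{R}_X \to \ell^\infty(X)$ onto the diagonal assembles an amenable tracial state $\widetilde\tau$ on $\mathcal{R}_X$, and Theorem~\ref{invsem_th_ame1} in the direction $(5) \Rightarrow (1)$ then immediately gives that $X$ is $S$-domain measurable.

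The final step, which I expect to be the main obstacle, is to upgrade $S$-domain measurability to genuine $S$-amenability. By Theorem~\ref{invsem_th_ame2}, this amounts to checking that no projection $V_f$ for $f \in E(S)$ is properly infinite in $\mathcal{R}_X$. The minimal projection is invoked a second time here: if some $V_f$ were properly infinite, a comparison argument through the semilattice $E(S)$ would force $V_e$ itself to carry infinitely many mutually orthogonal copies of itself, contradicting the strictly positive value $\widetilde\tau(V_e) > 0$ obtained in Step~2. Making this propagation from the minimal idempotent to the whole semilattice $E(S)$ rigorous --- in particular, tying the order on $E(S)$ to proper infiniteness of the associated projections in $\mathcal{R}_X$ --- is precisely where the minimal-projection hypothesis becomes indispensable, consistent with the remark in the introduction that the converse Rosenberg-type implication fails in general.
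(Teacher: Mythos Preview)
Your outline has a genuine gap at the point you yourself flag as the ``main obstacle,'' but the gap sits earlier than you think. You assert in Step~3 that ``$\widetilde\tau(V_e) > 0$ [was] obtained in Step~2,'' yet nothing in Step~2 guarantees this. The amenable trace $\tau$ you build as a cluster point of $\mathrm{tr}_{k_n}\circ\phi_n$ may well satisfy $\tau(V_e)=0$; quasidiagonality gives you \emph{some} amenable trace, not one that sees the minimal idempotent. Your corner description $V_e C_r^*(X) V_e \cong C_r^*(G_e)$ is neither proved nor obviously relevant, and ``combining the restriction of $\tau$ to this corner with the conditional expectation'' does not produce a trace normalized at $V_e$ unless the restriction is already nonzero there --- which is exactly what is at stake.

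The paper's proof confronts this head-on and is quite different from your sketch. It never tries to extend the trace to $\mathcal{R}_X$ via corner analysis. Instead it \emph{modifies the quasidiagonal approximation itself}: since $e_0$ is minimal it is central (Lemma~\ref{invsem_lemma_mincomm}), so $\varphi_n(V_{e_0})$ is an approximate projection with spectrum clustering near $\{0,1\}$; letting $Q_n$ project onto the large-eigenvalue eigenspace and amplifying via
\[
\phi_n(A) := \varphi_n(A) \oplus \bigl(Q_n\varphi_n(A)Q_n^* \otimes 1_{m_n}\bigr)
\]
with $m_n$ chosen large enough forces $\mathrm{tr}(\phi_n(V_{e_0}))\to 1$ while preserving asymptotic multiplicativity (this uses centrality of $V_{e_0}$). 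Any cluster point is then an amenable trace with $\tau(V_{e_0})=1$, and the associated domain measure is localized at $D_{e_0}$; Proposition~\ref{pro:invsem_cor_min} finishes the job directly, with no need for your semilattice comparison argument. The minimal projection is used once, for centrality, not twice as in your plan.
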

    
    The structure of the article is as follows. In Section~\ref{sec_pre} we recall different results around the notion of amenability in the context of groups and algebras that partly motivate our analysis. In particular, we introduce the notion of uniform Roe algebra $\mathcal{R}_G$ of a group $G$ and mention in Theorem~\ref{th_ame_chr} a variety of ways in which one may characterize the amenability of $G$ via $\mathcal{R}_G$. In Section~\ref{sec_sem} we focus first on amenability and F\o lner sequences for general semigroups and semigroup rings. We give an example of a semigroup $S$ which has an algebraically amenable group ring $\C S$, but no F\o lner sequence (see Example~\ref{exa:counter}).

    In the final two sections we restrict our analysis to the case of inverse semigroups. In Section~\ref{sec_invsem} we focus on the algebraic (read as non-C*) aspects of amenability in inverse semigroups. In particular we split Day's invariance condition for measures over amenable inverse semigroups $S$ into the two notions (a) and (b) above, and introduce the type semigroup construction. In Section~\ref{sec_invsem_ctr} we present the C*-aspects of our analysis. For example, we introduce the algebra $\mathcal{R}_X$ and prove that all its traces factor through $\ell^{\infty}\left(X\right)$ via a canonical conditional expectation. We finish the article studying the relation between the quasidiagonality of $C_r^*\left(X\right)$ and the $S$-amenability of $X$. We also mention some questions in relation to this problem.

    {\bf Conventions:} We denote by $A \sqcup B$ the disjoint union of two sets $A$ and $B$. Unless otherwise specified, any measure $\mu$ on a set $X$ will be a \textit{finitely additive probability measure}, i.e., 
    $\mu \colon \mathcal{P}(X) \rightarrow \left[0, 1\right]$, where $\mathcal{P}(X)$ is the power set of $X$, satisfies $\mu(X) = 1$ and $\mu\left(A \sqcup B\right) = \mu\left(A\right) + \mu\left(B\right)$, 
    for every $A, B \subset X$. All semigroups $S$ considered will be countable, discrete and with unit $1 \in S$. A representation of an inverse semigroup $S$ on a set $X$ is a unital semigroup 
    homomorphism $\alpha\colon S\to \mathcal{I}(X)$, where $\mathcal{I}(X)$ denotes the inverse semigroup of partial bijections of $X$. We will denote by $\mathcal{B}(\mathcal{H})$ the algebra of bounded 
    linear operators on a complex separable Hilbert space $\mathcal{H}$.
    
    \textbf{Acknowledgements:} We thank an anonymous referee for his helpful remarks on a previous version of the manuscript.

  \section{Groups and Uniform Roe Algebras}\label{sec_pre}
    This section aims to give a brief summary to some aspects of amenability needed later. We begin with classical notions in the context of groups and relate these with C*-algebraic concepts using the uniform Roe algebra of a group.
    \begin{definition}\label{def:groups-amenability}
      Let $G$ be a countable group and $\mathcal{A}$ a unital $\mathbb{C}$-algebra of countable dimension.
      \begin{enumerate}
        \item $G$ is (left) \textit{amenable} if there exists a (left) invariant measure on $G$, i.e., a finitely additive probability measure $\mu \colon \mathcal{P}(G) \rightarrow \left[0, 1\right]$ such that $\mu\left(g^{-1} A\right) = \mu\left(A\right)$ for all $g \in G$, $A \subset G$.
        \item $G$ satisfies the \textit{F\o lner condition} if for every $\varepsilon > 0$ and finite $\mathcal{F} \subset G$, there is a finite non-empty $F \subset G$ such that $\left|g F \cup F\right| \leq \left(1 + \varepsilon\right) \left|F\right|$, for every $g \in \mathcal{F}$.
        \item\label{item:3} $G$ satisfies the \textit{proper} F\o lner condition if, in addition, the finite set $F$ can be taken to contain any other set $A \subset G$, i.e., for every $\varepsilon > 0, \mathcal{F} \subset G$ and finite $A \subset G$ there is a finite non-empty $F \subset G$ as above such that $A \subset F$.
        \item $G$ is \textit{paradoxical} if there are sets $A_i, B_j \subset G$ and elements $a_i, b_j \in G$ such that
          \begin{align}
            G & = a_1 A_1 \sqcup \dots \sqcup a_n A_n = b_1 B_1 \sqcup \dots \sqcup b_m B_m \nonumber \\
            & \supset A_1 \sqcup \dots \sqcup A_n \sqcup B_1 \sqcup \dots \sqcup B_m. \nonumber
          \end{align}
        \item\label{item:5} $\mathcal{A}$ is \textit{algebraically amenable} if for every $\varepsilon > 0$ and finite $\mathcal{F} \subset \mathcal{A}$ there is a non-zero finite dimensional subspace $W \leq \mathcal{A}$ such that $\text{dim}\left(A W + W\right) \leq \left(1 + \varepsilon\right) \text{dim}\left(W\right)$ for every $A \in \mathcal{F}$.
      \end{enumerate}
    \end{definition}
    
    Along the lines of these notions, but in the C$^*$-scenario, we can define when a C$^*$-algebra $\mathcal{A}$ captures some aspects of \textit{amenability}, the \textit{F\o lner condition} or \textit{paradoxicality}. For additional motivations and results see, e.g.,  \cite{C76,C76vn,B95,B97,AL14} and references therein.
    \begin{definition}\label{def:F-alg}
      Let $\mathcal{A} \subset \mathcal{B}\left(\mathcal{H}\right)$ be a unital C$^*$-algebra of bounded linear operators on a complex separable Hilbert space $\mathcal{H}$. A state on $\mathcal{A}$ is a positive and linear functional on $\mathcal{A}$ with norm one.
      \begin{enumerate}
       \item\label{item:1} A state $\tau$ on $\mathcal{A}$ is called an \textit{amenable trace} 
        if there is a state $\phi$ on $\mathcal{B}(\mathcal{H})$ extending $\tau$, i.e., $\phi|_\mathcal{A}=\tau$, and satisfying
        \[ 
           \phi\left(AT\right) = \phi\left(TA\right)\;,\quad T \in \mathcal{B}\left(\mathcal{H}\right)\;,\;A \in \mathcal{A}\;.
        \]
        The state $\phi$ is called a \textit{hypertrace} for $\mathcal{A}$. The concrete C*-algebra $\mathcal{A}$ is a called a \textit{F\o lner C*-algebra} if it has an amenable trace.
       \item\label{F-alg-2} $\mathcal{A}$ satisfies the \textit{F\o lner condition} if for every $\varepsilon > 0$ and every finite $\mathcal{F} \subset \mathcal{A}$ there is a non-zero finite rank orthogonal projection $P\in\mathcal{B}(\mathcal{H})$ such that $\left|\left|PA - AP\right|\right|_2 \leq \varepsilon \left|\left|P\right|\right|_2$ for every $A \in \mathcal{F}$, where $\|\cdot\|_2$ denotes the Hilbert-Schmidt norm.
       \item A projection $P \in \mathcal{A}$ is \textit{properly infinite} if there are $V, W \in \mathcal{A}$ such that $P = V^*V = W^*W \geq VV^* + WW^*$. Note that, in this case, the range projections $VV^*$ and $WW^*$ are orthogonal.
       The algebra $\mathcal{A}$ is called \textit{properly infinite} when $1 \in \mathcal{A}$ is properly infinite.
      \end{enumerate}
    \end{definition}

    \begin{remark}\label{rem:foelner}
      The class of F\o lner C$^*$-algebras is also known in the literature as \textit{weakly hypertracial} C$^*$-algebras (cf., \cite{B95}). In \cite[Section~4]{AL14} the first and second authors gave an abstract (i.e., representation independent) characterization of this class of algebras in terms of a net of unital completely positive (u.c.p.) maps into matrices which are asymptotically multiplicative in a weaker norm than the operator norm (see also \cite[Theorem~3.8]{ALLW18R}). It can be shown that an abstract C$^*$-algebra is a \textit{F\o lner C$^*$-algebra} if there exists a non-zero representation $\pi \colon \mathcal{A} \rightarrow \mathcal{B}\left(\mathcal{H}\right)$ such that $\pi(\mathcal{A})$ has an amenable trace (cf., \cite[Theorem~4.3]{AL14}). In general, quasidiagonality is a stronger notion than F\o lner (see, e.g., the examples given in the context of general uniform Roe algebras over metric spaces in \cite[Remark~4.14]{ALLW18R}). However, if $\mathcal{A}$ is a unital nuclear C$^*$-algebra, then it is a F\o lner C$^*$-algebra if and only if $\mathcal{A}$ admits a tracial state (see \cite[Proposition~6.3.4]{BO08}). Note this fact implies that every stably finite unital nuclear C$^*$-algebra is in the F\o lner class.
    \end{remark}

    A classical construction relating C$^*$-algebras and groups is given via the so-called \textit{left regular representation}: the unitary representation $\lambda \colon G \rightarrow \mathcal{B}\left(\ell^2(G)\right)$ defined by $\left(\lambda_g(f)\right)(h) := f\left(g^{-1}h\right)$. The \textit{uniform Roe algebra} $\mathcal{R}_G$ of $G$ is the C$^*$-algebra generated by $\left\{\lambda_g\right\}_{g \in G}$ and $\ell^{\infty}(G)$ viewed as multiplication operators in $\ell^2(G)$, that is,
    \begin{equation}
      \mathcal{R}_G := \text{C}^*\Big(\left\{\lambda_g \mid g \in G\right\} \cup \ell^{\infty}(G) \Big) \subset \mathcal{B}\left(\ell^2(G)\right). \nonumber
    \end{equation}
    The following result shows how one can characterize amenability and paradoxicality of the group in terms of C$^*$-properties of the algebra $\mathcal{R}_G$.
    \begin{theorem}\label{th_ame_chr}
      Let $G$ be a countable and discrete group. The following are equivalent:
      \begin{enumerate}
        \item $G$ is amenable. \label{chr:1}
        \item $G$ is not paradoxical.\label{chr:2}
        \item $G$ has a F\o lner sequence.\label{chr:3}
        \item $\mathbb{C} G$ is algebraically amenable.\label{chr:4}
        \item $\mathcal{R}_G$ has an amenable trace (and hence is a F\o lner C$^*$-algebra).\label{chr:5}
        \item $\mathcal{R}_G$ is not properly infinite.\label{chr:6}
        \item $[0]\not=[1]$ in the $K_0$-group of $\mathcal{R}_G$.\label{chr:7}
      \end{enumerate}
    \end{theorem}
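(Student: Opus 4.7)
The plan is to prove Theorem~\ref{th_ame_chr} by combining classical characterizations of group amenability with a dictionary between combinatorial properties of $G$ and operator-algebraic properties of $\mathcal{R}_G$. The equivalences (\ref{chr:1})$\Leftrightarrow$(\ref{chr:2})$\Leftrightarrow$(\ref{chr:3}) are the Tarski alternative together with F\o lner's theorem: $G$ is amenable if and only if it admits no paradoxical decomposition, if and only if it has a F\o lner sequence. The equivalence (\ref{chr:1})$\Leftrightarrow$(\ref{chr:4}) specializes the Elek/Bartholdi type theorems for group algebras: an invariant mean on $G$ yields a dimension-like function on finitely generated $\mathbb{C}G$-modules, while an algebraic F\o lner sequence of finite-dimensional subspaces of $\mathbb{C}G$ can be projected, using the natural basis $\{\delta_g\mid g\in G\}$, onto a combinatorial F\o lner sequence in $G$.

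For the C*-algebraic half, I would first establish (\ref{chr:3})$\Rightarrow$(\ref{chr:5}). Given a F\o lner sequence $\{F_n\}$ in $G$, define states $\varphi_n$ on $\mathcal{B}(\ell^2(G))$ by $\varphi_n(T)=\frac{1}{|F_n|}\sum_{g\in F_n}\langle T\delta_g,\delta_g\rangle$ and let $\varphi$ be a weak-* cluster point in the state space. A direct check gives $\varphi_n(fT)=\varphi_n(Tf)$ for every $f\in\ell^\infty(G)$ and $T\in\mathcal{B}(\ell^2(G))$, while the F\o lner condition ensures $\varphi_n(\lambda_g T)-\varphi_n(T\lambda_g)\to 0$ uniformly on norm-bounded sets; together these properties make $\varphi$ a hypertrace for $\mathcal{R}_G$ and hence $\tau:=\varphi|_{\mathcal{R}_G}$ an amenable trace. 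The implication (\ref{chr:5})$\Rightarrow$(\ref{chr:6}) is immediate: if $1=V^*V=W^*W\geq VV^*+WW^*$ in $\mathcal{R}_G$, the trace property of $\tau$ forces $2=\tau(VV^*)+\tau(WW^*)\leq\tau(1)=1$, a contradiction. The equivalence (\ref{chr:6})$\Leftrightarrow$(\ref{chr:7}) is standard $K$-theory, since proper infiniteness of the unit is equivalent to $[1]=2[1]$ and hence to $[1]=[0]$ in $K_0(\mathcal{R}_G)$.

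The main obstacle, and the step that closes the cycle, is (\ref{chr:6})$\Rightarrow$(\ref{chr:2}). Here I would invoke the R\o rdam--Sierakowski correspondence from \cite{RS12} between paradoxical decompositions of $G$ acting on itself by left translation and proper infiniteness of the unit in the reduced crossed product $\ell^\infty(G)\rtimes_r G=\mathcal{R}_G$. In one direction, a paradoxical decomposition yields isometries of the form $V=\sum_i \lambda_{a_i^{-1}}\mathds{1}_{a_iA_i}$ and an analogous $W$ in $\mathcal{R}_G$ whose range projections are mutually orthogonal and dominated by $1$, thereby witnessing proper infiniteness. In the other direction, given such $V,W\in\mathcal{R}_G$, one applies the canonical conditional expectation $E\colon\mathcal{R}_G\to\ell^\infty(G)$ to extract the set-theoretic pieces $A_i, B_j$ and the group elements $a_i, b_j$ realising the paradox. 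Hence $\mathcal{R}_G$ not properly infinite forces $G$ not paradoxical, and the loop closes via (\ref{chr:2})$\Rightarrow$(\ref{chr:1}).
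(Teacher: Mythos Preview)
Your overall architecture matches the paper's: the classical equivalences (\ref{chr:1})$\Leftrightarrow$(\ref{chr:2})$\Leftrightarrow$(\ref{chr:3}) and Bartholdi's (\ref{chr:1})$\Leftrightarrow$(\ref{chr:4}) are cited, and the link to $\mathcal{R}_G$ goes through R\o rdam--Sierakowski just as in the paper. Your direct construction of the hypertrace from a F\o lner sequence for (\ref{chr:3})$\Rightarrow$(\ref{chr:5}) is more explicit than the paper's treatment and is correct; likewise (\ref{chr:5})$\Rightarrow$(\ref{chr:6}) and the contrapositive (\ref{chr:6})$\Rightarrow$(\ref{chr:2}) via the explicit isometries are fine.

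There is, however, a genuine gap in your claimed equivalence (\ref{chr:6})$\Leftrightarrow$(\ref{chr:7}). The implication ``$1$ properly infinite $\Rightarrow [1]=2[1]=[0]$ in $K_0$'' is indeed standard (Cuntz), so (\ref{chr:7})$\Rightarrow$(\ref{chr:6}) holds. But the converse direction you need, namely $[1]=[0]\Rightarrow 1$ properly infinite, is \emph{not} standard $K$-theory and is false for general unital C*-algebras: $[1]=[0]$ only gives $1_{n+1}\sim 1_n$ in some matrix amplification, which makes $1_n$ properly infinite in matrices over $\mathcal{R}_G$ but says nothing about $1$ itself in $\mathcal{R}_G$. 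With only (\ref{chr:7})$\Rightarrow$(\ref{chr:6}) in hand, your cycle never reaches (\ref{chr:7}) from the other conditions.

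The fix is the one the paper uses (see also the proof of Theorem~\ref{invsem_th_ame1}): replace (\ref{chr:6})$\Rightarrow$(\ref{chr:7}) by (\ref{chr:5})$\Rightarrow$(\ref{chr:7}). An amenable trace $\tau$ induces a group homomorphism $\tau_0\colon K_0(\mathcal{R}_G)\to\mathbb{R}$ with $\tau_0([1])=1\neq 0=\tau_0([0])$, so $[1]\neq[0]$. Then (\ref{chr:7}) sits between (\ref{chr:5}) and (\ref{chr:6}) in the chain $(\ref{chr:3})\Rightarrow(\ref{chr:5})\Rightarrow(\ref{chr:7})\Rightarrow(\ref{chr:6})\Rightarrow(\ref{chr:2})$, and the loop closes. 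Incidentally, your aside about extracting a paradoxical decomposition from arbitrary isometries $V,W$ via the conditional expectation is the hard direction of \cite{RS12} and would need substantially more work, but since your cycle does not require it you can simply drop that remark.
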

    \begin{proof}
      The equivalences (\ref{chr:1}) $\Leftrightarrow$ (\ref{chr:2}) $\Leftrightarrow$ (\ref{chr:3}) are classical (see, e.g., \cite{P12,KL17}). Their equivalence to (\ref{chr:4}) is due to Bartholdi \cite{B08}. To show the equivalences (\ref{chr:1})$\Leftrightarrow$(\ref{chr:5})$\Leftrightarrow$(\ref{chr:6}) recall that the uniform Roe algebra of the group $G$ can be also seen as a reduced crossed product, i.e., $\mathcal{R}_G=\ell^\infty(G)\rtimes_r G$, where the action of $G$ on $\ell^\infty(G)$ is given by left translation of the argument (see, e.g., \cite[Proposition~5.1.3]{BO08}). R\o rdam and Sierakowski show in \cite[Proposition~5.5]{RS12} a direct equivalence between paradoxicality of $G$ and proper infiniteness for this class of crossed products. In fact, they show that $E \subset G$ is paradoxical if and only if the characteristic function $P_E$ is properly infinite in $\ell^\infty(G)\rtimes_r G$ (see also~\cite[Theorem~4.9]{ALLW18R}). 
      Finally, using the reasoning in \cite[Theorem~4.6]{ALLW18R} one can also prove (\ref{chr:1}) $\Leftrightarrow$ (\ref{chr:7}).
    \end{proof}

    \begin{remark}
      For a general study of the relation between F\o lner C$^*$-algebras and crossed products see also \cite{B95,B97}. Moreover, note that the C$^*$-algebra $\mathcal{R}_G$ contains the reduced group C$^*$-algebra 
      \begin{equation}
        \text{C}^*_r(G)=\text{C}^*\left(\left\{\lambda_g \mid g \in G\right\}\right). \nonumber
      \end{equation}
      However the characterization in terms of proper infiniteness in the preceding theorem would not be true if we replaced the former by the latter. In fact, it is well known (see \cite{CF88}) that the reduced C$^*$-algebra of the free group on two generators $\mathbb{F}_2$ has no non-trivial projection and hence is vacuously not properly infinite. However $\mathbb{F}_2$ is indeed paradoxical. Thus observe that a C$^*$-algebraic characterization of amenability via proper infiniteness requires the existence of non-trivial projections in the C$^*$-algebra, and that in $\mathcal{R}_G$ the existence of nontrivial projections is guaranteed by the characteristic functions in $\ell^\infty(G)$.
    \end{remark}

  \section{Semigroups}\label{sec_sem}
  We begin next our analysis of amenability in the context of semigroups. We will see that, from a dynamical point of view, semigroups are closer to algebras than to groups. At this level of generality it is not possible to define a natural C*-algebra which provides the variety of characterizations given in Theorem~\ref{th_ame_chr}.

  Recall that a \textit{semigroup} is a non-empty set $S$ equipped with an associative binary operation $\left(s, t\right) \mapsto st$. The notions treated in Section~\ref{sec_pre} do have an analogue in the semigroup scenario, which relies on the \textit{preimage} of a set. Given $s \in S$ and $A \subset S$, the \textit{preimage of $A$ by $s$} is defined by
  \begin{equation}
    s^{-1} A := \left\{t \in S \mid st \in A\right\}. \nonumber
  \end{equation}
  The following definition for semigroups is due to Day~\cite{D57} (see also \cite{F55,N65}). Recall that by \textit{probability measure} we mean \textit{finitely additive probability measure}.
  \begin{definition}\label{def:sem}
    Let $S$ be a semigroup.
    \begin{enumerate}
      \item \label{sem_def_ame} $S$ is \textit{(left) amenable} if there exists an (left) invariant measure on $S$, i.e., a probability measure $\mu \colon \mathcal{P}\left(S\right) \rightarrow \left[0, 1\right]$ 
      such that $\mu\left(s^{-1} A\right) = \mu\left(A\right)$ for every $s \in S$, $A \subset S$.
      \item \label{sem_def_foln} $S$ satisfies the \textit{(left) F\o lner condition} if for all $\varepsilon > 0$ and finite $\mathcal{F} \subset S$ there is a finite non-empty $F \subset S$ such that $\left|s F \cup F\right| \leq \left(1 + \varepsilon\right) \left|F\right|$ for every $s \in \mathcal{F}$.
      \item $S$ satisfies the \textit{proper F\o lner condition} if, in addition, the F\o lner set $F$ can be taken to contain any other set $A \subset S$, i.e., for every $\varepsilon > 0$, finite $\mathcal{F} \subset S$ 
      and finite $A \subset S$ there is a finite non-empty $F \subset S$ as in (\ref{sem_def_foln}) 
      that, in addition, satisfies $A \subset F$.
    \end{enumerate}
  \end{definition}
  \begin{remark}
    \begin{enumerate}
      \item For the rest of the text we will just consider left amenability as defined in Definition~\ref{def:sem} and we will omit the prefix \textit{left}.
      \item We mention the F\o lner condition given in (\ref{sem_def_foln}) is equivalent to the existence of a net (a sequence if $S$ is countable) $\left\{F_i\right\}_{i \in I}$ of finite non-empty subsets of $S$ such that $\left|s F_i \setminus F_i\right|/\left|F_i\right| \rightarrow 0$ for all $s \in S$. These conditions will be used indistinctly throughout the text.
    \end{enumerate}
  \end{remark}

  We introduce next a stronger notion than amenability in the context of semigroups.
  \begin{definition}\label{def:Smeas}
    A semigroup $S$ is called \textit{measurable} if there is a probability measure $\mu$ on $S$ such that $\mu\left(sA\right) = \mu\left(A\right)$, $s \in S$, $A \subset S$.
  \end{definition}
  It is a standard result that any measurable semigroup is amenable as well. The reverse implication is false in general, although it holds in some classes of semigroups, e.g., for left cancellative ones (see Sorenson's Ph.D.~thesis~\cite{S67} as well as Klawe~\cite{K77}).
  
  The following proposition justifies why we can assume a semigroup $S$ to be countable and unital, as we will normally do in the following sections. In general, given a possibly non-unital semigroup $S$ we can always consider its unitization $S' := S \sqcup \left\{1\right\}$ and define a multiplication in $S'$ extending that of $S$ so that $1$ behaves as a unit. Moreover, as in the case of groups and algebras, the property of amenability is in essence a countable one, at least for a large class of semigroups (including the inverse). Recall from \cite{GK17} that a semigroup $S$ satisfies the \textit{Klawe condition} whenever $sx = sy$ for $s, x, y \in S$ implies there is some $t \in S$ such that $xt = yt$. As mentioned in \cite{GK17}, the Klawe condition is very general and, in particular, left cancellative as well as inverse semigroups satisfy it. 
  \begin{proposition}\label{sem_prop_ncnt}
    Let $S$ be a semigroup and denote by $S'$ its unitization. Then
    \begin{itemize}
     \item[(i)] $S$ is amenable if and only if $S'$ is amenable.
     \item[(ii)] If any countable subset in $S$ is contained in an amenable countable subsemigroup of $S$, then $S$ is amenable. If, in addition, $S$ satisfies the \textit{Klawe condition}, then the reverse implication is also true.
    \end{itemize}
  \end{proposition}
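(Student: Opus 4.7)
The plan for part (i) is a direct verification. Given an invariant measure $\mu$ on $S$, define $\mu'(A) := \mu(A \cap S)$ for $A \subset S' = S \sqcup \{1\}$. Then $\mu'$ is a finitely additive probability measure on $S'$; for $s \in S$ and $A \subset S'$ one checks that the preimage $s^{-1}A$ in $S'$ intersects $S$ in exactly $s^{-1}(A \cap S)$ (preimage taken in $S$), so invariance under $s$ transfers, while invariance under the unit $1$ is automatic. Conversely, given an invariant $\mu'$ on $S'$, the key observation is that $s^{-1}\{1\} = \emptyset$ in $S'$ for every $s \in S$: indeed $s \cdot 1 = s \in S$ and $s \cdot t \in S$ for $t \in S$, so nothing is mapped onto the adjoined identity. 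Invariance then forces $\mu'(\{1\}) = \mu'(s^{-1}\{1\}) = 0$, so $\mu := \mu'|_{\mathcal{P}(S)}$ is a probability measure on $S$; its invariance follows because for $s \in S$ and $A \subset S$, the preimage $s^{-1}A$ computed in $S'$ differs from the one in $S$ by at most the singleton $\{1\}$.

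For the forward implication of (ii), the plan is a compactness argument in the set of finitely additive probability measures on $S$, viewed as a closed subset of the compact Hausdorff space $[0,1]^{\mathcal{P}(S)}$ with the product topology. Given a finite $\mathcal{E} \subset S$, use the hypothesis to pick a countable amenable subsemigroup $T_{\mathcal{E}} \subset S$ containing $\mathcal{E}$, together with an invariant probability measure $\mu_{\mathcal{E}}$ on $T_{\mathcal{E}}$, and push it forward to $S$ via $\tilde\mu_{\mathcal{E}}(A) := \mu_{\mathcal{E}}(A \cap T_{\mathcal{E}})$. For $s \in \mathcal{E}$ the crucial equality
\[
  \{t \in T_{\mathcal{E}} : st \in A\} = \{t \in T_{\mathcal{E}} : st \in A \cap T_{\mathcal{E}}\},
\]
valid because $T_{\mathcal{E}}$ is a subsemigroup, yields $\tilde\mu_{\mathcal{E}}(s^{-1}A) = \mu_{\mathcal{E}}(s^{-1}(A \cap T_{\mathcal{E}})) = \mu_{\mathcal{E}}(A \cap T_{\mathcal{E}}) = \tilde\mu_{\mathcal{E}}(A)$. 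Any cluster point of the net $\{\tilde\mu_{\mathcal{E}}\}$, indexed by the finite subsets of $S$ ordered by inclusion, will then be a fully invariant probability measure on $S$.

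For the converse of (ii), assume $S$ is amenable and satisfies the Klawe condition. Under Klawe, amenability is equivalent to the F\o lner condition (see \cite{GK17}), which supplies F\o lner witnesses $F_{\mathcal{E}, \varepsilon}$ for each finite $\mathcal{E} \subset S$ and $\varepsilon > 0$. I would then run a L\"owenheim--Skolem style construction: given a countable $X \subset S$, build an ascending chain of countable subsets $T_0 \subset T_1 \subset \cdots$ with $X \subset T_0$, and $T_{n+1}$ obtained from $T_n$ by adjoining all products of pairs from $T_n$, all F\o lner witnesses $F_{\mathcal{E}, 1/k}$ for finite $\mathcal{E} \subset T_n$ and $k \in \N$, and a Klawe witness $t \in S$ satisfying $xt = yt$ for every triple $s, x, y \in T_n$ with $sx = sy$. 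The union $T := \bigcup_n T_n$ is then a countable subsemigroup of $S$ which inherits both the Klawe condition and the F\o lner condition, and therefore is amenable.

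The main obstacle is the converse of (ii): one cannot simply restrict an invariant measure on $S$ to a countable subsemigroup (the restriction need not be a probability measure on the subsemigroup), so the argument is forced to route through the F\o lner condition. This in turn requires the Klawe condition to descend to the countable subsemigroup, which is why the inductive construction must simultaneously close under products, F\o lner witnesses, \emph{and} Klawe witnesses, while keeping each stage countable. By contrast, the forward direction of (ii) reduces to a routine compactness argument, and part (i) reduces to the observation that the adjoined unit necessarily carries no mass under any invariant measure on $S'$.
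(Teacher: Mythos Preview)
Your proof is correct and follows the same overall plan as the paper's. Part (i) and the forward direction of (ii) are handled identically; the paper phrases the limit in (ii) as an ultrafilter limit over the directed set of countable amenable subsemigroups rather than over finite subsets of $S$, but this is cosmetic.

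For the converse of (ii) both arguments run the same L\"owenheim--Skolem closure, with one technical difference worth noting. The paper invokes from \cite{GK17} the \emph{strong} F\o lner condition (F\o lner sets $F$ with the additional constraint $|sF| = |F|$ for every $s$ in the test set), which holds in $S$ because $S$ is amenable and Klawe, and which implies amenability outright for any semigroup by Argabright--Wilde \cite{AW67}. Consequently the paper only closes each $T_i$ under products and strong F\o lner witnesses; the resulting $T$ is strongly F\o lner and hence amenable without any need for $T$ itself to be Klawe. Your route---close under ordinary F\o lner witnesses \emph{and} Klawe witnesses, then use that Klawe plus F\o lner implies amenable---is equally valid and perhaps more self-contained, at the modest cost of one extra closure condition at each stage.
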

  \begin{proof}
    (i) The proof directly follows from the definition. Indeed, an invariant measure on $S$ can be extended to an invariant measure on $S'$ defining $\mu\left(\left\{1\right\}\right) = 0$. Conversely, $\left\{1\right\}$ is null for any invariant measure on $S'$, so any invariant measure on $S'$ is also an invariant measure on $S$.
    
    (ii) For the first part, let $\mathcal{A}\left(S\right)$ denote the set of countable and amenable subsemigroups of $S$. Furthermore, for $T \in \mathcal{A}\left(S\right)$ denote by $\mu_T$ an invariant measure on $T \subset S$. We may, without loss of generality, extend it to $S$ by defining $\mu_T\left(S \setminus T\right) := 0$. Observe that then $\mu_T\left(t^{-1}A\right) = \mu_T\left(A\right)$ for every $t \in T$, $A \subset T$. Consider the measure:
    \begin{equation}
      \mu \colon \mathcal{P}\left(S\right) \rightarrow \left[0, 1\right], \quad A \mapsto \mu\left(A\right) := \lim_{T \in \mathcal{A}\left(S\right)} \mu_T\left(A\right) = \lim_{T \in \mathcal{A}\left(S\right)} \mu_T\left(A \cap T\right), \nonumber
    \end{equation}
    where the limit is taken along a free ultrafilter of $\mathcal{A}\left(S\right)$. It follows from a straightforward computation that $\mu$ is an invariant measure on $S$.
    
    For the second part we follow a similar route to that of \cite[Proposition~3.4]{ALLW18}. Recall from \cite{GK17} that a semigroup satisfying the Klawe condition is amenable if and only if for every $\varepsilon > 0$ and finite $\mathcal{F} \subset S$ there is a $\left(\varepsilon, \mathcal{F}\right)$-F\o lner set $F \subset S$ such that $\left|F\right| = \left|sF\right|$ for every $s \in \mathcal{F}$ (see Theorem~2.6 in \cite{GK17} in relation with the notion of strong F\o lner condition).

    Let $C = \left\{c_n\right\}_{n \in \mathbb{N}} \subset S$ be a countable subset. In order to construct an amenable semigroup $T \supset C$ we define an increasing sequence $\left\{T_n\right\}_{n \in \mathbb{N}}$ of countable subsemigroups of $S$ by:
    \begin{itemize}
      \item $T_0$ is the subsemigroup generated by $C$.
      \item Suppose $T_i = \left\{t_j\right\}_{j \in \mathbb{N}}$ has been defined. By \cite{GK17} and the previous paragraph, for every $k \in \mathbb{N}$ we may find an $\left(1/k, \left\{t_1, \dots, t_k\right\}\right)$-F\o lner set $F_k \subset S$ such that $\left|t_j F_k\right| = \left|F_k\right|$ for every $j = 1, \dots, k$. We thus define the semigroup $T_{i + 1}$ to be the semigroup generated by $T_i \cup F_1 \cup F_2 \cup \dots$.
    \end{itemize}
    Finally, consider the semigroup $T = \cup_{i \in \mathbb{N}} T_i$. It is straightforward to prove that then $T$ is amenable, countable and contains $C$.
  \end{proof}

  \begin{remark}
    As in the case of metric spaces or algebras (see, e.g., \cite[Section~2.1]{ALLW18R} and \cite[Section~4]{AL14}), an amenable semigroup can have non-amenable sub-semigroups. For instance, take $S := \mathbb{F}_2 \sqcup \left\{0\right\}$, where $0 \omega = \omega 0 = 0$ for every $\omega \in \mathbb{F}_2$. 
    This semigroup $S$ is amenable, since it has a $0$ element, but has a non-amenable sub-semigroup. A more striking fact is that {\it amenable groups} may contain non-amenable semigroups. For instance, 
    the group $G_2$ of isometries of $\mathbb R^2$ is a solvable group containing a 
    non-commutative free semigroup (see \cite[Theorem 1.8, Theorem 14.30]{TW16}).  
  \end{remark}

  For the purpose of this article the main difference between a semigroup and a group is the lack of injectivity under left multiplication. This fact, among other things, makes it impossible to define a canonical \textit{regular representation} in the general semigroup case. We recall next some well-known facts.
  \begin{theorem}\label{th_sem}
    Let $S$ be a countable discrete semigroup. Consider the assertions:
    \begin{enumerate}
      \item $S$ is amenable. \label{th_sem_ame}
      \item $S$ has a F\o lner sequence. \label{th_sem_fol}
      \item $\mathbb{C} S$ is algebraically amenable. \label{th_sem_algame}
    \end{enumerate}
    Then (\ref{th_sem_ame}) $\Rightarrow$ (\ref{th_sem_fol}) $\Rightarrow$ (\ref{th_sem_algame}).
  \end{theorem}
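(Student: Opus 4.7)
The plan is to establish the two implications separately, the second being essentially combinatorial while the first is the standard Day--Namioka chain of arguments adapted to the semigroup setting. For (1)$\Rightarrow$(2) I would pass from the invariant mean to almost-invariant positive $\ell^{1}$-functions and then extract a F\o lner set from their level sets; for (2)$\Rightarrow$(3) I would pull F\o lner sets in $S$ back to F\o lner subspaces of $\C S$.

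For (1)$\Rightarrow$(2), I would first observe that $\mu(s^{-1}A)=\mu(A)$ for all $A$ is equivalent to left-invariance of $\mu$ regarded as a mean on $\ell^{\infty}(S)$ under the action $(L_{s}\phi)(t)=\phi(st)$. A Hahn--Banach/convexity argument in $\ell^{1}(S)^{\mathcal F}$ (Day's trick) then produces, for every finite $\mathcal F\subset S$ and every $\delta>0$, a finitely supported probability density $f\in\ell^{1}(S)_{+}^{1}$ satisfying $\|L_{s}^{*}f-f\|_{1}<\delta$ for all $s\in\mathcal F$, where $(L_{s}^{*}f)(t)=\sum_{u:\,su=t}f(u)$ is the pushforward. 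Next I would apply Namioka's level-set trick: writing $f=\int_{0}^{\infty}\mathbf{1}_{F_{r}}\,dr$ with $F_{r}:=\{t:f(t)\geq r\}$, a direct computation gives
\begin{equation*}
\int_{0}^{\infty}|sF_{r}\setminus F_{r}|\,dr \;=\; \sum_{t}\bigl(\max_{u:\,su=t}f(u)-f(t)\bigr)_{+} \;\leq\; \|L_{s}^{*}f-f\|_{1}
\end{equation*}
for every $s$, while $\int_{0}^{\infty}|F_{r}|\,dr=1$. Summing this over $s\in\mathcal F$ and averaging against $|F_{r}|$, a suitable choice of $\delta$ yields a level $r>0$ for which $F:=F_{r}$ is non-empty and satisfies $|sF\setminus F|\leq\varepsilon|F|$, whence $|sF\cup F|\leq(1+\varepsilon)|F|$, for every $s\in\mathcal F$.

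For (2)$\Rightarrow$(3), given a F\o lner sequence $\{F_{n}\}$ in $S$, I would set $W_{n}:=\mathrm{span}\{\delta_{t}:t\in F_{n}\}\subset\C S$. Fix a finite $\mathcal G\subset\C S$ and $\varepsilon>0$, and let $E\subset S$ be the (finite) union of the supports of the elements of $\mathcal G$. Applying the F\o lner condition to $E$ with parameter $\varepsilon/|E|$ gives a non-empty finite $F\subset S$ with $|sF\setminus F|\leq(\varepsilon/|E|)|F|$ for each $s\in E$. Setting $W:=\mathrm{span}\{\delta_{t}:t\in F\}$, the containment $aW\subset\mathrm{span}\{\delta_{u}:u\in EF\}$, valid for every $a\in\mathcal G$, yields
\begin{equation*}
\dim(aW+W)\;\leq\;|EF\cup F|\;\leq\;|F|+\sum_{s\in E}|sF\setminus F|\;\leq\;(1+\varepsilon)\dim W,
\end{equation*}
which is precisely the algebraic F\o lner condition and hence algebraic amenability of $\C S$.

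The step I expect to be most subtle is the level-set argument in (1)$\Rightarrow$(2). In the group case one has the exact identity $\int_{0}^{\infty}|sF_{r}\triangle F_{r}|\,dr=\|L_{s}^{*}f-f\|_{1}$ because $L_{s}^{*}$ is implemented by a bijection and maps $\mathbf{1}_{F_{r}}$ to $\mathbf{1}_{sF_{r}}$. For a general semigroup $s$ may collapse elements of $F$, so $L_{s}^{*}f$ is concentrated on $sF$ but is not uniform there, and the symmetric-difference identity degenerates to the one-sided inequality involving $\bigl(\max_{u:\,su=t}f(u)-f(t)\bigr)_{+}$ displayed above. Fortunately, the weak F\o lner condition of Definition~\ref{def:sem}~(\ref{sem_def_foln}) only demands control of $|sF\setminus F|$, which is exactly what the positive-part estimate provides, so the adaptation carries through.
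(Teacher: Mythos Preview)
Your proposal is correct and follows essentially the same route as the paper. The paper simply cites F{\o}lner and Frey--Namioka for (1)$\Rightarrow$(2), and your write-up is precisely the Day--Namioka argument those references contain; for (2)$\Rightarrow$(3) the paper, like you, passes to $W_n=\mathrm{span}\{\delta_t:t\in F_n\}$ and bounds $\dim(sW_n+W_n)$ by $|sF_n\cup F_n|$ (checking only generators $s\in S$, whereas you verify the condition for arbitrary $a\in\C S$, which is a harmless and slightly more thorough variant).
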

  \begin{proof}
    F\o lner proved the implication (\ref{th_sem_ame}) $\Rightarrow$ (\ref{th_sem_fol}) in the case of groups and, later, Frey and Namioka extended the proof for semigroups (cf., \cite{F55,N65}). To show (\ref{th_sem_fol}) $\Rightarrow$ (\ref{th_sem_algame}) choose a F\o lner sequence $\left\{F_n\right\}_{n \in \mathbb{N}}$ for $S$. Then the linear span of these subsets $W_n$ $:=$ $\text{span}\left\{f \mid f \in F_n\right\}$ defines a F\o lner sequence for $\mathbb{C} S$. In fact, note that $\text{dim}\left(W_n\right) = \left|F_n\right|$ and for any $s \in S$ we have
    \begin{equation}
      \frac{\text{dim}\left(s W_n + W_n\right)}{\text{dim}\left(W_n\right)} \leq \frac{\left|s F_n \cup  F_n\right|}{\left|F_n\right|} \xrightarrow{n \rightarrow \infty} 1, \nonumber
    \end{equation}
    which concludes the proof.
  \end{proof}

  We remark that none of the reverse implications in Theorem~\ref{th_sem} hold in general. It is well known that a finite semigroup may be non-amenable and any such semigroup is a counterexample to the implication (\ref{th_sem_fol}) $\Rightarrow$ (\ref{th_sem_ame}), because if $S$ is finite it has a trivial (constant) F\o lner sequence $F_n = S$. A concrete example was first given by Day in \cite{D57}: let $S = \left\{a, b\right\}$, where $ab = aa = a$ and $ba = bb = b$. Note that in this case any invariant measure $\mu$ must satisfy $\mu\left(b^{-1}\left\{a\right\}\right) = \mu\left(a^{-1}\left\{b\right\}\right) = \mu\left(\emptyset\right) = 0$. Therefore, no probability measure on $S$ can be invariant and, hence, $S$ not amenable.

  The following example is a counterexample to the implication (\ref{th_sem_algame}) $\Rightarrow$ (\ref{th_sem_fol}) in Theorem~\ref{th_sem}.
  \begin{example}\label{exa:counter}
    Consider the additive semigroup of natural numbers $\mathbb{N} = \left\{0, 1, 2, \dots \right\}$ and the free semigroup on two generators $\mathbb{F}^+_2 = \left\{a, b, ab, \dots\right\}$, where we assume that the semigroup $\mathbb{F}^+_2$ has no identity. Denote by $\alpha$ the action of $\mathbb{F}^+_2 \curvearrowright \mathbb{N}$ given by $\alpha \colon \mathbb{F}^+_2 \rightarrow \text{End}\left(\mathbb{N}\right)$, $a, b \mapsto \alpha_a\left(n\right) = \alpha_b\left(n\right) = n - 1$ when $n \geq 1$ and $\alpha_a\left(0\right) = \alpha_b\left(0\right) = 0$. 
    
    We claim the semigroup $S := \mathbb{N} \rtimes_{\alpha} \mathbb{F}^+_2$ does not satisfy the F\o lner condition, while its complex group algebra is algebraically amenable. Note that the element 
    \begin{equation}
      s = \left(0, a\right) - \left(1, a\right) \in \mathbb{C} S \nonumber
    \end{equation}
    clearly satisfies $\left(n, \omega\right) s = 0$ for every $\left(n, \omega\right) \in S$. Therefore $W := \mathbb{C} s$ is trivially a F\o lner subspace for $\mathbb{C} S$, since it is a one-dimensional left ideal. This proves that $\mathbb{C} S$ is algebraically amenable.

    In order to prove that $S$ does not satisfy the F\o lner condition, we shall prove that for any non-empty finite subset $F \subset S$ either $\left|\left(0, a\right) F \setminus F\right| \geq \left|F\right|/50$ or $\left|\left(0, b\right) F \setminus F\right| \geq \left|F\right|/50$. First observe that $\left|\left(0, a\right) F\right| \geq \left|F\right|/2$, and that equality holds if and only if
    \begin{equation}\label{sem_eq_star}
      F = \left\{\left(0, w_1\right), \left(1, w_1\right), \dots, \left(0, w_k\right), \left(1, w_k\right)\right\} \;\; \text{for some} \; w_i \in \mathbb{F}^+_2, \, i = 1, \dots, k.
    \end{equation}
    Indeed, if $F$ is of this form then clearly $\left|\left(0, a\right)F\right| = \left|F\right|/2$. And, conversely, given $\left(n, u\right) \neq \left(m, v\right)$ one has that $\left(0, a\right) \left(n, u\right) = \left(0, a\right) \left(m, v\right)$ only when $u = v$ and $n = 0, m = 1$ or $n = 1, m = 0$.

    Now suppose $F$ is of the form given in Eq.~(\ref{sem_eq_star}) and satisfies $\left|\left(0, a\right) F \setminus F\right| \leq \left|F\right|/5$. Note that, by the observation in the previous paragraph, $\left|\left(0, a\right) F \setminus F\right| \geq \left|F\right|/2 - N_a$, where $N_a$ is the number of words $w_i$ of $F$ that begin with $a$. Since a word cannot begin with $a$ and with $b$, it follows that the number $N_b$ of words that begin with $b$ satisfies $N_b \leq \left|F\right|/5$. Therefore, again, we conclude that $\left|\left(0, b\right) F \setminus F\right| \geq \left|F\right|/2 - \left|F\right|/5 \geq \left|F\right|/5$, as desired. This proves that no set of the form (\ref{sem_eq_star}) can be F\o lner.

    Finally, given an arbitrary $F \subset S$ we may decompose it into $F = F_* \sqcup F'$, where $F_*$ is of the form (\ref{sem_eq_star}) and $F'$ does not contain pairs of elements of the form $\left(0, \omega\right), \left(1,\omega\right)$, with $\omega \in \mathbb F_2^+$. We have
    \begin{align}
      \left|\left(\left(0, a\right) F \cup \left(0, b\right) F\right) \setminus F\right| \geq  \left|\left(0, a\right) F'\right| + \left|\left(0, a\right) F_*\right| + \left|\left(0, b\right) F'\right| + \left|\left(0, b\right) F_*\right| - \left|F\right| = \left|F'\right|. \nonumber
    \end{align}
    Note that the last equality follows from the fact that $\left|\left(0, a\right)F'\right| = \left|F'\right| = \left|\left(0, b\right) F'\right|$. Therefore, if $F'$ is relatively large when compared to $F$, then $F$ itself will not be F\o lner. Suppose hence that $\left|F'\right| \leq \left|F\right|/25$ and $\left|\left(0, a\right)F \setminus F\right| \leq \left|F\right|/25$. Then we have $|F_*| \geq \left(24/25\right) |F|$. Now observe that 
    \begin{equation}
      (0,a)F_*\setminus F_* = [(0,a)F_*\setminus F] \sqcup [(0,a)F_*\cap F']\subseteq ((0,a)F\setminus F) \cup  F', \nonumber
    \end{equation}
    and so
    \begin{equation}
      |(0,a)F_*\setminus F_*| \le \frac{|F|}{25} + \frac{|F|}{25}  \le \frac{2\cdot 25 \cdot  |F_*|}{25\cdot 24} <\frac{|F_*|}{5}. \nonumber
    \end{equation}
    Since $F_*$ is of the form (\ref{sem_eq_star}), it follows that $\left|\left(0, b\right) F_* \setminus F_*\right| \geq \left|F_*\right|/5$.
    Hence 
    \begin{equation}
      |(0,b)F_*\setminus F_*| \ge \frac{|F_*|}{5}\ge \frac{24|F|}{5\cdot 25}. \nonumber
    \end{equation}
    Finally,
    \begin{eqnarray*} 
      |(0,b)F \setminus F| & \geq & |(0,b)F_*\setminus F | \; = \;   |(0,b)F_*\setminus F_*| - |(0,b)F_* \cap F'| \\
                           & \geq & \frac{24|F|}{5\cdot 25} - \frac{|F|}{25} \ge \frac{|F|}{25}\;.
    \end{eqnarray*}
    It remains to consider what happens when $|F'| \geq |F|/25$. In this case, by the above computation, we get
    \begin{equation}
      2 \, \text{max} \{ |(0,a)F\setminus F | \, ,  |(0,b)F\setminus F| \} \ge | ((0,a)F \cup (0,b)F)\setminus F | \ge |F'| \ge \frac{|F|}{25}, \nonumber
    \end{equation}
    and we deduce that either $|(0,a)F\setminus F|$ or $|(0,b)F\setminus F|$ is greater or equal than $|F|/50$. 

    We conclude that no non-empty finite subset $F \subset S$ can be $(\varepsilon, \{ a,b\})$-invariant for $\varepsilon < 1/50$, which proves that $S$ itself does not satisfy the F\o lner condition.
  \end{example}

  In the following result we establish the difference between the F\o lner condition and the proper F\o lner condition. This result is analogous to \cite[Proposition~2.15]{ALLW18R} (see also \cite[Theorem~3.9]{ALLW18R}). We will use this statement in Proposition~\ref{invsem_fol_notpropfl}. Its proof is inspired by the corresponding result in the algebra setting \cite[Theorem~3.9]{ALLW18R}.
  \begin{theorem}\label{sem_fvspf}
    Let $S$ be a semigroup. Suppose that $S$ satisfies the F\o lner condition but not the proper F\o lner condition. Then there is an element $a \in S$ such that $\left|Sa\right| < \infty$.
  \end{theorem}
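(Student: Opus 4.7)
The plan is to argue by contrapositive, using the following key observation. If $F_0$ is any F\o lner set for parameters $(\varepsilon/2,\mathcal{F})$ and $A\subset S$ is a prescribed finite set to be contained, then the enlarged set $F:=F_0\cup A$ satisfies
\[
\frac{|sF\cup F|}{|F|}\;\le\;(1+\varepsilon/2)+\frac{2|A|}{|F_0|}\qquad(s\in\mathcal{F}),
\]
so $F$ is a F\o lner set for $(\varepsilon,\mathcal{F})$ containing $A$ as soon as $|F_0|$ is large enough. Hence the proper F\o lner condition would follow from the F\o lner condition if one could always find F\o lner sets of arbitrarily large size, and so I work under the opposite hypothesis. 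Specifically, from a bad triple $(\varepsilon_0,\mathcal{F}_0,A_0)$ witnessing the failure of properness I extract a uniform bound $N\in\mathbb{N}$ on the size of all F\o lner sets for $(\varepsilon_0/2,\mathcal{F}_0)$.

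Having this bound $N$, I shrink the tolerance to some $\tilde\varepsilon<\min(\varepsilon_0/2,1/(N+1))$. A short integrality argument then upgrades the F\o lner condition into an \emph{absorbing} condition: every F\o lner set $F$ for $(\tilde\varepsilon,\mathcal{F})$ with $\mathcal{F}\supset\mathcal{F}_0$ actually satisfies $sF\subset F$ for every $s\in\mathcal{F}$, since $|sF\setminus F|\le\tilde\varepsilon|F|<1$ is an integer. Enumerating $S=\{s_1,s_2,\dots\}$ and setting $S_m:=\mathcal{F}_0\cup\{s_1,\dots,s_m\}$, I then obtain a sequence of finite non-empty sets $F_m\subset S$ with $|F_m|\le N$ and $S_mF_m\subset F_m$.

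The most delicate step is the final one, where I must contradict the assumption $|Sa|=\infty$ for every $a\in S$ in order to force the desired element. The mechanism is a drift-and-glue argument: if $a\in F_m$ then $S_ma\subset F_m$, and hence $|S_ma|\le N$, so the fact that $|Sa|=\infty$ together with $S=\bigcup_m S_m$ forces each fixed $a$ to lie in $F_m$ for only finitely many indices $m$. This allows me to extract inductively a strictly increasing sequence $n_1<n_2<\cdots$ along which the $F_{n_k}$ are pairwise disjoint. The union $U_k:=F_{n_1}\sqcup\cdots\sqcup F_{n_k}$ is then absorbed by $S_{n_1}\subset S_{n_k}$, so it is itself a F\o lner set for $(\tilde\varepsilon,S_{n_1})$ with $|U_k|\ge k$. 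Choosing $k>N$ violates the uniform bound, and the contradiction yields the required element $a\in S$ with $|Sa|\le N<\infty$. The main difficulty lies precisely in making this drift-and-glue step clean: one has to keep careful track of which parameters $(\tilde\varepsilon,\mathcal{F})$ control each F\o lner set so that disjoint absorbing sets can be safely unionized while still being F\o lner for the fixed pair $(\tilde\varepsilon,S_{n_1})$ used to invoke the bound $N$.
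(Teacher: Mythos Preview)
Your argument is correct, and it shares the first two stages with the paper's proof: both first convert the failure of the proper F\o lner condition into a uniform bound $N$ on the size of all $(\varepsilon_0/2,\mathcal{F}_0)$-F\o lner sets (the paper asserts this rather tersely, whereas your enlargement estimate $|sF\cup F|/|F|\le (1+\varepsilon/2)+2|A|/|F_0|$ makes the reason explicit), and both then use the integrality trick $\tilde\varepsilon|F|<1$ to upgrade F\o lner sets to \emph{absorbing} sets $sF\subset F$.

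Where the two proofs diverge is in the final extraction of the element $a$. The paper argues directly: for each finite $\mathcal{F}\supset\mathcal{F}_0$ there is a \emph{unique maximal} absorbing set $F_{\mathcal{F}}$ (uniqueness because the union of two absorbing sets is again absorbing), these are nested decreasing in $\mathcal{F}$, and since $|F_{\mathcal{F}}|$ is a decreasing net of positive integers it stabilises at some $\mathcal{F}_1$; then $F_{\mathcal{F}_1}=F_{\mathcal{F}_1\cup\{s\}}$ for every $s\in S$, so $F_{\mathcal{F}_1}$ is absorbed by all of $S$ and any $a\in F_{\mathcal{F}_1}$ works. Your route is instead a contradiction: assuming $|Sa|=\infty$ for every $a$ forces each $a$ to lie in only finitely many $F_m$, which lets you extract pairwise disjoint absorbing sets whose union is still $(\varepsilon_0/2,\mathcal{F}_0)$-F\o lner but of size exceeding $N$. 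Both endgames are short; the paper's stabilisation argument has the mild advantage of producing the element $a$ constructively and not requiring the enumeration of $S$, while your disjoint-union trick gives a clearer quantitative picture of why the bound $N$ must break.
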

  \begin{proof}
    Given $\varepsilon > 0$ and a non-empty finite subset $\mathcal{F} \subset S$ define
    \begin{align}
      \text{F\o l}\left(\varepsilon, \mathcal{F}\right) &:= \left\{ F \subset S \; \mid \; 0 < \left|F\right| < \infty \;\; \text{and} \;\; \max_{s \in \mathcal{F}} \frac{\left|s F \setminus F\right|}{\left|F\right|} \leq \varepsilon \right\}, \nonumber \\
      M_{\varepsilon, \mathcal{F}} & := \sup_{F \in \text{F\o l}\left(\varepsilon, \mathcal{F}\right)} \left|F\right| \in \mathbb{N} \cup \left\{\infty\right\}. \nonumber
    \end{align}
    Since $S$ is \textit{not} properly F\o lner there is a pair $\left(\varepsilon_0, \mathcal{F}_0\right)$ with finite $M_{\varepsilon_0, \mathcal{F}_0}$. Note that the pairs $\left(\varepsilon, \mathcal{F}\right)$ are partially ordered by $\left(\varepsilon_1, \mathcal{F}_1\right) \leq \left(\varepsilon_2, \mathcal{F}_2\right)$ if and only if $\mathcal{F}_1 \subset \mathcal{F}_2$ and $\varepsilon_2 \leq \varepsilon_1$. This partial order induces a partial order on $M_{\varepsilon, \mathcal{F}}$ and thus we may suppose that $\varepsilon_0 M_{\varepsilon_0, \mathcal{F}_0} < 1$. Indeed, simply substitute $\varepsilon_0$ with some $\varepsilon_0' < \min\left\{\varepsilon_0, \,1/M_{\varepsilon_0, \mathcal{F}_0}\right\}$.

    We first claim that for any $\varepsilon \in \left(0, \varepsilon_0\right]$ and $\mathcal{F} \supset \mathcal{F}_0$ we have $\text{F\o l}\left(0, \mathcal{F}\right) = \text{F\o l}\left(\varepsilon, \mathcal{F}\right)$. Indeed, the inclusion $\subset$ is obvious. Moreover, for $F \in \text{F\o l}\left(\varepsilon, \mathcal{F}\right)$ and $s \in \mathcal{F}$ we have
    \begin{equation}
      \left|s F \setminus F\right| \leq \varepsilon \left|F\right| \leq \varepsilon M_{\varepsilon, \mathcal{F}} \leq \varepsilon_0 M_{\varepsilon_0, \mathcal{F}_0} < 1 \nonumber
    \end{equation}
    and hence $\left|s F \setminus F\right| = 0$. Therefore $F \in \text{F\o l}\left(0, \mathcal{F}\right)$. Thus it makes sense to consider the largest F\o lner sets with $\varepsilon = 0$:
    \begin{equation}
      \text{F\o l}_{\text{max}}\left(0, \mathcal{F}\right) := \left\{ F \in \text{F\o l}\left(0, \mathcal{F}\right) \; \mid \; \left|F\right| \geq \left|F'\right| \; \text{for all} \; F' \in \text{F\o l}\left(0, \mathcal{F}\right) \right\}. \nonumber
    \end{equation}
    
    Next we claim that if $\mathcal{F} \subset \mathcal{F}'$ and $F_m \in \text{F\o l}_{\text{max}}\left(0, \mathcal{F}\right), F_m' \in \text{F\o l}_{\text{max}}\left(0, \mathcal{F}'\right)$, then $F_m' \subset F_m$. Indeed, suppose the contrary. Then $\widehat{F} := F_m \cup F_m'$ would be in $\text{F\o l}_{\text{max}}\left(0, \mathcal{F}\right)$ and strictly larger than $F_m$, contradicting the maximality condition in the definition of $\text{F\o l}_{\text{max}}\left(0, \mathcal{F}\right)$. In particular, this means that $\text{F\o l}_{\text{max}}\left(0, \mathcal{F}\right)$ has only one element, for if $F_1, F_2 \in \text{F\o l}_{\text{max}}\left(0, \mathcal{F}\right)$ then $F_1 \subset F_2 \subset F_1$. 
   
    Finally, denote by $F_{\mathcal{F}}$ the element of $\text{F\o l}_{\text{max}}\left(0, \mathcal{F}\right)$ and consider the net $\left\{\left| F_{\mathcal{F}} \right|\right\}_{\mathcal{F} \in \mathcal{J}}$, where $\mathcal{J}:=\{\mathcal{F}\subset S \mid |\mathcal{F}|<\infty \; \text{and} \; \mathcal{F}_0\subset\mathcal{F}\}$. This net is decreasing and contained in $\left[1, \left|F_{\mathcal{F}_0}\right|\right] \cap \mathbb{N}$ and, thus, has a limit, which is attained by some $\mathcal{F}_1$. This means that $s F_{\mathcal{F}_1} \subset F_{\mathcal{F}_1}$ for all $s \in S$. Therefore any $a \in F_{\mathcal{F}_1}$ will meet the requirements of the theorem.
  \end{proof}

  \section{Inverse semigroups}\label{sec_invsem}
  In the rest of the article we will incorporate into the analysis notions of paradoxical decompositions and the relation to C*-algebras in the category of inverse semigroups, i.e., where one only has a locally injective action. While the rest of the text will be devoted to inverse semigroups, this section focuses only on the \textit{algebraic} (meaning non-C$^*$) properties and Section~\ref{sec_invsem_ctr} will focus on how these properties of $S$ translate into properties of a C*-algebra defined as a generalization of the uniform Roe algebra of a group. 
  
  First we recall the definition of inverse semigroup as well as some important structures and examples.
  \begin{definition}
    An \textit{inverse semigroup} is a semigroup $S$ such that for every $s \in S$ there is a unique $s^* \in S$ satisfying $s s^* s = s$ and $s^* s s^* = s^*$.
  \end{definition}
  \begin{example}
    The most important example of an inverse semigroup is that of the set of partial bijections on a given set $X$, denoted by $\mathcal{I}(X)$. Elements $\left(s, A, B\right) \in \mathcal{I}(X)$ are bijections $s \colon A \rightarrow B$, where $A, B \subset X$. The operation of the semigroup is just the composition of maps where it can be defined. This semigroup contains both a zero element, namely $\left(0, \emptyset, \emptyset\right)$, and a unit, namely $\left(\text{id}, X, X\right)$. Just as the elements of a group $G$ can be thought of as bijections of $G$ on itself by left multiplication, every inverse semigroup $S$ can be thought as contained in $\mathcal{I}\left(S\right)$ via the Wagner-Preston representation (see, e.g., \cite{V53,P12,Au15}). 
  \end{example}
  \begin{remark}
    Given an inverse semigroup $S$, the set $E\left(S\right) = \left\{s^*s \mid s \in S\right\}$ is the set of all \textit{idempotents} (or \textit{projections}) of $S$, i.e., elements satisfying $e = e^2 \in S$. Observe that in an inverse semigroup all idempotents commute and satisfy $e^*=e$ (see~\cite{V53} or~\cite[Theorem 3]{L98}). Moreover, $E\left(S\right)$ has the 
    structure of a meet semi-lattice with respect to the order $e \leq f \iff ef = e$, and $S$ is a group if and only if $E\left(S\right)$ only has a single element (the identity in the group). If one considers $S$ as contained in $\mathcal{I}\left(S\right)$, then an idempotent $e \in S$ will be identified with the identity function $\text{id}_{eS} \colon eS \rightarrow eS$.
      
    Note also that $S$ is unital if and only if $E\left(S\right)$ has a greatest element. We shall assume that all our inverse semigroups are unital with unit denoted by $1$. 
  \end{remark}
  We will show in Theorems~\ref{invsem_th_ame01} and~\ref{invsem_th_ame1} that all the different amenability notions are again related in the inverse semigroup case, but not quite as elegantly intertwined as in groups (see Section~\ref{sec_pre}). Such a conclusion might seem surprising, since it is known that the amenability of an inverse semigroup is closely related to the amenability of its group homomorphic image $G\left(S\right)$, as the following result of Duncan and Namioka in \cite{DN78} shows. In the literature, this fact has led to the opinion that amenability in the inverse semigroup case can be traced back to the group case. Our results later will refine this line of thought.
  \begin{theorem}\label{invsem_th_maxhomimg}
    A countable discrete inverse semigroup $S$ is amenable if and only if the group $\text{G}\left(S\right)$ is amenable, where $\text{G}\left(S\right) = S/\sim$ and $s \sim t$ if and only if $es = et$ for some projection $e \in E\left(S\right)$.
  \end{theorem}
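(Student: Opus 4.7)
The plan is to handle the two implications separately, with the backward direction being much more delicate. For notational convenience let $\pi\colon S\to G(S)$ denote the quotient map.

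Forward direction ($S$ amenable $\Rightarrow G(S)$ amenable): I would use pushforward. Given a left-invariant probability measure $\mu$ on $S$, set $\bar{\mu}(B) := \mu(\pi^{-1}(B))$ for $B\subseteq G(S)$. Normalization and finite additivity are automatic. For $G(S)$-invariance, fix $g \in G(S)$ and pick any $s \in \pi^{-1}(g)$: then $t \in \pi^{-1}(g^{-1}B)$ iff $g\pi(t) = \pi(st) \in B$ iff $t \in s^{-1}\pi^{-1}(B)$. Hence $\pi^{-1}(g^{-1}B) = s^{-1}\pi^{-1}(B)$, and invariance of $\mu$ under left multiplication by $s$ translates directly into invariance of $\bar{\mu}$ under left multiplication by $g$.

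Reverse direction ($G(S)$ amenable $\Rightarrow S$ amenable): I would translate amenability into the existence of an $S$-invariant mean on $\ell^\infty(S)$ and try to transport a $G(S)$-invariant mean $m_G$ along the precomposition map $T\colon \ell^\infty(G(S)) \to \ell^\infty(S)$, $Tf := f \circ \pi$. A direct check shows $T$ is a unital $*$-embedding that intertwines the left actions, i.e., $L_s \circ T = T \circ L_{\pi(s)}$ for every $s \in S$, where $(L_s f)(t) := f(st)$. Therefore the image of $T$ is an $S$-invariant unital C*-subalgebra of $\ell^\infty(S)$, and $m_0 := m_G \circ T^{-1}$ is a well-defined $S$-invariant state on that image. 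The task reduces to extending $m_0$ to an $S$-invariant mean on all of $\ell^\infty(S)$.

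The main obstacle is precisely this extension: the usual averaging trick for groups relies on amenability of the ambient object, which is what we are trying to prove. To circumvent this, I would use the specific structure of inverse semigroups: whenever $s \sim t$ one has $es = et$ for some idempotent $e$, so left multiplication by $s$ and $t$ agree after ``truncating'' by $e$. Combined with the fact (used in Proposition~\ref{sem_prop_ncnt}) that inverse semigroups satisfy the Klawe condition and therefore admit a reformulation of amenability via the strong F\o lner condition $|sF| = |F|$, this suggests working on the F\o lner side rather than the mean side: given a strong F\o lner net $\{F_n\}$ in $G(S)$, lift each $F_n$ along a section of $\pi$ and then correct the lift fiber-by-fiber, using the idempotent witnesses of $\sim$-equivalence to repair the failure of equivariance of an arbitrary section. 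The delicate part will be controlling this correction so that the resulting finite subsets of $S$ are genuinely F\o lner for a prescribed finite $\mathcal{F}\subset S$; a naive section is not equivariant, but the idempotent data attached to $\sim$ should give just enough room to absorb the discrepancy within the F\o lner tolerance, and taking an appropriate weak-$*$ cluster point of the associated averaged states then yields the desired invariant mean on $S$.
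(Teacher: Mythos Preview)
The paper does not give its own proof of this result; it is stated with a citation to Duncan and Namioka~\cite{DN78}. So there is nothing in the paper to compare against, and the question is simply whether your outline stands on its own.

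Your forward direction is correct and standard: the pushforward $\bar\mu = \mu\circ\pi^{-1}$ works because $\pi$ is a semigroup homomorphism, and your computation $\pi^{-1}(g^{-1}B)=s^{-1}\pi^{-1}(B)$ for any $s\in\pi^{-1}(g)$ is exactly what is needed.

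Your reverse direction has the right architecture but a genuine gap at the ``correction'' step. You propose to lift a F{\o}lner set $\bar F\subset G(S)$ along a section $\sigma$ and then repair the failure of equivariance using the idempotent witnesses of~$\sim$. The problem you do not address is \emph{which side} the idempotent acts on. The definition of $\sim$ gives witnesses on the left: $es=et$. But if you correct the lift by setting $F=e\,\sigma(\bar F)$ for a small idempotent $e$, then for $s\in\mathcal F$ you get
\[
s\cdot e\sigma(g)=(ses^*)\,s\sigma(g),
\]
and the idempotent $ses^*$ appearing here is not $e$, so the relation $es\sigma(g)=e\sigma(\pi(s)g)$ that you arranged cannot be applied. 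The phrase ``should give just enough room'' hides precisely this non-commutation issue.

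The clean fix is to use the equivalent \emph{right} characterization of the minimum group congruence: $s\sim t$ iff $sf=tf$ for some $f\in E(S)$ (this follows from the description of $\sim$ via the natural partial order, since $u\le s$ iff $u=es$ for some idempotent iff $u=sf$ for some idempotent). Then, given finite $\mathcal F\subset S$ and an $(\varepsilon,\pi(\mathcal F))$-F{\o}lner set $\bar F\subset G(S)$, choose for each $s\in\mathcal F$ and $g\in\bar F$ an idempotent $f_{s,g}$ with $s\sigma(g)f_{s,g}=\sigma(\pi(s)g)f_{s,g}$, let $f$ lie below all of them, and set $F:=\sigma(\bar F)\,f$. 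Now right multiplication by $f$ commutes with the left action of $s$, so $sF=\sigma(\pi(s)\bar F)\,f$ on the nose, and the F{\o}lner estimate (indeed the strong F{\o}lner condition $|sF|=|F|$) transfers directly from $\bar F$ to $F$. With this adjustment your plan goes through; without it the correction step is not justified.
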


    \subsection{A characterization of invariant measures}
    Recall from Definition~\ref{def:sem} that a semigroup $S$ is called amenable if there is an invariant probability measure $\mu \colon \mathcal{P}\left(S\right) \rightarrow \left[0, 1\right]$. One handicap to this definition of amenability is that one loses contact with the notion of paradoxical decomposition, which was, from the beginning, close to amenability. In addition, a non-amenable semigroup does not naturally provide elements $s_i, t_j$ whose regular representation induce properly infinite projections in the C$^*$-algebra $\mathcal{R}_G$, as in the group case.
    Avoiding this drawback will be a critical step in the proof of Theorem~\ref{invsem_th_ame1}. We will present an alternative approach taking into account the domain of the action of the semigroup. In this way we can directly relate the non-amenability of $S$ with the proper infiniteness of the identity of the associated C*-algebra. However, before developing the new approach, we present some basic results for inverse semigroups. In particular, the following lemma, whose proof is elementary, will be very useful in the rest of the text.
    \begin{lemma}\label{invsem_lemma_sets}
      Let $S$ be an inverse semigroup. For any $s \in S$ and $A, B \subset S$ the following relations hold:
      \begin{enumerate}
        \item[(i)] \label{invsem_lemma_sets_1} $s\left(s^{-1} A \cap s^*ss^{-1} A\right) = A \cap ss^* A = ss^{-1} A \subset A \subset s^{-1}s A$.
        \item[(ii)] \label{invsem_lemma_sets_2} $ss^*\left(A \cap ss^*B\right) = A \cap ss^*B$.
        \item[(iii)] \label{invsem_lemma_sets_3} $s^{-1}\left(A \setminus ss^*A\right) = \emptyset$.
      \end{enumerate}
    \end{lemma}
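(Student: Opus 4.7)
The plan is to work entirely in set-theoretic terms, unpacking $s^{-1}A = \{t \in S \mid st \in A\}$ (preimage) and $sA = \{sa \mid a \in A\}$ (image under left multiplication), and invoking only the defining identities $ss^*s = s$, $s^*ss^* = s^*$, together with the commutativity of idempotents. The key preliminary observation I would establish is the set equality $sS = ss^*S$: the inclusion $sS \subset ss^*S$ follows from $sx = ss^*(sx)$, and the reverse from $ss^*x = s(s^*x)$. An immediate corollary is the \emph{fixing property}: every $y \in ss^*S$ satisfies $ss^*y = y$, since writing $y = ss^*z$ gives $ss^*y = (ss^*)^2 z = ss^* z = y$ by idempotency.

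For part (i) I would attack the equalities from the outside in. By definition, $ss^{-1}A = \{st \mid st \in A\} = A \cap sS$, and combining with $sS = ss^*S$ plus the fixing property yields $A \cap sS = A \cap ss^*A$ (elements of the left-hand side, being fixed by $ss^*$, lie in $ss^*A$). For the first equality, I would note that $s^*ss^{-1}A \subset s^{-1}A$, since for $t \in s^{-1}A$ one has $s(s^*st) = (ss^*s)t = st \in A$. Thus the intersection $s^{-1}A \cap s^*ss^{-1}A$ simplifies to $s^*ss^{-1}A$, and applying $s$ together with $ss^*s = s$ yields $ss^{-1}A$, matching the middle term. The chain $ss^{-1}A \subset A \subset s^{-1}sA$ is then immediate from the definitions of image and preimage.

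Parts (ii) and (iii) follow quickly once the fixing property is available. For (ii), any element of $A \cap ss^*B$ lies in $ss^*S$, hence is fixed by $ss^*$, and so $ss^*(A \cap ss^*B) = A \cap ss^*B$. For (iii), suppose $t \in s^{-1}(A \setminus ss^*A)$; then $st \in A$, but also $st \in sS = ss^*S$, whence $st = ss^*(st) = ss^* a$ with $a := st \in A$, forcing $st \in ss^*A$ and contradicting $st \notin ss^*A$. Thus the preimage is empty.

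I do not foresee a substantive obstacle: the lemma is a direct combinatorial consequence of the inverse-semigroup axioms, and once the identity $sS = ss^*S$ and its fixing corollary are isolated, each assertion reduces to a brief substitution. The step most prone to notational confusion is the first equality in (i), where one must read $s^{-1}A$ strictly as the preimage set rather than as a product with some ``inverse element'', and carefully distinguish the image $ss^*A$ of a set from the action of the idempotent $ss^*$ on an individual element.
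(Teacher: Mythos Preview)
Your proposal is correct and follows essentially the same element-chasing strategy as the paper, which also reduces everything to the identities $ss^*s=s$ and $s^*ss^*=s^*$ via direct unpacking of images and preimages. Your presentation is slightly more organized in that you isolate the identity $sS=ss^*S$ and its ``fixing'' corollary up front and then apply them uniformly, whereas the paper performs the equivalent substitutions inline; the mathematical content is the same.
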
 
    \begin{proof}
      The inclusions $ss^{-1} A \subset A \subset s^{-1}s A$ follow directly from the definition and (ii) is straightforward to check. To show $s\left(s^{-1} A \cap s^*ss^{-1} A\right) = A \cap ss^* A $ choose $t \in s^{-1}A \cap s^*ss^{-1}A$. Then $st \in A$ and $t = s^*sq$ for some $q \in S$ with $sq \in A$, hence $st = ss^*sq \in ss^*A$. To show the reverse inclusion consider $t \in A \cap ss^*A$, i.e., $A \ni t = ss^*a$ for some $a \in A$. Then $s^*a\in s^{-1}A$ and $t = ss^*s(s^*a) \in s\left(s^{-1} A \cap s^*ss^{-1} A\right)$. The remaining equalities are proved in a similar vein. 
    \end{proof}
    The following result gives a useful characterization of invariant measures that avoids the use of preimages.
    \begin{proposition}\label{invsem_ame_nch}
      Let $S$ be a countable and discrete inverse semigroup with identity $1 \in S$ and $\mu$ be a probability measure on it. Then the following conditions are equivalent:
      \begin{enumerate}
        \item \label{invsem_ame_nch1} $\mu$ is invariant, i.e., $\mu\left(A\right) = \mu\left(s^{-1} A\right)$ for all $s \in S$, $A \subset S$.
        \item \label{invsem_ame_nch2} $\mu$ satisfies the following conditions for all $s \in S$, $A \subset S$:
        \begin{enumerate}
          \item[(2.a)] $\mu\left(A\right) = \mu\left(A \cap s^*s A\right)$.
          \item[(2.b)] $\mu\left(s^*s A\right) = \mu\left(sA\right)$.
        \end{enumerate}
      \end{enumerate}
    \end{proposition}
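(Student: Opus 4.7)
The plan is to split the argument into the two implications, using Lemma~\ref{invsem_lemma_sets} throughout. The underlying geometric picture (made precise by the Wagner-Preston embedding) is that $s$ acts as a bijection between $s^*sS$ and $ss^*S$, so $s^{-1}$ forgets whatever lies outside $ss^*A$ (Lemma~\ref{invsem_lemma_sets}(iii)), while (2.a) and (2.b) should be read, respectively, as the statements that $\mu$ is concentrated on the appropriate ``domain'' and is preserved under this bijection. All manipulations below rest on $ss^*s = s$ and on the fact that the idempotents in $S$ commute.

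For (1)$\Rightarrow$(2), I would first establish (2.a) by applying Lemma~\ref{invsem_lemma_sets}(iii) with $s^*$ in place of $s$: this yields $(s^*)^{-1}A = (s^*)^{-1}(A \cap s^*sA)$, and taking $\mu$ of both sides together with invariance twice gives $\mu(A) = \mu(A \cap s^*sA)$. For (2.b), the goal is to evaluate $\mu(s^{-1}(sA))$, which equals $\mu(sA)$ by invariance. The inclusion $s^*sA \subset s^{-1}(sA)$ is immediate from $s(s^*sa)=sa$, and applying the already-proved (2.a) to the set $C := s^{-1}(sA)$ reduces matters to checking the set equality $s^*s\cdot s^{-1}(sA) = s^*sA$, which is a short computation using $s^*ss^* = s^*$.

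For (2)$\Rightarrow$(1), the strategy is to peel off the part of $A$ outside $ss^*A$. Lemma~\ref{invsem_lemma_sets}(iii) gives $s^{-1}A = s^{-1}(A \cap ss^*A)$, and (2.a) applied with $s\mapsto s^*$ gives $\mu(A) = \mu(A \cap ss^*A)$, so one may assume $A \subset ss^*S$. For such $A$, apply (2.a) to the set $s^{-1}A$, note that $s^*s\cdot s^{-1}A \subset s^{-1}A$, and identify $s^*s\cdot s^{-1}A$ with $s^*A$ by a direct set-theoretic calculation (this is where the injectivity of $s$ on $s^*sS$ enters, giving $s^{-1}A \cap s^*sS = s^*A$). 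Finally, (2.b) applied with $s^*$ in place of $s$ yields $\mu(s^*A) = \mu(ss^*A) = \mu(A)$, closing the loop.

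The main obstacle I anticipate is not depth but bookkeeping: set identities such as $s^*s\cdot s^{-1}B = s^{-1}B \cap s^*sS$ and, for $B \subset ss^*S$, $s^{-1}B \cap s^*sS = s^*B$, need to be verified by hand, because the preimages involve elements outside $s^*sS$ that must be moved back into $s^*sS$ via $t\mapsto s^*st$. None of these identities is difficult individually, but one has to be consistent about which arguments are ``in the domain'' of the relevant partial bijection and use Lemma~\ref{invsem_lemma_sets} to systematically discard the rest.
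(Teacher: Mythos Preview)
Your proposal is correct and follows essentially the same approach as the paper, relying on Lemma~\ref{invsem_lemma_sets} for the set-theoretic identities and combining (2.a) and (2.b) in the expected way. The only notable difference is in the direction (2)$\Rightarrow$(1): rather than reducing to $A \subset ss^*S$ and passing through $s^*A$, the paper does the computation in one line by applying (2.a) to $s^{-1}A$, then (2.b), and then invoking Lemma~\ref{invsem_lemma_sets}(i) directly to get $s(s^{-1}A \cap s^*s\, s^{-1}A) = A \cap ss^*A$, after which (2.a) with $s^*$ finishes; this avoids the separate verification of $s^{-1}A \cap s^*sS = s^*A$ but is otherwise the same argument.
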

    \begin{proof}
      For notational simplicity we show conditions (2.a) and (2.b) interchanging the roles of $s$ and $s^*$. The implication (\ref{invsem_ame_nch1}) $\Rightarrow$ (\ref{invsem_ame_nch2}) follows from two simple observations. First, note that
      \begin{equation}
        \mu\left(A \setminus ss^*A\right) = \mu\left(s^{-1}\left(A \setminus ss^*A\right)\right) = \mu\left(\emptyset\right) = 0. \nonumber
      \end{equation}
      Therefore $\mu\left(A\right) = \mu\left(A \cap ss^*A\right) + \mu\left(A \setminus ss^*A\right) = \mu\left(A \cap ss^*A\right)$, as required. Secondly, observe that $s^*A \subset s^{-1}ss^*A$. Thus
      \begin{align}
        \mu\left(ss^*A\right) = \mu\left(s^{-1}ss^*A\right) & = \mu\left(s^{-1}ss^*A \setminus s^*A\right) + \mu\left(s^*A\right) \nonumber \\
        & = \mu\left(\left(s^*\right)^{-1}\left(s^{-1}ss^*A \setminus s^*A\right)\right) + \mu\left(s^*A\right) = \mu\left(s^*A\right). \nonumber
      \end{align}
      The reverse implication (\ref{invsem_ame_nch2}) $\Rightarrow$ (\ref{invsem_ame_nch1}) follows from (2.a), (2.b) and Lemma~\ref{invsem_lemma_sets}(i). In fact,
      \begin{equation}
        \mu\left(s^{-1} A\right) = \mu\left(s^{-1} A \cap s^*s s^{-1} A\right) = \mu\left(s\left(s^{-1} A \cap s^*s s^{-1} A\right)\right) = \mu\left(A \cap ss^*A\right) = \mu\left(A\right), \nonumber
      \end{equation}
      which proves (\ref{invsem_ame_nch1}).
    \end{proof}
    \begin{remark}
      Observe that this characterization indeed restricts to the usual one in the case of groups since then $s^* = s^{-1}$ and $A \cap s s^{-1} A = A$. Therefore condition (2.a) is empty in the group case.
    \end{remark}
    In the following corollary we combine conditions (2.a) and (2.b) into a single one.
    \begin{corollary}\label{invsem_cor_ame}
      Let $S$ be a countable and discrete inverse semigroup and $\mu$ be a probability measure on it. Then $\mu$ is invariant if and only if $\mu\left(A\right) = \mu\left(s\left(A \cap s^*s A\right)\right)$ for all $s \in S$, $A \subset S$.
    \end{corollary}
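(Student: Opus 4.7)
The plan is to reduce the problem to Proposition~\ref{invsem_ame_nch} and show that the single condition $\mu(A)=\mu(s(A\cap s^*sA))$ is equivalent to the conjunction of the two conditions (2.a) and (2.b) of that proposition. The proof is then a short manipulation that exploits one crucial observation: for any set $B$ contained in the image $s^*sS$ (i.e.\ such that every $b\in B$ satisfies $s^*sb=b$), one has $s^*sB=B$. In particular, this applies to $B:=A\cap s^*sA$, since $x\in A\cap s^*sA$ forces $x=s^*sy$ for some $y$ and hence $s^*sx=(s^*s)^2y=s^*sy=x$.

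For the forward direction, assuming $\mu$ is invariant, I would set $B:=A\cap s^*sA$ and apply the two parts of Proposition~\ref{invsem_ame_nch} in turn. Condition (2.a) gives $\mu(B)=\mu(A)$. Condition (2.b), applied with $A$ replaced by $B$, gives $\mu(s^*sB)=\mu(sB)$; but the observation above yields $s^*sB=B$, so $\mu(sB)=\mu(B)=\mu(A)$, which is exactly $\mu(s(A\cap s^*sA))=\mu(A)$.

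For the reverse direction, I would recover (2.b) and (2.a) separately from the single hypothesis. To get (2.b), substitute $A\mapsto s^*sA$ in the hypothesis; since $(s^*s)^2=s^*s$, the set $s^*sA$ is fixed by $s^*s$, so $s^*sA\cap s^*s(s^*sA)=s^*sA$, and then $s\cdot s^*sA=sA$ (using $ss^*s=s$), giving $\mu(s^*sA)=\mu(sA)$. For (2.a), set $B=A\cap s^*sA$ again: the hypothesis yields $\mu(A)=\mu(sB)$, and the already-established (2.b) applied to $B$ (together with $s^*sB=B$) yields $\mu(sB)=\mu(s^*sB)=\mu(B)$, so $\mu(A)=\mu(B)=\mu(A\cap s^*sA)$.

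There is no serious obstacle here; the whole argument turns on the identity $s^*sB=B$ for $B\subseteq s^*sS$, which is the inverse-semigroup analogue of the fact that a projection acts as the identity on its range. The slightly delicate point, worth stating carefully, is why $A\cap s^*sA$ really is contained in $s^*sS$ (it is not the case that $s^*sA\subseteq s^*sS$ fails, but one should spell out that idempotents in $S$ are self-adjoint and satisfy $e^2=e$, so that the intersection with $s^*sA$ automatically consists of elements fixed by $s^*s$). Once this is noted, the equivalence drops out from Proposition~\ref{invsem_ame_nch} in a few lines.
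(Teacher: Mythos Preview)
Your proof is correct and follows essentially the same approach as the paper: both reduce to Proposition~\ref{invsem_ame_nch} via the key identity $s^*s(A\cap s^*sA)=A\cap s^*sA$ (which the paper invokes through Lemma~\ref{invsem_lemma_sets}(ii)). The only cosmetic difference is the order in the reverse direction---you derive (2.b) first and then use it to get (2.a), whereas the paper derives (2.a) directly by applying the hypothesis twice (once with $s$, once with $s^*$) and then obtains (2.b) by the same substitution $A\mapsto s^*sA$ you use.
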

    \begin{proof}
      Assume that $\mu$ is invariant, hence satisfies conditions (2.a) and (2.b). Since $A \cap s^*s A \subset s^*s A$ we have 
      \begin{equation} 
        \mu\left(A\right) = \mu\left(A \cap s^*s A\right) 
                          = \mu\left(s^*s\left(A \cap s^*s A\right)\right)
                          = \mu\left(s\left(A \cap s^*s A\right)\right). \nonumber
      \end{equation} 
      To show the reverse implication we prove first condition (2.a) which follows from
      \begin{align}
        \mu\left(A\right) = \mu\left(s\left(A \cap s^*s A\right)\right) 
        & = \mu\Big(s^*\big(s\left(A \cap s^*s A\right) \cap ss^*s\left(A \cap s^*s A\right)\big)\Big) \nonumber \\
        & = \mu\left(s^*s \left(A \cap s^*s A\right)\right) = \mu\left(A \cap s^*s A\right) \, , \nonumber
      \end{align}
      where for the last equation we used Lemma~\ref{invsem_lemma_sets}(ii).
      The condition (2.b) follows directly from $\mu\left(A\right) = \mu\left(s\left(A \cap s^*s A\right)\right)$ just by replacing the set $A$ by $s^*sA$.
    \end{proof}

    \subsection{Domain measurable inverse semigroups}
    The characterization of an invariant measure $\mu$ given in Proposition~\ref{invsem_ame_nch} means that $\mu$ is measurable (see Definition~\ref{def:Smeas}) via $s$ but only when the action of $s$ is restricted to its domain (namely $\mu\left(s^*sA\right) = \mu\left(sA\right)$). In addition, the measure of any set $A$ is localized \textit{within the domain of every $s \in S$} (namely $\mu\left(A\right) = \mu\left(A \cap s^*sA\right)$).

    We will show in Theorem~\ref{invsem_th_ame1} that a necessary and sufficient condition for the C$^*$-algebra $\mathcal{R}_S$ to have an amenable trace is the measurability condition on domains given by $\mu\left(s^*sA\right) = \mu\left(sA\right)$. This fact justifies the next definition.
    \begin{definition}\label{def_invsem_dommeas}
      Let $S$ be a countable and discrete inverse semigroup and $A \subset S$ a subset. Then $A$ is \textit{domain measurable} if there is a measure $\mu \colon \mathcal{P}\left(S\right) \rightarrow \left[0, \infty\right]$ such that the following conditions hold:
      \begin{enumerate}
        \item $\mu\left(A\right) = 1$.
        \item $\mu\left(s^*sB\right) = \mu\left(sB\right)$ for all $s \in S$ and $B \subset S$.
      \end{enumerate}
      We say that $S$ is \textit{domain measurable} when the latter holds for $A = S$ and call the corresponding measures {\em domain measures}.
    \end{definition}
    Domain measurable semigroups can be understood as a possible generalization of the amenable groups. To further specify this idea see Theorem~\ref{invsem_th_ame1} and compare it to Theorem~\ref{th_ame_chr}. 

    \begin{remark}
      Recall from~\cite{D16} that a semigroup is called \textit{fairly amenable} if it has a probability measure $\mu$ such that
      \begin{equation}\label{eq:fair}
       \mu\left(A\right) = \mu\left(sA\right)\quad\text{if~$s$~acts~injectively~on~} A\subset S\;.
      \end{equation}
      Observe that if $\mu$ satisfies Eq.~(\ref{eq:fair}), then it satisfies condition (2.b) in Proposition~\ref{invsem_ame_nch} as well, since $s$ acts injectively on $s^*sA$. Therefore, if $S$ is fairly amenable then it is also domain measurable. However, a domain measure satisfying 
      the condition of domain measurability need not satisfy (\ref{eq:fair}). In fact, consider an inverse  semigroup $S$ with a $0$ element and some other element $s \in S$, $s \neq 0$. Then $S$ is domain measurable since it is amenable with an invariant measure $\mu$ satisfying $\mu(\{0\})=1$. This measure, however, cannot implement fair amenability.
    \end{remark}

    \begin{example}\label{invsem_ex_dommeas2}
      We build a class of non-amenable, domain measurable semigroups. Let 
      $A, N$ be disjoint inverse semigroups, with $A$ amenable and $N$ non-amenable. Consider then the semigroup $S = A \sqcup N$, where 
      $an := n =: na$ for every $a \in A$, $n \in N$. It is routine to show $S$ is an inverse semigroup. Furthermore, we claim that it is non-amenable and domain measurable. Indeed, suppose it is amenable and let $\mu$ be an invariant measure on it. For any $n \in N$, $\mu\left(A\right) = \mu\left(n^{-1} A\right) = \mu\left(\emptyset\right) = 0$ and hence $\mu\left(N\right) = 1$. Therefore $\mu$ would restrict to an invariant mean on $N$, contradicting the hypothesis.

      To prove now that $S$ is domain measurable, just choose an invariant measure $\nu$ on $A$ (that exists since $A$ is amenable) and extend it to $S$ as $\widehat{\nu}\left(A' \sqcup N'\right) = \nu\left(A'\right)$, for any $A' \subset A$ and $N' \subset N$. This measure will satisfy $\widehat{\nu}\left(s^*sB\right) = \widehat{\nu}\left(sB\right)$ for every $s \in S$ and $B \subset S$.
    \end{example}

    \subsection{Representations of inverse semigroups}
    Following~\cite{L98}, we define a representation of a unital inverse semigroup $S$ on a (discrete) set $X$ as a unital semigroup homomorphism 
    $\alpha \colon S \to \mathcal{I}(X)$. One can check that any action $\theta$ of $S$ on $X$ gives a representation $\alpha$ of $S$ on $X$ by the rule 
    $$\alpha_s = (\theta_s|_{\theta_{s^*s}(X)} \colon \theta_{s^*s}(X) \to \theta _{ss^*}(X)).$$
    Indeed the domain for $\theta_s|_{\theta_{s^*s}(X)} \circ \theta_t|_{\theta_{t^*t}(X)}$ is 
    $$\theta_{t^*} (\theta_{tt^*}(X) \cap \theta_{s^*s}(X)) = \theta_{t^*}(X) \cap \theta_{t^*s^*s}(X) = \theta_{t^*s^*s}(X) = \theta_{(st)^*(st)}(X).$$
    If $\alpha$ is a representation, we denote by $D_{s^*s}$ the domain of $\alpha_s$. Note that $\alpha_s $ is a bijection from $D_{s^*s}$ onto $D_{ss^*}$, with inverse $\alpha_{s^*}$.

    \begin{definition}\label{def:reptypesemigroup}
      Let $\alpha \colon S\to \mathcal I (X)$ be a representation of the inverse semigroup $S$ on a set $X$. We define the type semigroup $\Typ(\alpha)$ 
      as the commutative monoid generated by symbols $[A]$ with $A\in \mathcal P (X)$ and relations
      \begin{enumerate}
        \item $[\emptyset] = 0$.
        \item $[A] = [\alpha_s(A)]$ if $A\subseteq D_{s^*s}$.
        \item $[A\cup B] = [A] + [B]$ if $A\cap B=\emptyset$. 
      \end{enumerate}
    \end{definition}
    This definition is very natural and allows to easily check if a map from $\Typ(\alpha)$ to another semigroup is a homomorphism. We show next that $\Typ (\alpha)$ is indeed isomorphic to a type semigroup which is constructed based on Tarski's original ideas. 

    \begin{definition}\label{def:secondtypesemigroup}
      Let $\alpha \colon  S\to \mathcal{I} (X)$ be a representation of the inverse semigroup $S$ on a set $X$. We say $A, B \subset X$ are \textit{equidecomposable}, and write $A \sim B$, if there are sets $A_i \subset X$ and elements $s_i \in S$, $i=1,\dots,n$, such that $A_i\subseteq D_{s_i^*s_i}$ for $i=1,\dots , n$, and 
      \begin{equation}
        A = A_1 \sqcup \dots \sqcup A_n \;\; \text{and} \;\; \alpha_{s_1} (A_1) \sqcup \dots \sqcup \alpha_{s_n}(A_n) = B. \nonumber
      \end{equation}
    \end{definition}
    It is routine to show that $\sim$ is an equivalence relation. Indeed, note that since $1 \in S$ we have $A \sim A$. Furthermore the relation $\sim$ is clearly symmetric by choosing $B_i := \alpha_{s_i} (A_i)$ and the dynamics $t_i := s_i^*$. Finally, if $A \sim B \sim C$ then there are $A_i, B_j \subset X$ and $s_i, t_j \in S$ such that $A_i\subseteq D_{s_i^*s_i}$, $B_j\subseteq D_{t_j^*t_j}$, and 
    \begin{align*}
      A & = A_1 \sqcup \dots \sqcup A_n \;\; \text{and} \;\; \alpha_{s_1} (A_1) \sqcup \dots \sqcup \alpha_{s_n} (A_n) = B, \nonumber \\
      B & = B_1 \sqcup \dots \sqcup B_m \;\; \text{and} \;\; \alpha_{t_1} (B_1) \sqcup \dots \sqcup \alpha_{t_m} (B_m) = C \nonumber.
    \end{align*}
    In this case the sets $A_{ij} = \alpha_{s_i^*}(\alpha_{s_i}(A_i) \cap B_j)$ and the elements $r_{ij} = t_js_i$ implement the relation $A \sim C$.

    Given a representation $\alpha \colon S\to \mathcal I (X)$, consider the following extensions:
    \begin{itemize}
      \item The semigroup $S \times \text{Perm}\left(\mathbb{N}\right)$, where $\text{Perm}\left(\mathbb{N}\right)$ is the finite permutation group of $\mathbb{N}$, that is the group of permutations moving only a finite number of elements.
      \item A set $A \subset X \times \mathbb{N}$ is called \textit{bounded} if $A \subset X \times F$, 
      with $F \subset \mathbb{N}$ finite. These sets are sometimes expressed as $A_1 \times \left\{i_1\right\} \sqcup \dots \sqcup A_k \times \left\{i_k\right\}$ and, if so, each $A_j$ is called a \textit{level}.
    \end{itemize}
    Then there is an obvious representation of $S \times \text{Perm}$ on $X \times \mathbb{N}$ given coordinate-wise, which will be also denoted by $\alpha$.  Hence, it makes sense to ask when two bounded sets $A, B \subset X \times \mathbb{N}$ are equidecomposable. Define
    \begin{equation}
      \widehat{X} := \left\{A \subset X \times \mathbb{N} \mid A \; \text{is bounded}\right\} / \sim. \nonumber
    \end{equation}
    This set has the natural structure of a commutative monoid with $0 = \emptyset$ and sum defined as follows. Given two bounded sets $A, B \subset X \times \mathbb{N}$, let $k \in \mathbb{N}$ be such that $A \cap B' = \emptyset$, where $B' := \left\{\left(b, n + k\right) \mid \left(b, n\right) \in B\right\}$. Then define $\left[A\right] + \left[B\right] := \left[A \sqcup B'\right]$. One can verify that $+$ is well-defined, associative and commutative. This construction was first done by Tarski in~\cite{T38}, and has been used since then extensively (see, e.g.,~\cite{TW16,R03,R15,P00}). We only need the notation $\widehat{X}$ temporarily and after the proof of the following result we will only use the symbol $\Typ (\alpha)$.
    \begin{proposition}\label{prop:equality-of-typesemigroups}
      Let $\alpha $ be a representation of the unital inverse semigroup $S$ on $X$. Then the  map
      $$\gamma \colon \Typ (\alpha) \longrightarrow \widehat{X}, \qquad \gamma ([A]) = [A\times \{ 1 \}]$$
      is a monoid isomorphism.
    \end{proposition}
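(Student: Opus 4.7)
My plan is to establish that $\gamma$ is (i) well-defined on generators, (ii) compatible with the monoid operation, (iii) surjective, and (iv) injective. For well-definedness, I would verify that the assignment $[A]\mapsto [A\times\{1\}]$ respects the three defining relations of $\Typ(\alpha)$. Relation (1) is immediate since $\emptyset\times\{1\}=\emptyset$. For (2), if $A\subseteq D_{s^*s}$, then the element $(s,\mathrm{id})\in S\times\text{Perm}(\mathbb{N})$ sends $A\times\{1\}$ bijectively onto $\alpha_s(A)\times\{1\}$, so these classes coincide in $\widehat{X}$. For (3), with $A\cap B=\emptyset$, we have $(A\cup B)\times\{1\}=A\times\{1\}\sqcup B\times\{1\}$, and applying the finitary permutation $\sigma\in\text{Perm}(\mathbb{N})$ that swaps $1$ and $2$ to the piece $B\times\{1\}$ exhibits the equidecomposability with $A\times\{1\}\sqcup B\times\{2\}$, which is precisely the representative of $[A\times\{1\}]+[B\times\{1\}]$ according to the definition of addition in $\widehat{X}$. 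Extending $\gamma$ to a monoid homomorphism is then automatic from the universal property of $\Typ(\alpha)$.

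For surjectivity, given any bounded $C\subset X\times\mathbb{N}$, I would write $C=\bigsqcup_{j=1}^k A_j\times\{i_j\}$ with distinct levels $i_j$. A finitary permutation in $\text{Perm}(\mathbb{N})$ sending each $i_j$ to $j$ and fixing all other integers (possible because only finitely many levels are involved) witnesses $[C]=[\bigsqcup_{j=1}^k A_j\times\{j\}]=\gamma([A_1]+\cdots+[A_k])$ in $\widehat{X}$.

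The main technical step is injectivity. Suppose $\gamma([A])=\gamma([B])$, i.e., $A\times\{1\}\sim B\times\{1\}$ in $X\times\mathbb{N}$ via a decomposition $A\times\{1\}=\bigsqcup_{i=1}^n C_i$ with $C_i\subseteq D_{s_i^*s_i}\times\mathbb{N}$ and $B\times\{1\}=\bigsqcup_{i=1}^n (s_i,\sigma_i)(C_i)$. My key observation is that any such decomposition may be refined so that each piece lies in a single level, simply by replacing each $C_i$ by its intersections $C_i\cap(X\times\{n\})$, $n\in\mathbb{N}$, and reindexing; each refinement piece remains in the domain of the corresponding $(s_i,\sigma_i)$. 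Once this refinement is performed, the constraint that all pieces lie in $A\times\{1\}\subseteq X\times\{1\}$ forces every non-empty piece to have the form $A_i\times\{1\}$ with $A_i\subseteq A$, and the constraint that all images lie in $B\times\{1\}\subseteq X\times\{1\}$ forces $\sigma_i(1)=1$ whenever $A_i\neq\emptyset$. Discarding empty pieces, we obtain an honest decomposition $A=\bigsqcup_i A_i$ with $A_i\subseteq D_{s_i^*s_i}$ and $B=\bigsqcup_i \alpha_{s_i}(A_i)$. Applying the three relations of $\Typ(\alpha)$ then gives $[A]=\sum_i[A_i]=\sum_i[\alpha_{s_i}(A_i)]=[B]$.

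I expect the bookkeeping inside the refinement step of the injectivity argument to be the only slightly delicate point, but no real conceptual difficulty arises since the action of $S\times\text{Perm}(\mathbb{N})$ on $X\times\mathbb{N}$ factors through the two coordinates and permutations in $\text{Perm}(\mathbb{N})$ act as honest bijections, allowing an arbitrarily fine subdivision at the level of $X\times\mathbb{N}$ to be realized by finitely many $S$-relations together with level permutations.
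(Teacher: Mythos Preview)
Your well-definedness and surjectivity arguments are fine and match the paper's approach (the paper is terser, simply noting that $[A\times\{1\}]=[A\times\{i\}]$ in $\widehat{X}$). The refinement idea in your injectivity step---intersecting each piece of an equidecomposition with the individual levels $X\times\{n\}$---is exactly the manoeuvre underlying the paper's proof as well.

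There is, however, a genuine gap in your injectivity argument: you only treat the case $\gamma([A])=\gamma([B])$ for \emph{generators} $[A],[B]$. Injectivity of a monoid homomorphism cannot be verified on generators alone. A general element of $\Typ(\alpha)$ has the form $\sum_{i=1}^n[A_i]$, and $\gamma\bigl(\sum_i[A_i]\bigr)$ is represented by the multi-level set $\bigsqcup_i A_i\times\{i\}$, which need not be equidecomposable with any set of the form $A\times\{1\}$. What you must show is that
\[
\bigsqcup_{i=1}^n A_i\times\{i\}\ \sim\ \bigsqcup_{j=1}^m B_j\times\{j\}
\quad\Longrightarrow\quad
\sum_{i=1}^n[A_i]=\sum_{j=1}^m[B_j]\ \text{in}\ \Typ(\alpha).
\]
The paper handles precisely this: after your single-level refinement, each piece has the form $W_k\times\{n_k\}$ and is carried to $\alpha_{s_k}(W_k)\times\{m_k\}$; grouping the indices $k$ according to the value of $n_k$ gives partitions $A_i=\bigsqcup_{k:n_k=i}W_k$, and then relations (2) and (3) of $\Typ(\alpha)$ yield $\sum_i[A_i]=\sum_k[W_k]=\sum_k[\alpha_{s_k}(W_k)]=\sum_j[B_j]$. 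Your argument is the $n=m=1$ specialisation of this; promote it to arbitrary finite sums and the proof is complete.
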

    \begin{proof}
      Since $[A\times \{ 1 \} ]= [A \times \{ i \}]$ in $\widehat{X}$, we easily see that this map is well-defined and  surjective. To show it is injective, assume that $\gamma  ( \sum_{i=1}^n [A_i]) = \gamma ( \sum_{j=1}^m [B_j])$ for subsets $A_i,B_j$ of $X$. Then
      $$ A:= \bigsqcup_{i=1}^n A_i\times \{ i \} \sim \bigsqcup_{j=1}^m B_j \times \{ j \} =: B,$$
      and so by definition there are subsets  $W_1,\dots , W_l$ of $X$, and numbers $n_1,\dots ,n_l,m_1,\dots , m_l\in \mathbb N$ and elements $s_1,\dots , s_l\in S$ such that $W_k\subseteq D_{s_k^*s_k}$ for $k = 1, \dots, l$ and 
      $$A= \bigsqcup _ {k=1}^l W_k \times \{ n_k \} , \quad B= \bigsqcup_{k=1}^l \alpha_{s_k}(W_k) \times \{ m_k \} .$$
      It follows that there is a partition $\{ 1,\dots , l \} = \bigsqcup_{i=1}^n I_i $ such that for each $i\in \{ 1\dots , n\}$ we have $A_i= \bigsqcup_{j\in I_i} W_j$. We thus get in $\Typ (\alpha)$
      $$\sum _{i=1}^n [A_i] = \sum_{i=1}^n \sum_{j\in I_i} [W_j] = \sum_{i=1}^n \sum_{j\in I_i} [\alpha_{s_j}(W_j)] = \sum_{j=1}^l [\alpha_{s_j}(W_j)] = \sum_{j=1}^m [B_j],$$
      showing injectivity.
    \end{proof}
    For simplicity we will often denote $\alpha_s\left(x\right) \in X$ by $sx$ and $sA$ will stand for $\alpha_s\left(A\right)$ for any $s \in S$, $x \in X$ and $A \subset X$. 
    Recall that $sx$ is defined only if $x \in D_{s^*s}$. We extend next Definition~\ref{def_invsem_dommeas} above to representations.
    
    \begin{definition}\label{invsem_def_dom}
      Let $\alpha \colon S \to \mathcal I (X)$ be a representation of $S$ and let $A \subset X$ be a subset.
      \begin{enumerate}
        \item \label{invsem_def_dommeas} The set $A$ is \textit{$S$-domain measurable} if there is a measure $\mu \colon \mathcal{P}(X) \rightarrow \left[0, \infty\right]$ satisfying the following conditions:
        \begin{enumerate}
          \item $\mu\left( A \right) = 1$.
          \item $\mu\left( B \right) = \mu\left(sB\right)$ for all $s \in S$ and $B \subset D_{s^*s}$.
        \end{enumerate}
        We say that $X$ is \textit{$S$-domain measurable} when the latter holds for $A = X$.
        \item \label{invsem_def_domfol} The set $A$ is \textit{$S$-domain F\o lner} if there is a sequence $\left\{F_n\right\}_{n \in \mathbb{N}}$ of finite, non-empty subsets of $A$ such that
        \begin{equation}
          \frac{\left|s\left(F_n \cap D_{s^*s}\right) \setminus F_n\right|}{\left|F_n\right|} \xrightarrow{n \rightarrow \infty} 0 \nonumber
        \end{equation}
        for all $s \in S$.
        \item \label{invsem_def_dompar} The set $A$ is \textit{$S$-paradoxical} if there are $A_i, B_j \subset X$ and $s_i, t_j \in S$, $i=1,\dots,n$, $j=1,\dots,m$, such that $A_i\subseteq D_{s_i^*s_i}$, $B_j\subseteq D_{t_j^*t_j}$ and
        \begin{align}
          A & = s_1 A_1 \, \sqcup \, \dots \, \sqcup \, s_n A_n = t_1 B_1 \, \sqcup \, \dots \, \sqcup \, t_mB_m \nonumber \\
          & \supset A_1 \, \sqcup \, \dots \, \sqcup \,  A_n \, \sqcup \, B_1 \, \sqcup \, \dots \, \sqcup \,  B_m. \nonumber
        \end{align}
      \end{enumerate}
    \end{definition}

    \begin{remark}
      \begin{itemize}
        \item[(i)] Note that $S$ is \textit{domain measurable} in the sense of Definition~\ref{def_invsem_dommeas} precisely when $S$ is $S$-domain measurable with respect to the canonical representation $\alpha \colon S\to \mathcal I (S)$.
        \item[(ii)] Note also that $A\subset X$ being paradoxical is the same as saying that $2[A] \le [A]$ in $\Typ (\alpha)$.
      \end{itemize}
    \end{remark}

    Recall that, in a commutative semigroup $\mathcal{S}$, we denote by $n \cdot \beta$ the sum $\beta + \dots + \beta$ of $n$ terms. Also, the only (pre-)order that we use on $\mathcal{S}$ is the so-called {\it algebraic pre-order}, defined by $x \le y$ if and only if $x + z = y$ for some $z\in \mathcal{S}$.
    \begin{lemma}\label{invsem_lemma_type}
      Let $\alpha \colon S \to \mathcal I (X)$ be a representation of $S$, and consider the type semigroup $\Typ (\alpha)$ constructed above. Then the following hold:
      \begin{enumerate}
        \item \label{invsem_lemma_type0} For any bounded sets $A, B \subset X \times \mathbb{N}$ if $A \sim B$, then there exists a bijection $\phi \colon A \rightarrow B$ such that for any $C \subset A$ and $D \subset B$ one has $C \sim \phi\left(C\right)$ and $D \sim \phi^{-1}\left(D\right)$.
       \item \label{invsem_lemma_type3} For any $\left[A\right], \left[B\right] \in \Typ (\alpha)$, if $\left[A\right] \leq \left[B\right]$ and $\left[B\right] \leq \left[A\right]$, then $\left[A\right] = \left[B\right]$.
       \item \label{invsem_lemma_type2} A subset $A$ of $X$ is $S$-paradoxical if and only if $\, \left[A \right] = 2 \cdot \left[A \right]$.
       \item \label{invsem_lemma_type4} For any $\left[A\right], \left[B\right] \in \Typ (\alpha) $ and $n \in \mathbb{N}$, if $n \cdot \left[A\right] = n \cdot \left[B\right]$, then $\left[A\right] = \left[B\right]$.
        \item \label{invsem_lemma_type5} If $\left[A\right] \in \Typ (\alpha)$ and $\left(n + 1\right) \cdot \left[A\right] \leq n \cdot \left[A\right]$ for some $n \in \mathbb{N}$, then $\left[A\right] = 2 \cdot \left[A\right]$.
      \end{enumerate}
    \end{lemma}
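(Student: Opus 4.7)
I would establish the five parts in order, with (1) as the foundation from which the others grow.

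Part (1) unpacks directly from the definition of $\sim$: a witness to $A\sim B$ in $\widehat X$ supplies a partition $A=\bigsqcup_k C_k$ and elements $(s_k,\sigma_k)\in S\times\mathrm{Perm}(\mathbb N)$ with $B=\bigsqcup_k \alpha_{(s_k,\sigma_k)}(C_k)$; defining $\phi$ piecewise as $\alpha_{(s_k,\sigma_k)}$ on $C_k$ gives a bijection $A\to B$, and for any $C\subset A$ the refined partition $C=\bigsqcup_k(C\cap C_k)$ together with the same $(s_k,\sigma_k)$ witnesses $C\sim\phi(C)$ (symmetrically for $\phi^{-1}(D)$). Part (2) is then a Banach--Schr\"oder--Bernstein argument in $\widehat X$: the hypotheses $[A]\le[B]$ and $[B]\le[A]$ yield, via (1), bijections $f\colon A\times\{1\}\to f(A\times\{1\})\subset B\times\{1\}$ and $g\colon B\times\{1\}\to g(B\times\{1\})\subset A\times\{1\}$ whose restrictions to \emph{any} subset are themselves equidecomposabilities. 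The standard back-and-forth partition (with $C_0=A\times\{1\}\setminus g(B\times\{1\})$, $C_{n+1}=g(f(C_n))$) produces a bijection $A\times\{1\}\to B\times\{1\}$ whose two pieces are equidecomposabilities by this subset-preserving property, and their disjoint union witnesses $[A]=[B]$.

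For (3), if $A$ is $S$-paradoxical via $(s_i,A_i)$ and $(t_j,B_j)$, relation~(2) of Definition~\ref{def:reptypesemigroup} yields $[A]=\sum_i[A_i]=\sum_j[B_j]$, so $2[A]=\sum_i[A_i]+\sum_j[B_j]\le[A]$ because the $A_i,B_j$ are pairwise disjoint in $A$; with the trivial $[A]\le 2[A]$ and (2), this forces $[A]=2[A]$. Conversely, $[A]=2[A]$ translates through Proposition~\ref{prop:equality-of-typesemigroups} into $A\times\{1\}\sim A\times\{1,2\}$, and in any witness decomposition $A\times\{1\}=\bigsqcup_k D_k\times\{1\}$ splitting the indices by whether $\sigma_k(1)=1$ or $\sigma_k(1)=2$ yields two disjoint families of subsets of $A$ whose $S$-translates each cover $A$, producing the paradoxical decomposition. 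Part (5) reduces to (2) and (4): iterating $(n+1)[A]\le n[A]$ by adding $[A]$ successively gives $2n\cdot[A]\le n\cdot[A]$; the reverse inequality is trivial, so (2) yields $n\cdot[A]=n\cdot(2[A])$, and (4) then delivers $[A]=2[A]$.

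The main obstacle is part (4), the Tarski cancellation theorem. I would first extract the Riesz refinement property from (1): given $\sum_{i=1}^p\alpha_i=\sum_{j=1}^q\beta_j$ in $\Typ(\alpha)$, applying the bijection of (1) to the two realizations of this sum and intersecting the resulting partitions produces a common refinement $\{\gamma_{ij}\}$ with $\alpha_i=\sum_j\gamma_{ij}$ and $\beta_j=\sum_i\gamma_{ij}$. Applied to $n\cdot[A]=n\cdot[B]$ this yields an $n\times n$ array $\{c_{ij}\}$ whose every row sums to $[A]$ and every column to $[B]$. The remaining combinatorics is Tarski's classical induction on $n$ (cf.~\cite{TW16}), using (2) at the antisymmetry steps to collapse the array; this is the only delicate point, as everything else reduces to careful bookkeeping on top of the bijections produced by~(1).
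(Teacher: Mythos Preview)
Your plan is correct and matches the paper's argument closely on parts (1), (2), (3) and (5): the piecewise bijection for (1), the Banach--Schr\"oder--Bernstein construction for (2), the reduction of (3) to (2) and the definition, and the iteration for (5) are exactly what the paper does.

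The one point of divergence is part (4). You set up the Riesz refinement array $\{c_{ij}\}$ and then defer to ``Tarski's classical induction on $n$''. The paper instead makes the combinatorics explicit via K\"onig's Theorem: it builds an $n$-regular bipartite graph whose two vertex classes are the $\approx$-orbits $\{\overline a:a\in A_1\}$ and $\{\overline b:b\in B_1\}$ under the bijections $\phi_j,\psi_j,\chi$ from part (1), finds a perfect matching, and reads off from each matched edge a piece $C_{j,k}\sim D_{j,k}$ assembling into $A\sim B$. Your refinement array is essentially the adjacency data of this same bipartite graph, so the two approaches are close cousins; but the ``induction on $n$'' you allude to is not spelled out, and the standard reference you cite (\cite{TW16}) in fact proves cancellation via the matching argument rather than a bare induction. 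If you intend a genuinely matching-free inductive proof you should indicate the inductive step, since that is where the content lies; otherwise it is cleaner to invoke K\"onig directly, as the paper does.
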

    \begin{proof}
      The proof of this lemma is virtually the same as in the group case (see, e.g. \cite[p.~10]{R02}). For convenience of the reader we include a sketch of the proofs.

      To construct the bijection $\phi \colon A \rightarrow B$ in (\ref{invsem_lemma_type0}) just define it by multiplication by $s_i$ in each of the subsets $A_i$, where $A = \sqcup_i A_i$ and $B = \sqcup_i s_iA_i$.
       
      (2) There are $\left[A_0\right], \left[B_0\right] \in \Typ (\alpha)$ such that $\left[A\right] + \left[A_0\right] = \left[B\right]$ and $\left[B\right] + \left[B_0\right] = \left[A\right]$. In this case without loss of generality we can suppose that $A \cap A_0 = \emptyset = B \cap B_0$. Choose $\phi \colon A \sqcup A_0 \rightarrow B$ and $\psi \colon B \sqcup B_0 \rightarrow A$ as in (\ref{invsem_lemma_type0}) and consider
      \begin{equation}
        C_0 := A_0, \; C_{n + 1} := \psi\left(\phi\left(C_n\right)\right) \; \text{and} \; \, C := \cup_{n = 0}^{\infty} C_n. \nonumber
      \end{equation}
      It then follows that $\left(B \sqcup B_0\right) \setminus \phi\left(C\right) = \psi^{-1}\left(A \setminus C\right) = \psi^{-1}\left(A \sqcup A_0 \setminus C\right)$ and hence
      \begin{equation}
        A \sqcup A_0 = \left(A \setminus C\right) \sqcup C \sim \psi^{-1}\left(A \setminus C\right) \sqcup \phi\left(C\right) = \left(B \sqcup B_0 \setminus \phi\left(C\right)\right) \sqcup \phi\left(C\right) = B \sqcup B_0. \nonumber
      \end{equation}
      Therefore $B \sim A \sqcup A_0 \sim B \sqcup B_0 \sim A$.

      Now (3) follows from the definitions and (2). 
        
      Claim (\ref{invsem_lemma_type4}) uses graph theory and follows from K\"onig's Theorem (see \cite[Theorem~0.2.4]{P00}). If $n \cdot \left[A\right] = n \cdot \left[B\right]$ then there are sets $A_i, B_j$ with the following properties:
      \begin{enumerate}
        \item[(a)] $A_1, \dots, A_n$ are pairwise disjoint, just as $B_1, \dots, B_n$.
        \item[(b)] $n \cdot \left[A\right] = \left[A_1\right] + \dots + \left[A_n\right] = \left[B_1\right] + \dots + \left[B_n\right] = n \cdot \left[B\right]$.
        \item[(c)] For every $i = 1, \dots, n$ we have $A_i \sim A$ and $B_i \sim B$.
      \end{enumerate}
      Consider then the bijections $\phi_j \colon A_1 \rightarrow A_j$, $\psi_j \colon B_1 \rightarrow B_j$ and $\chi \colon n \cdot\left[A\right] \rightarrow n \cdot \left[B\right]$ induced by $\sim$, as in (\ref{invsem_lemma_type0}). For $a \in A_1$ denote by $\overline{a}$ the set $\left\{\phi_1\left(a\right), \dots, \phi_n\left(a\right)\right\}$ (and analogously for $b \in B_1$). Consider now the bipartite graph defined by:
      \begin{itemize}
        \item Its sets of vertices are $X = \left\{\overline{a} \mid a \in A_1\right\}$ and $Y = \left\{\overline{b} \mid b \in B_1\right\}$.
        \item The vertices $\overline{a}$ and $\overline{b}$ are joined by an edge if $\chi\left(\phi_j\left(a\right)\right) \in \overline{b}$ for some $j = 1, \dots, n$.
      \end{itemize}
      Then this graph is $n$-regular and, by K\"onig's Theorem, it has a perfect matching $F$. In this case it can be checked that the sets
      \begin{align}
        C_{j, k} &:= \left\{a \in A_1 \mid \exists b \in B_1 \; \text{such that} \; \left(\overline{a}, \overline{b}\right) \in F \; \text{and} \; \chi\left(\phi_j\left(a\right)\right) = \psi_k\left(b\right)\right\}, \nonumber \\
        D_{j, k} &:= \left\{b \in B_1 \mid \exists a \in A_1 \; \text{such that} \; \left(\overline{a}, \overline{b}\right) \in F \; \text{and} \; \chi\left(\phi_j\left(a\right)\right) = \psi_k\left(b\right)\right\}, \nonumber
      \end{align}
      are respectively a partition of $A_1$ and $B_1$. Furthermore $\psi_k^{-1} \circ \chi \circ \phi_j$ is a bijection from $C_{j, k}$ to $D_{j, k}$ implementing the relations $C_{j, k} \sim D_{j, k}$. These, in turn, implement $A \sim A_1 \sim B_1 \sim B$.

      Finally, (\ref{invsem_lemma_type5}) follows from (\ref{invsem_lemma_type3}) and (\ref{invsem_lemma_type4}). Indeed, from the hypothesis
      \begin{equation}
        2 \cdot \left[A\right] + n \cdot \left[A\right] = \left(n + 1\right) \cdot \left[A\right] + \left[A\right] \leq n \cdot \left[A\right] + \left[A\right] = \left(n + 1\right) \cdot \left[A\right] \leq n \cdot \left[A\right]. \nonumber
      \end{equation}
      Iterating this argument we get $2n \cdot \left[A\right] \leq n \cdot \left[A\right]$ and, since the other inequality trivially holds, $n \cdot \left[A\right] = 2n \cdot \left[A\right]$. Applying (\ref{invsem_lemma_type4}) we conclude that $\left[A\right] = 2 \cdot \left[A\right]$.
    \end{proof}
    
    Finally, we next recall one of Tarski's fundamental results \cite{T38} (see also \cite[Theorem~0.2.10]{R02}).
    \begin{theorem}\label{invsem_th_typetarski}
      Let $\left(\mathcal{S}, +\right)$ be a commutative semigroup with neutral element $0$ and let $\epsilon \in \mathcal{S}$. The following are then equivalent:
      \begin{enumerate}
        \item[(i)] $\left(n + 1\right) \cdot \epsilon \not\leq n \cdot \epsilon$ for all $n \in \mathbb{N}$.
        \item[(ii)] There is a semigroup homomorphism $\nu \colon \left(\mathcal{S}, +\right) \rightarrow \left(\left[0, \infty\right], +\right)$ such that $\nu\left(\epsilon\right) = 1$.
      \end{enumerate}
    \end{theorem}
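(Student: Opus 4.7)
The plan is to prove both directions separately; (ii) $\Rightarrow$ (i) is immediate, while (i) $\Rightarrow$ (ii) is the substantive part.

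For (ii) $\Rightarrow$ (i): Suppose $\nu \colon \mathcal{S} \to [0,\infty]$ is a homomorphism with $\nu(\epsilon) = 1$, and assume toward contradiction that $(n+1)\cdot\epsilon \leq n\cdot\epsilon$ for some $n$. Then there is $\delta \in \mathcal{S}$ with $(n+1)\epsilon + \delta = n\epsilon$, so applying $\nu$ yields $n + 1 + \nu(\delta) = n$, forcing $\nu(\delta) < 0$, contradicting $\nu(\delta) \in [0,\infty]$.

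For (i) $\Rightarrow$ (ii): I would construct $\nu$ via a compactness / Hahn-Banach style argument. Equip $[0,\infty]^{\mathcal{S}}$ with the product topology (compact by Tychonoff), and for each finite $F \subset \mathcal{S}$ containing $\epsilon$, let
\[
  K_F := \left\{ \nu \in [0,\infty]^{\mathcal{S}} \; \mid \; \nu(\epsilon) = 1,\ \nu(a+b) = \nu(a) + \nu(b) \text{ whenever } a,b \in F \text{ and } a+b \in F\right\}.
\]
Each $K_F$ is closed, and $K_{F_1 \cup F_2} \subseteq K_{F_1} \cap K_{F_2}$, so $\{K_F\}$ enjoys the finite intersection property. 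Therefore it suffices to show every individual $K_F$ is non-empty; any element of $\bigcap_F K_F$ is then a homomorphism on all of $\mathcal{S}$ with the required normalization.

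The heart of the proof is the non-emptiness of $K_F$. Define the upper bound
\[
  \nu^*(a) := \inf\left\{ \tfrac{m}{n} \; \mid \; n, m \in \mathbb{N},\ n \geq 1,\ n \cdot a \leq m \cdot \epsilon \right\}, \qquad \inf \emptyset := \infty.
\]
A direct computation shows $\nu^*$ is subadditive, and condition (i) ensures $\nu^*(\epsilon) = 1$: if $n\epsilon \leq m\epsilon$ for $m < n$, then in particular $(m+1)\epsilon \leq m\epsilon$, contradicting (i). Starting from the partial homomorphism $\nu_0(n\epsilon) := n$ on the subsemigroup $\{n\epsilon \mid n \in \mathbb{N}\}$ (well-defined for the same reason), I would apply Zorn's lemma to produce a maximal $\nu^*$-dominated additive extension $\nu \colon T \to [0,\infty]$ on some subsemigroup $T \supseteq \{n\epsilon\}$. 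For any $a \notin T$, one then shows the classical Hahn-Banach inequality
\[
  \sup\left\{ \tfrac{\nu(b) - m}{n} \; \Big| \; n b \leq n \cdot (n a) + m \epsilon \right\} \;\leq\; \inf\left\{ \tfrac{m - \nu(b)}{n} \; \Big| \; n a \leq m\epsilon + b \right\},
\]
so that any value in this non-empty interval produces a proper extension of $\nu$, contradicting maximality; hence $T = \mathcal{S}$. The main obstacle will be this Hahn-Banach inequality: verifying the two sides do not cross requires a careful combination of subadditivity of $\nu^*$ with the full strength of condition (i), which is what prevents the ``infinity'' case from collapsing the construction.
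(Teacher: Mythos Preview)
The paper does not actually prove this theorem: it is stated as a classical result of Tarski and only cited (see the sentence preceding the statement, which refers to \cite{T38} and \cite[Theorem~0.2.10]{R02}). So there is no ``paper's proof'' to compare against; I can only assess your sketch on its own merits.

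Your (ii) $\Rightarrow$ (i) is correct. For (i) $\Rightarrow$ (ii), the overall strategy --- define the subadditive gauge $\nu^*$, verify $\nu^*(\epsilon)=1$ via condition (i), and extend by a Zorn/Hahn--Banach argument --- is exactly the standard route (this is essentially how it is done in Wagon's book and in \cite{R02}). Two issues, however. First, the compactness apparatus with the sets $K_F$ is redundant: once your Zorn argument produces a homomorphism on all of $\mathcal S$, you are done, and conversely the compactness reduction to finite $F$ does not help, since proving $K_F\ne\emptyset$ for a single finite $F$ is not obviously easier than the global problem. Pick one framework and drop the other. Second, and more seriously, the displayed ``Hahn--Banach inequality'' is garbled: the condition $nb\le n\cdot(na)+m\epsilon$ on the left-hand side is not the right shape (why $n\cdot(na)$?), the quantifier over $b\in T$ is missing, and as written the two sides need not even be comparable. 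The correct one-step extension lemma requires, for $a\notin T$, finding $c\in[0,\infty]$ with
\[
\nu(t)\;\le\;\nu^*(t+ka)\,-\,kc\quad\text{for all }t\in T,\ k\ge 0,
\]
or an equivalent reformulation; proving that such a $c$ exists is precisely where condition (i) is used in full, and you have not supplied that argument. Since you yourself flag this as ``the main obstacle'' without resolving it, the proposal is at present a plan rather than a proof.
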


    In order to prove the main result of the section (see Theorem~\ref{invsem_th_ame0}) we need to introduce several actions of the inverse semigroup $S$ on canonical spaces associated with $X$. In particular, 
    we will consider the behavior of domain measures as functionals on $\ell^{\infty}(X)$. Given a representation $\alpha \colon S\to \mathcal I (X)$ we define the action of $S$ on $\ell^{\infty}(X)$ by
    \begin{equation}\label{eq:action-S}
      (s f) (x) :=  \left\{ \begin{array}{lcc}
                      f(s^*x) & \text{if} & x\in D_{ss^*} \\
                      0 & \text{if} & x\notin D_{ss^*}.
                    \end{array} \right. 
    \end{equation}
    The next result establishes an invariance condition in the context of states on $\ell^{\infty}(X)$. 
    \begin{proposition}\label{invsem_lemma_mean}
      Let $\alpha \colon S\to \mathcal I (X)$ be a representation of $S$. If $X$ is domain measurable, with domain measure $\mu$ (cf., Definition~\ref{invsem_def_dom}), then there is a state $m \colon \ell^{\infty}(X) \rightarrow \mathbb{C}$ such that
      \begin{equation}\label{invsem_eq_meandom}
        m\left(s f \right) = m\left(f P_{s^*s}\right), \; \text{for} \; f \in \ell^{\infty}(X) \,,\; s\in S,
      \end{equation}
      where $P_{s^*s}$ denotes the characteristic function of $D_{s^*s} \subset X$.
    \end{proposition}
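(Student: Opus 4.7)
The natural candidate for $m$ is integration against the domain measure $\mu$. Since $\mu$ is a finitely additive probability measure on $X$, one obtains via the standard construction (uniform approximation by simple functions) a positive linear functional
$$m \colon \ell^\infty(X)\to\mathbb{C}, \qquad m(f)=\int_X f\,d\mu,$$
satisfying $m(1)=\mu(X)=1$ and hence a state. It then suffices to verify the invariance condition \eqref{invsem_eq_meandom} for $f=P_B$, the characteristic function of an arbitrary subset $B\subseteq X$, and extend first to simple functions by linearity and then to all of $\ell^\infty(X)$ by continuity.

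The core of the argument is a direct computation of $sP_B$ via \eqref{eq:action-S}. Since $\alpha_{s^*}\colon D_{ss^*}\to D_{s^*s}$ is a bijection, for $x\in D_{ss^*}$ one has $(sP_B)(x)=P_B(s^*x)=1$ if and only if $s^*x\in B\cap D_{s^*s}$, equivalently $x\in s(B\cap D_{s^*s})$. Therefore
$$sP_B \;=\; P_{s(B\cap D_{s^*s})}.$$
Applying $m$ and using that $B\cap D_{s^*s}\subseteq D_{s^*s}$, the domain measurability of $\mu$ (condition (1.b) in Definition~\ref{invsem_def_dom}) yields
$$m(sP_B)=\mu\bigl(s(B\cap D_{s^*s})\bigr)=\mu(B\cap D_{s^*s})=m(P_B\cdot P_{s^*s}),$$
which is precisely the required identity for characteristic functions.

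From here the extension is routine. By linearity, \eqref{invsem_eq_meandom} holds for every simple function $f=\sum_{i=1}^n c_i P_{B_i}$, since the map $f\mapsto sf$ is linear in $f$ by \eqref{eq:action-S}, as is multiplication by $P_{s^*s}$. Finally, any $f\in\ell^\infty(X)$ is a uniform limit of simple functions $f_n$, and both operators $f\mapsto sf$ and $f\mapsto f P_{s^*s}$ are contractive for the supremum norm (the former because it is, up to extension by zero, composition with the bijection $\alpha_{s^*}$ on $D_{ss^*}$). Since $m$ is norm-continuous, passing to the limit in $m(sf_n)=m(f_n P_{s^*s})$ gives the identity for $f$. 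There is no serious obstacle here; the only point requiring care is the bookkeeping of domains, which is handled cleanly by the observation $sP_B=P_{s(B\cap D_{s^*s})}$.
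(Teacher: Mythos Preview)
Your proof is correct and follows essentially the same approach as the paper: define $m$ by integrating against $\mu$ (equivalently, $m(P_B)=\mu(B)$ extended by linearity and continuity), reduce the invariance identity to characteristic functions, compute $sP_B=P_{s(B\cap D_{s^*s})}$, and apply domain measurability. Your write-up is slightly more explicit about the extension steps, but the argument is the same.
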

    \begin{proof}
      For a set $B \subset S$ define $m\left(P_B\right) := \mu\left(B\right)$, where $P_B$ denotes the characteristic function on $B$, and extend the definition by linearity to simple functions and by continuity to all $\ell^{\infty}(X)$. Then $m$ satisfies Eq. (\ref{invsem_eq_meandom}) if and only if it does for any characteristic function $P_B$, and this is a consequence of the domain measurability of $\mu$. Indeed, observe that
      \begin{equation}
        s P_B = P_{s(B \cap D_{s^*s})} \nonumber
      \end{equation}
      and hence, by domain measurability, we obtain
      \begin{equation}
        m\left(s  P_B \right)  = m (P_{s(B \cap D_{s^*s})}) = \mu (s(B \cap D_{s^*s})) = \mu (B \cap D_{s^*s}) = m (P_{B \cap D_{s^*s}})= m\left(P_B P_{s^*s}\right), \nonumber
      \end{equation}
      as claimed.
    \end{proof}

    We next observe that the functional $m$ in the latter proposition can be approximated by functionals of finite support.
    \begin{lemma}\label{invsem_lemma_normapp}
      Let $\alpha \colon S\to \mathcal I (X)$ be a representation of $S$. If $X$ is domain measurable then for every $\varepsilon > 0$ and finite $\mathcal{F} \subset S$ there is a positive $h \in \ell^1(X)$ of norm one such that $\left|\left|s^*h - s^*sh\right|\right|_1 < \varepsilon$ for every $s \in \mathcal{F}$. Furthermore, the support of $h$ is finite.
    \end{lemma}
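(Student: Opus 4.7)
The strategy is a classical Day--Namioka convexity argument: transfer the invariance of the state $m$ provided by Proposition~\ref{invsem_lemma_mean} to approximate invariance in $\ell^1(X)$, working from the outset with finitely supported functions. As a first step I would introduce the convex set $P_{\mathrm{fin}}(X) \subset \ell^1(X)$ of positive, finitely supported functions of $\ell^1$-norm one. Since convex combinations of point masses are weak-$*$ dense in the state space of $\ell^\infty(X)$, so is $P_{\mathrm{fin}}(X)$, and hence there is a net $(h_\lambda)_\lambda \subset P_{\mathrm{fin}}(X)$ with $h_\lambda \to m$ in the weak-$*$ topology.

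Next I would make explicit the duality linking Eq.~(\ref{eq:action-S}) with a natural ``push-forward'' action on $\ell^1(X)$ given by $(s^*h)(y) := h(sy)$ for $y \in D_{s^*s}$ and $0$ otherwise, and $(s^*sh)(y) := h(y)P_{s^*s}(y)$. A one-line reindexing using the bijection $\alpha_s \colon D_{s^*s} \to D_{ss^*}$ yields
$$\langle sf, h\rangle \;=\; \langle f, s^*h\rangle \qquad \text{and} \qquad \langle fP_{s^*s}, h\rangle \;=\; \langle f, s^*sh\rangle,$$
and also shows that $h \mapsto s^*h$ is a contraction on $\ell^1(X)$. Combining this with the invariance relation (\ref{invsem_eq_meandom}) of $m$, for every $s \in S$ and $f \in \ell^\infty(X)$ I obtain
$$\langle f, s^*h_\lambda - s^*sh_\lambda\rangle \;=\; \langle sf - fP_{s^*s}, h_\lambda\rangle \;\longrightarrow\; m(sf) - m(fP_{s^*s}) \;=\; 0,$$
so that $s^*h_\lambda - s^*sh_\lambda \to 0$ weakly in $\ell^1(X)$ for every $s \in S$.

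Finally, given $\varepsilon > 0$ and finite $\mathcal{F} = \{s_1, \dots, s_k\} \subset S$, I would form the convex subset
$$C \;:=\; \bigl\{\bigl(s_i^*h - s_i^*s_ih\bigr)_{i=1}^k \; : \; h \in P_{\mathrm{fin}}(X)\bigr\} \;\subset\; \ell^1(X)^k.$$
The previous paragraph shows that $0$ lies in the weak closure of $C$; by Mazur's theorem the weak and norm closures of a convex set coincide, so $0$ is in the norm closure of $C$, producing the desired $h \in P_{\mathrm{fin}}(X)$. The only non-routine point is the second step: one has to verify that Eq.~(\ref{eq:action-S}) respects the partial domains exactly in the way needed so that the reindexing across the bijection $\alpha_s$ is legitimate and matches the invariance property of $m$ with the $\ell^1$-norm estimate being sought; the rest of the argument is entirely standard functional analysis.
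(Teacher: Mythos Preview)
Your argument is correct and is essentially the same Day--Namioka convexity argument the paper gives: approximate the state $m$ from Proposition~\ref{invsem_lemma_mean} by finitely supported probability densities in the weak-$*$ topology, use the duality $\langle sf,h\rangle=\langle f,s^*h\rangle$ to obtain weak convergence $s^*h_\lambda-s^*sh_\lambda\to 0$ in $\ell^1(X)$, and then invoke Mazur's theorem on the convex image to upgrade to norm convergence. The only cosmetic difference is that the paper works in the full product $(\ell^1(X))^S$ rather than the finite product $\ell^1(X)^k$, which is immaterial for the statement at hand.
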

    \begin{proof}
      We denote by $\Omega$ the set of positive $h \in \ell^1(X)$ of norm one and finite support.
      By Proposition~\ref{invsem_lemma_mean} there is a functional $m \colon \ell^{\infty}(X) \rightarrow \mathbb{C}$ such that $m\left(sf\right) = m\left(s^*sf\right)$ for every $s \in S, f \in \ell^{\infty}(X)$. Since the normal states are weak-* dense in $\left(\ell^{\infty}(X)\right)^*$ 
      there is a net $\left\{h_{\lambda}\right\}_{\lambda \in \Lambda}$ in $\Omega$ such that
      \begin{equation}\label{eq_lemma_normapp}
        \left|\phi_{h_{\lambda}}\left(sf\right) - \phi_{h_{\lambda}}\left(s^*sf\right)\right| = \left|\phi_{s^*h_{\lambda}}\left(f\right) - \phi_{s^*sh_{\lambda}}\left(f\right)\right| \rightarrow 0, 
      \end{equation}
      where $\phi_h\left(f\right) = \sum_{x \in X} h\left(x\right) f\left(x\right)$ and $h \in \Omega, f \in \ell^{\infty}(X)$. In order to transform the latter weak convergence to norm convergence, we shall use a variation of a standard technique (see \cite{D57,N65,C76}). 
      Consider the space $E = \left(\ell^1(X)\right)^S$, which, when equipped with the product topology, is a locally convex linear topological space. Consider the map
      \begin{equation}
        T \colon \ell^1(X) \rightarrow E, \quad h \mapsto T\left(h\right) = \left(s^*h - s^*sh\right)_{s \in S}. \nonumber
      \end{equation}
      Since the weak topology coincides with the product of weak topologies on $E$, it follows from Eq.~(\ref{eq_lemma_normapp}) that $0$ belongs to the weak closure of $T(\Omega)$. 
      Furthermore, since $E$ is locally convex and $T(\Omega)$ is convex, its closure in the weak topology and in the product of the norm topologies are the same. Thus we may suppose that the net $\left\{h_{\lambda}\right\}_{\lambda \in \Lambda}$ actually satisfies that 
      $\left|\left|s^*h_\lambda - s^*sh_\lambda\right|\right|_1 \rightarrow 0$ for all $s \in S$, which completes the proof.
    \end{proof}

    The following lemma is straightforward to check, but we mention it for convenience of the reader.
    \begin{lemma}\label{invsem_lemma_hshape}
      Let $X$ be a set. Any $h \in \ell^1(X)_+$ of norm $1$ and finite support can be written as
      \begin{equation}
        h = \left(\beta_1/\left|A_1\right|\right) P_{A_1} + \dots + \left(\beta_N/\left|A_N\right|\right) P_{A_N} \nonumber
      \end{equation}
      for some finite $A_1 \supset A_2 \supset \dots \supset A_N$, where $\beta_i \geq 0$ and $\sum_{i = 1}^N \beta_i = 1$.
    \end{lemma}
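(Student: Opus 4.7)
The plan is to use the classical \emph{layer cake} decomposition of $h$ by its level sets. Since $h$ is a positive function on a finite support, it takes only finitely many distinct positive values; order them as $0 < c_1 < c_2 < \dots < c_N$. Then set
\[
  A_i := \{x \in X \mid h(x) \geq c_i\}, \qquad i = 1, \dots, N,
\]
which are automatically finite (as they are contained in $\mathrm{supp}(h)$) and nested as $A_1 \supset A_2 \supset \dots \supset A_N$ by construction.

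Next I would check the pointwise identity
\[
  h = \sum_{i=1}^N (c_i - c_{i-1}) P_{A_i},
\]
with the convention $c_0 = 0$. Indeed, if $x \in X$ with $h(x) = c_k$, then $x \in A_i$ iff $i \le k$, so the right-hand side at $x$ equals $\sum_{i=1}^k (c_i - c_{i-1}) = c_k = h(x)$, and for $x \notin \mathrm{supp}(h)$ both sides are zero. Now define
\[
  \beta_i := (c_i - c_{i-1}) \, |A_i| \geq 0,
\]
so that $h = \sum_{i=1}^N (\beta_i / |A_i|) P_{A_i}$, matching the required form.

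Finally, I would verify $\sum_i \beta_i = 1$. A direct rearrangement (Abel summation) gives
\[
  \sum_{i=1}^N (c_i - c_{i-1}) |A_i| = \sum_{i=1}^N c_i \bigl(|A_i| - |A_{i+1}|\bigr) = \sum_{i=1}^N c_i \, \bigl|\{x \mid h(x) = c_i\}\bigr| = \|h\|_1 = 1,
\]
with $A_{N+1} := \emptyset$. This closes the argument. There is no real obstacle here: the statement is essentially a bookkeeping identity, and the only thing to keep track of is the ordering of the distinct values of $h$ so that the sets $A_i$ are automatically nested in the claimed way.
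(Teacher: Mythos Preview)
Your proof is correct and is essentially identical to the paper's own argument: both order the distinct positive values of $h$, form the nested level sets $A_i=\{x:h(x)\ge c_i\}$, write $h=\sum_i(c_i-c_{i-1})P_{A_i}$, and set $\beta_i=(c_i-c_{i-1})|A_i|$. You give a bit more detail (the Abel summation) than the paper, which simply notes that $\|h\|_1=1$ forces $\sum_i\beta_i=1$.
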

    \begin{proof}
      Let $0 =: a_0 < a_1 < \dots < a_N$ be the distinct values of the function $h$. Then, defining $A_i := \left\{x \in X \mid a_i \leq h\left(x\right)\right\}$ we have that $A_1 \supset A_2 \supset \dots \supset A_N$. Furthermore $h = \sum_{i = 1}^N \gamma_i P_{A_i}$, where $\gamma_i = a_i - a_{i - 1}$ for $i \geq 1$. To conclude the proof put $\beta_i := \gamma_i \left|A_i\right|$, $i = 1, \dots, N$, and note that $\|h\|_1=1$ implies $\sum_{i = 1}^N \beta_i = 1$.
    \end{proof}

    \begin{lemma}\label{invsem_lemma_schi}
      Let $\alpha \colon S \rightarrow \mathcal{I}(X)$ be a representation of $S$ and consider $s \in S$, $A \subset X$. Then $\left(s^* P_A - s^*s P_A\right)\left(x\right) < 0$ if and only if $x \in A \cap D_{s^*s} \setminus s^*\left(A \cap D_{ss^*}\right)$.
    \end{lemma}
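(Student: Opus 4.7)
The proof will be a direct unpacking of the definitions followed by a case analysis on whether $x\in D_{s^*s}$. The plan is as follows.

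First, I will compute the two functions explicitly using Eq.~(\ref{eq:action-S}). Specifically, applying the formula with $s$ replaced by $s^*$ and by $s^*s$, one gets
\[
(s^* P_A)(x) = \begin{cases} P_A(sx) & \text{if } x\in D_{s^*s},\\ 0 & \text{otherwise,}\end{cases}
\qquad
(s^*s P_A)(x) = \begin{cases} P_A(x) & \text{if } x\in D_{s^*s},\\ 0 & \text{otherwise,}\end{cases}
\]
using that $(s^*)(s^*)^* = s^*s$, that $s^*s$ is an idempotent with domain $D_{s^*s}$, and that $s^*s$ acts as the identity on $D_{s^*s}$.

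Next I handle the two cases. If $x \notin D_{s^*s}$, both terms vanish, so the difference is zero, not negative. If $x\in D_{s^*s}$, the difference equals $P_A(sx) - P_A(x)$, which is strictly negative precisely when $P_A(x)=1$ and $P_A(sx)=0$, i.e., when $x\in A$ and $sx\notin A$.

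Finally, I will rewrite the condition $sx\notin A$ in terms of the target set $A \cap D_{ss^*}$. Since $\alpha_s$ is a bijection from $D_{s^*s}$ onto $D_{ss^*}$ with inverse $\alpha_{s^*}$, we have $sx \in D_{ss^*}$ and $s^*(sx) = x$. Hence $sx \in A$ if and only if $sx \in A \cap D_{ss^*}$, which in turn is equivalent to $x \in s^*(A \cap D_{ss^*})$. Combining this with the previous step, the negativity condition reduces exactly to $x \in (A \cap D_{s^*s}) \setminus s^*(A \cap D_{ss^*})$, as claimed. There is no essential obstacle here; the argument is a careful bookkeeping exercise with partial domains, and the only point worth double-checking is the interplay between the support restriction $D_{ss^*}$ on the target side and the formula on the source side.
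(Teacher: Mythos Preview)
Your proof is correct and follows essentially the same approach as the paper's own proof: both compute $(s^*P_A - s^*sP_A)(x)$ directly from the action formula \eqref{eq:action-S}, observe that the difference is $-1$ exactly when $x\in A\cap D_{s^*s}$ and $sx\notin A$, and then identify this with the stated set. Your write-up is slightly more explicit in justifying the equivalence $sx\notin A \Leftrightarrow x\notin s^*(A\cap D_{ss^*})$ via the bijection $\alpha_s\colon D_{s^*s}\to D_{ss^*}$, whereas the paper leaves this implicit.
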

    \begin{proof}
    By definition of the action given in Eq.~(\ref{eq:action-S}) we compute
    \begin{equation}
        \left(s^* P_A - s^*s P_A\right) \left(x\right) = \left\{
            \begin{array}{ccc}
              1 & \text{if} & x \in D_{s^*s} \setminus A \; \text{and} \; sx \in A \\
              -1  & \text{if} & x \in A \cap D_{s^*s} \; \text{and} \; sx \not\in A \\
              0 &  & \text{otherwise}.
            \end{array}
          \right. \nonumber
      \end{equation}
      Thus, if $\left(s^* P_A - s^*s P_A\right) \left(x\right) < 0$ then $x \in A \cap D_{s^*s} \setminus s^*\left(A \cap D_{ss^*}\right)$. The other 
      implication is clear.
    \end{proof}

    We can finally establish the main theorem of the section, characterizing the domain measurable representations of an inverse semigroup.
    \begin{theorem}\label{invsem_th_ame0}
      Let $S$ be a countable and discrete inverse semigroup with identity $1 \in S$ and $\alpha \colon S\to \mathcal I (X)$ be a representation of $S$ on $X$. The following are then equivalent:
      \begin{enumerate}
        \item \label{invsem_th_ame0_mea} $X$ is $S$-domain measurable.
        \item \label{invsem_th_ame0_par} $X$ is not $S$-paradoxical.
        \item \label{invsem_th_ame0_fol} $X$ is $S$-domain F\o lner.
      \end{enumerate}
    \end{theorem}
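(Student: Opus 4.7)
The plan is to prove the equivalence via the cycle $(1) \Leftrightarrow (2)$ through Tarski's Theorem~\ref{invsem_th_typetarski} applied to the type semigroup $\Typ(\alpha)$, together with the cycle $(1) \Leftrightarrow (3)$ via an approximation argument in the spirit of Day, based on the tools already assembled in Proposition~\ref{invsem_lemma_mean} and Lemmas~\ref{invsem_lemma_normapp}, \ref{invsem_lemma_hshape}, and~\ref{invsem_lemma_schi}.

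For $(1) \Leftrightarrow (2)$, the three defining relations of $\Typ(\alpha)$ match exactly the three requirements that a set function $\mu \colon \mathcal{P}(X) \to [0,\infty]$ be trivial on the empty set, domain invariant and finitely additive. Consequently, a domain measure on $X$ is the same datum as a monoid homomorphism $\nu \colon \Typ(\alpha) \to ([0,\infty],+)$ with $\nu([X]) = 1$. By Lemma~\ref{invsem_lemma_type}(\ref{invsem_lemma_type2}) and~(\ref{invsem_lemma_type5}), $X$ is $S$-paradoxical precisely when $[X] = 2 \cdot [X]$, which in turn is equivalent to $(n+1)\cdot[X] \leq n\cdot[X]$ for some $n \in \mathbb{N}$. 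Theorem~\ref{invsem_th_typetarski} then gives that such a homomorphism $\nu$ exists if and only if no such $n$ exists, closing this equivalence.

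For $(3) \Rightarrow (1)$, pick a free ultrafilter $\omega$ on $\mathbb{N}$ and define $\mu(B) := \lim_\omega |B \cap F_n|/|F_n|$ for $B \subseteq X$. Given $B \subseteq D_{s^*s}$, $\alpha_s$ bijects $B$ with $sB \subseteq D_{ss^*}$, so $|sB \cap F_n| = |B \cap s^*F_n|$. The F\o lner conditions for $s$ and $s^*$ (both available, since the quantifier in Definition~\ref{invsem_def_dom}(\ref{invsem_def_domfol}) runs over all of $S$) control the two halves of the symmetric difference $(F_n \cap D_{s^*s}) \triangle s^*(F_n \cap D_{ss^*})$, so that $\bigl||B \cap F_n| - |B \cap s^*F_n|\bigr| = o(|F_n|)$. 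Hence $\mu(sB) = \mu(B)$ and $\mu$ is a domain measure on $X$.

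For the remaining direction $(1) \Rightarrow (3)$, given $\varepsilon > 0$ and a finite $\mathcal{F} \subset S$ (enlarged to be closed under $\ast$), Proposition~\ref{invsem_lemma_mean} and Lemma~\ref{invsem_lemma_normapp} produce a positive $h \in \ell^1(X)$ of norm one and finite support with $\|s^*h - s^*sh\|_1 < \varepsilon$ for every $s \in \mathcal{F}$. Decompose $h = \sum_{i=1}^N (\beta_i/|A_i|) P_{A_i}$ as in Lemma~\ref{invsem_lemma_hshape}, with $A_1 \supseteq \cdots \supseteq A_N$ and $\sum_i \beta_i = 1$. Since $s^* P_{A_i} = P_{s^*(A_i \cap D_{ss^*})}$ and $s^*s P_{A_i} = P_{A_i \cap D_{s^*s}}$, and both families are nested in $i$, Lemma~\ref{invsem_lemma_schi} forces the contributions to $(s^*h - s^*sh)(x)$ from the different levels to share a common sign at each $x$, yielding the exact identity
\begin{equation*}
  \|s^*h - s^*sh\|_1 \;=\; \sum_{i=1}^N \frac{\beta_i}{|A_i|} \bigl| s^*(A_i \cap D_{ss^*}) \triangle (A_i \cap D_{s^*s}) \bigr|.
\end{equation*}
The bijection $\alpha_s \colon D_{s^*s} \to D_{ss^*}$ equates $|(A_i \cap D_{s^*s}) \setminus s^*(A_i \cap D_{ss^*})|$ with $|s(A_i \cap D_{s^*s}) \setminus A_i|$; summing the identity over $s \in \mathcal{F}$ and invoking $\sum_i \beta_i = 1$, a pigeonhole argument produces an index $i$ for which $|s(A_i \cap D_{s^*s}) \setminus A_i|/|A_i| < |\mathcal{F}|\varepsilon$ simultaneously for every $s \in \mathcal{F}$. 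Diagonalizing over an exhaustion $\mathcal{F}_n \nearrow S$ with $\varepsilon_n \to 0$ then yields an $S$-domain F\o lner sequence. The main obstacle is this last implication: the delicate step is the exact equality (rather than a mere triangle-inequality bound) for $\|s^*h - s^*sh\|_1$ in terms of the nested level sets, which is what allows a single level set $A_i$ to serve as an F\o lner set for all of $\mathcal{F}$ at once.
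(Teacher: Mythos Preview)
Your proof is correct and follows the same overall architecture as the paper: the equivalence $(1)\Leftrightarrow(2)$ via the type semigroup and Tarski's theorem, and $(3)\Rightarrow(1)$ via an ultralimit of normalized counting measures, are handled essentially identically. The one genuine difference lies in $(1)\Rightarrow(3)$.

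The paper, after decomposing $h$ into level sets $A_1\supseteq\cdots\supseteq A_N$, only bounds $\|s^*h-s^*sh\|_1$ from below by the nonnegative part of $s^*h-s^*sh$, obtaining the one-sided inequality
\[
  \frac{\varepsilon}{|\mathcal F|}>\sum_i\beta_i\,\frac{|s^*(A_i\cap D_{ss^*})\setminus A_i|}{|A_i|},
\]
and then runs Namioka's measure-theoretic argument with the sets $K_s$ to find a common good index. You instead observe that, because both families $T_i=s^*(A_i\cap D_{ss^*})$ and $Z_i=A_i\cap D_{s^*s}$ are nested, for each fixed $x$ the quantities $\mathds 1_{T_i}(x)-\mathds 1_{Z_i}(x)$ never change sign as $i$ varies (indeed, with $t(x)=\max\{i:x\in T_i\}$ and $z(x)=\max\{i:x\in Z_i\}$ the sign is determined by the order of $t(x)$ and $z(x)$). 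This yields the exact layer-cake identity
\[
  \|s^*h-s^*sh\|_1=\sum_i\frac{\beta_i}{|A_i|}\,\bigl|T_i\,\triangle\,Z_i\bigr|,
\]
after which a direct convex-combination (pigeonhole) argument over $i$ replaces the $K_s$ machinery. Your route is a little shorter and gives the two-sided symmetric-difference control for free; the paper's route is closer to Namioka's original template and makes explicit why only the one-sided bound is actually needed. Either way the conclusion is the same.
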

    \begin{proof}
      (\ref{invsem_th_ame0_mea}) $\Rightarrow$ (\ref{invsem_th_ame0_par}). Suppose $X$ is $S$-paradoxical. Then, choosing a domain measure $\mu$ 
       and an $S$-paradoxical decomposition of $X$ we would have
      \begin{align}
        1 = \mu\left( X \right) & \geq \mu\left( A_1\right) + \dots + \mu\left( A_n\right) + \mu\left( B_1\right) + \dots + \mu\left( B_m\right) \nonumber \\
        & = \mu\big(s_1  A_1 \sqcup \dots \sqcup s_n A_n\big) + \mu\big(t_1 B_1 \sqcup \dots \sqcup t_m B_m\big) = \mu\left(X\right) + \mu\left(X\right) = 2, \nonumber
      \end{align}
      which gives a contradiction.

      (\ref{invsem_th_ame0_par}) $\Rightarrow$ (\ref{invsem_th_ame0_mea}). Consider the type semigroup $\Typ (\alpha)$ of the action. As $X$ is not $S$-paradoxical we know that $[X]$ and $2 \cdot \left[ X \right]$ are not equal in $\Typ (\alpha)$. It follows from Lemma~\ref{invsem_lemma_type}(5) that $\left(n + 1\right) \cdot \left[ X\right] \not\leq n \cdot \left[ X \right]$ and hence, by Tarski's Theorem~\ref{invsem_th_typetarski}, there exists a semigroup homomorphism $\nu \colon \Typ (\alpha) \rightarrow \left[0, \infty\right]$ such that $\nu\left(\left[ X \right]\right) = 1$. Then we define $\mu\left(B\right) := \nu\left(\left[B \right]\right)$ which satisfies $\mu\left(X\right) = 1$ and $\mu\left(B\right) = \mu\left(s B\right)$ for every $B \subset D_{s^*s}$, proving that $X$ is $S$-domain measurable.

      (\ref{invsem_th_ame0_fol}) $\Rightarrow$ (\ref{invsem_th_ame0_mea}). Let $\left\{F_n\right\}_{n \in \mathbb{N}}$ be a sequence witnessing the 
      $S$-domain F\o lner property of $X$ and let $\omega$ be a free ultrafilter on $\mathbb{N}$. Consider the measure $\mu$ defined by
      \begin{equation}
        \mu\left(B\right) := \lim_{n \rightarrow \omega} \left|B \cap F_n\right|/\left|F_n\right|. \nonumber
      \end{equation}
      It follows from $\omega$ being an ultrafilter that $\mu$ is a finitely additive measure. Thus
      it remains to prove that $\mu\left(B\right) = \mu\left(s B\right)$ for any $B \subset D_{s^*s}$. Observe first that $s$ acts injectively on $B$. 
      Therefore we have
      \begin{align}
        \left|B \cap F_n\right| = \left|s\left(B \cap F_n\right)\right| & \leq \left|sB \cap s\left(F_n \cap D_{s^*s}\right) \cap F_n\right| + \left|\left(sB \cap s\left(F_n \cap D_{s^*s}\right)\right) \setminus F_n\right| \nonumber \\
        & \leq \left|sB \cap F_n\right| + \left|s\left(F_n \cap D_{s^*s}\right) \setminus F_n\right|, \nonumber
      \end{align}
      and hence, normalizing by $\left|F_n\right|$ and taking ultralimits on both sides, we obtain $\mu\left(B\right) \leq \mu\left(s B\right)$. The other inequality follows from a similar argument, noting that
      \begin{equation}
        \left|s^*\left(sB \cap F_n\right)\right| = \left|sB \cap F_n\right| \nonumber
      \end{equation}
      since $s^*$ acts injectively on $sB$.

      (\ref{invsem_th_ame0_mea}) $\Rightarrow$ (\ref{invsem_th_ame0_fol}). To prove this implication we will refine Namioka's trick (see~\cite[Theorem~3.5]{N65}). By Lemma~\ref{invsem_lemma_normapp} and the fact that $X$ is domain measurable we conclude that for every $\varepsilon > 0$ and finite $\mathcal{F} \subset S$ 
      (which we assume to be symmetric, i.e., $\mathcal{F}=\mathcal{F}^*$) there is a positive function $h \in \ell^1(X)$ of norm $1$ and with finite support such that 
      $\left|\left|s^* h - s^*s h\right|\right|_1 < \varepsilon/\left|\mathcal{F}\right|$ for all $s \in \mathcal{F}$. Moreover, by Lemma~\ref{invsem_lemma_hshape}, we may express the function $h$ as a linear combination
      \begin{equation}
        h = \frac{\beta_1}{\left|A_1\right|} P_{A_1} + \dots + \frac{\beta_N}{\left|A_N\right|} P_{A_N}, \quad \text{where} \; \; A_1 \supset A_2 \supset \dots \supset A_N \; \text{and} \; \sum_{i = 1}^N \beta_i = 1. \nonumber
      \end{equation}

      Consider now the set $B_s := \cup_{i = 1}^N \left( A_i \cap D_{s^*s}\right) \setminus s^*\left(A_i \cap D_{ss^*}\right)$. By Lemma~\ref{invsem_lemma_schi}, the function $s^*h - s^*sh$ is non-negative on $X \setminus B_s$ and hence
      \begin{align}
        \frac{\varepsilon}{\left|\mathcal{F}\right|} > \left|\left|s^*h - s^*sh\right|\right|_1 
          & \geq \sum_{x \in X \setminus B_s} s^*h\left(x\right) - s^*sh \left(x\right) 
                 = \sum_{i = 1}^N \frac{\beta_i}{\left|A_i\right|} \left(\sum_{x \in X \setminus B_s}\Big( s^* P_{A_i}\left(x\right) 
                   - s^*s P_{A_i}\left(x\right)\Big)\right) \nonumber \\
          & \geq \sum_{i = 1}^N \frac{\beta_i}{\left|A_i\right|} \left(\sum_{x \in s^*\left(A_i \cap D_{ss^*}\right) \setminus A_i \cap D_{s^*s}}     \Big(s^*P_{A_i}\left(x\right) - s^*s P_{A_i}\left(x\right)\Big)\right) \nonumber \\
          & = \sum_{i = 1}^N \beta_i \frac{\left|s^*\left(A_i \cap D_{ss^*}\right) \setminus A_i \cap D_{s^*s} \right|}{\left|A_i\right|} 
            = \sum_{i = 1}^N \beta_i \frac{\left|s^*\left(A_i \cap D_{ss^*}\right) \setminus A_i \right|}{\left|A_i\right|}. \label{eq_h_betai}
      \end{align}
      Observe that the last inequality follows from the fact that the sets $A_i$ are nested. Indeed, writing $Z_i = A_i \cap D_{s^*s}$ and $T_i= s^*(A_i \cap D_{ss^*})$ 
      and denoting by $Y^c$ the complement in $X$ of a subset $Y$, 
      we need to show that 
      \begin{equation}
      \label{eq:inclusion-in-meet}
     Z_i^c\cap T_i \subset B_s^c =\cap_{j=1}^N (Z_j^c\cup T_j).
      \end{equation}
      Now, if $i \geq j$ then $A_i \subset A_j$ and so $T_i\subset T_j$ which implies that $Z_i^c\cap T_i \subset Z_j^c\cup T_j$. If $i < j$ then $A_j \subset A_i$ and so $Z_j\subset Z_i$ which implies $Z_i^c\subset Z_j^c$ and so
      $Z_i^c\cap T_i\subset Z_j^c\cup T_j$. This shows \eqref{eq:inclusion-in-meet}. The rest of the proof is similar to \cite{N65}. Denote by $I = \left\{1, \dots, N\right\}$ and consider the measure on $I$ given by 
      $\mu\left(J\right) = \sum_{j \in J} \beta_j$ for every $J \subset I$ and put $\mu\left(\emptyset\right) := 0$. For $s \in \mathcal{F}$ consider the set
      \begin{equation}
        K_s := \left\{i \in I \mid \left|s\left(A_i \cap D_{s^*s}\right) \setminus A_i\right| < \varepsilon \left|A_i\right| \right\}. \nonumber
      \end{equation}
      From Eq.~(\ref{eq_h_betai}) it follows that
      \begin{equation}
        \varepsilon/\left|\mathcal{F}\right| > \sum_{i = 1}^N \beta_i \left|s^*\left(A_i \cap D_{ss^*}\right) \setminus A_i \right|/\left|A_i\right| 
                                             \geq \varepsilon \sum_{i \in I \setminus K_{s^*}} \beta_i = \varepsilon \, \mu\left(I \setminus K_{s^*}\right) \nonumber
      \end{equation}
      and, thus, $\mu\left(I \setminus K_{s^*}\right) < 1/\left|\mathcal{F}\right|$. From this and since $\mathcal{F}=\mathcal{F}^*$ we obtain
      \begin{equation}
        1 - \mu\left(\cap_{s \in \mathcal{F}} K_s\right) = \mu\left(I \setminus \cap_{s \in \mathcal{F}} K_s\right) = \mu\left(\cup_{s \in \mathcal{F}} I \setminus K_s\right) \leq \sum_{s \in \mathcal{F}} \mu\left(I \setminus K_s\right) < 1. \nonumber
      \end{equation}
      Therefore the set $\cap_{s \in \mathcal{F}} K_s$ is not empty since its measure is non-zero; for any index $i_0 \in \cap_{s \in \mathcal{F}} K_s$ 
      we will have that the corresponding set $A_{i_0}$ satisfies the domain F\o lner condition.
      \end{proof}

      \begin{remark}
       We mention here that the theory of type semigroups for representations of inverse semigroups includes the corresponding theory for partial actions of groups. 
       Given a (discrete) group $G$ and a non-empty set $X$, Exel defines the notion of a partial action of $G$ on $X$ (see, e.g., \cite{E17}). 
       In this context one can associate in a natural way the type semigroup $\mathrm{Typ} (X,G)$ to the given partial action (see e.g. \cite[Section 7]{AE14}). 
       Moreover, in \cite{Exel98} Exel associates
       to each group $G$ an inverse semigroup $S(G)$ such that the partial actions of $G$ on $X$ are in bijective correspondence with the representations  
       $\alpha\colon S(G)\to \mathcal{I}(X)$. (Note that representations of inverse semigroups are called {\it actions} in \cite{Exel98}.) 
       In this context, it can be shown, using the abstract definitions of these semigroups, that the type semigroup $\mathrm{Typ}(\alpha)$ introduced in 
       Definition~\ref{def:reptypesemigroup} is naturally isomorphic to the type semigroup 
       $\mathrm{Typ} (X,G)$ of the corresponding partial action of $G$ on $X$. 
      \end{remark}
      
    \subsection{Amenable inverse semigroups}\label{sec_ameinvsem}
    The goal of this section is to prove the analogue of Theorem~\ref{invsem_th_ame0} but considering amenable representations instead of
    the weaker notion of domain measurable ones. 
    Therefore we will have to refine the reasoning of the previous section including the localization condition. In fact,
    let us first recall that by Proposition~\ref{invsem_ame_nch} the classical definition of invariant measure given by Day can be characterized by domain measurability and the condition
    \begin{equation}
      \mu\left(A \cap s^*s A\right) = \mu\left(A\right)\;,\quad s\in S\,,\; A\subset S\; . \nonumber
    \end{equation}
    Note that, since $A \cap s^*sA = A \cap s^*sS$, we can call this property \textit{localization}, for the measure $\mu$ is concentrated in the domain of the projection $s^*s \in E\left(S\right)$. We now extend this definition to the context of representations.
    \begin{definition}
      Let $\alpha \colon S \to \mathcal I (X)$ be a representation of the inverse semigroup $S$ and let $A \subset X$ be a subset. Then $A$ is 
      \textit{$S$-amenable} when there is a measure $\mu \colon \mathcal{P}(X) \rightarrow \left[0, \infty\right]$ such that:
      \begin{enumerate}
        \item $\mu\left( A \right) = 1$.
        \item $\mu\left( B \right) = \mu\left(\alpha _s(B)\right)$ for all $s \in S$ and $B \subset D_{s^*s}$.
        \item $\mu\left( B \right) = \mu\left(B \cap D_{t^*t}\right)$ for all $t \in S$ and $B \subset X$.
      \end{enumerate}
      We say that $X$ is \textit{$S$-amenable} when the latter holds for $A = X$.
    \end{definition}

    The following Lemma is just a simple observation, but it will be useful for later use.
    \begin{lemma}\label{invsem_lemma_projdec}
      Every countable and inverse semigroup $S$ has a decreasing sequence of projections $\left\{e_n\right\}_{n \in \mathbb{N}}$ that is eventually below every other projection, that is, $e_n \geq e_{n + 1}$ and for every $f \in E(S)$ there is some $n_0 \in \mathbb{N}$ such that $f \geq e_{n_0}$.
    \end{lemma}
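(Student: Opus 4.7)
The plan is to use the countability of $S$ directly, together with the well-known fact that idempotents in an inverse semigroup commute and satisfy $e^2 = e$, to construct the sequence by taking finite products of an enumeration of $E(S)$.

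First, since $S$ is countable, so is $E(S) \subset S$. Enumerate $E(S) = \{f_1, f_2, f_3, \dots\}$ and define
\begin{equation}
  e_n := f_1 f_2 \cdots f_n. \nonumber
\end{equation}
Each $e_n$ belongs to $E(S)$: commutativity of idempotents together with $f_i^2 = f_i$ makes the product idempotent. This is the main and essentially only construction needed.

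Next I would verify the two required properties. For monotonicity, recall that the order on $E(S)$ is $e \leq f \iff ef = e$. Using commutativity and idempotency,
\begin{equation}
  e_{n+1} \, e_n = (f_1 \cdots f_{n+1})(f_1 \cdots f_n) = f_1^2 \cdots f_n^2 \, f_{n+1} = f_1 \cdots f_n f_{n+1} = e_{n+1}, \nonumber
\end{equation}
hence $e_{n+1} \leq e_n$, i.e., $e_n \geq e_{n+1}$. For the eventual domination, given any $f \in E(S)$ there is $n_0 \in \mathbb{N}$ with $f = f_{n_0}$, and then a similar computation gives $f \, e_{n_0} = f_{n_0} f_1 \cdots f_{n_0} = f_1 \cdots f_{n_0-1} f_{n_0}^2 = e_{n_0}$, which is precisely $f \geq e_{n_0}$ in the order on $E(S)$.

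There is no real obstacle here: both statements reduce to the commutativity of idempotents in an inverse semigroup, and the proof is a direct computation based on an enumeration of $E(S)$. The only observation worth flagging is that the result depends essentially on countability, since for a general inverse semigroup one would need a net rather than a sequence; in the countable case the enumeration enables us to hit every $f \in E(S)$ at a finite stage, guaranteeing the eventual inequality $f \geq e_{n_0}$.
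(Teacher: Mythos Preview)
Your proof is correct and takes essentially the same approach as the paper: enumerate $E(S)=\{f_1,f_2,\dots\}$ and set $e_n=f_1\cdots f_n$. The paper's proof is simply terser, omitting the explicit verification of monotonicity and eventual domination that you spell out.
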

    \begin{proof}
      Since $S$ is countable we can enumerate the set of projections $E(S) = \left\{f_1, f_2, \dots\right\}$. The Lemma follows by letting $e_n := f_1 \dots f_n$.
    \end{proof}

    The localization property of the measure can be included in the reasoning leading to Theorem~\ref{invsem_th_ame0}, and this yields the following theorem.
    \begin{theorem}\label{invsem_th_ame01}
      Let $S$ be a countable and discrete inverse semigroup with identity $1 \in S$ and $\alpha \colon S\to \mathcal I (X)$ be a representation of $S$ on $X$. 
      Then the following conditions are equivalent:
      \begin{enumerate}
        \item \label{invsem_th_ame01_mea} $X$ is $S$-amenable.
        \item \label{invsem_th_ame01_par} $D_e$ is not $S$-paradoxical for any $e \in E\left(S\right)$.
        \item \label{invsem_th_ame01_fol} For every $\varepsilon > 0$ and finite $\mathcal{F} \subset S$ there is a finite non-empty $F \subset X$ such that $F \subset D_{s^*s}$ and $\left|s F \setminus F\right| < \varepsilon \left|F\right|$ for all $s \in \mathcal{F}$.
      \end{enumerate}
    \end{theorem}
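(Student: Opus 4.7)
The strategy is to mimic the cyclic proof scheme of Theorem~\ref{invsem_th_ame0}, namely $(1)\Rightarrow(2)$, $(3)\Rightarrow(1)$, $(1)\Rightarrow(3)$, $(2)\Rightarrow(1)$, but systematically incorporating the localization property that distinguishes $S$-amenability from $S$-domain measurability. The two easy directions come first. For $(1)\Rightarrow(2)$: given an $S$-amenable measure $\mu$, the localization condition at $t=e\in E(S)$ yields $\mu(D_e)=\mu(X\cap D_e)=\mu(X)=1$, and any $S$-paradoxical decomposition of $D_e$ would then force the impossibility $2\mu(D_e)\le\mu(D_e)=1$ via the invariance $\mu(\alpha_s C)=\mu(C)$ for $C\subset D_{s^*s}$. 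For $(3)\Rightarrow(1)$: enumerate $S=\{s_1,s_2,\ldots\}$, apply $(3)$ with symmetric finite sets $\mathcal{F}_n=\{s_1,\ldots,s_n,s_1^*,\ldots,s_n^*\}$ and $\varepsilon_n=1/n$ to obtain Følner sets $F_n$, and set $\mu(A):=\lim_{n\to\omega}|A\cap F_n|/|F_n|$ for a free ultrafilter $\omega$. The invariance on $B\subset D_{s^*s}$ is verified exactly as in the proof of Theorem~\ref{invsem_th_ame0}; the new localization condition is automatic because every $e\in E(S)$ satisfies $e=e^*e$, so eventually $e\in\mathcal{F}_n$ and hence $F_n\subset D_{e^*e}=D_e$, giving $\mu(B)=\mu(B\cap D_e)$ in the limit.

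For $(1)\Rightarrow(3)$ the plan is to refine Namioka's trick from the proof of Theorem~\ref{invsem_th_ame0}. Given $\varepsilon>0$ and a finite symmetric $\mathcal{F}\subset S$, set $e:=\prod_{s\in\mathcal{F}}s^*s\in E(S)$. An obvious strengthening of Proposition~\ref{invsem_lemma_mean} produces from $\mu$ a state $m$ on $\ell^\infty(X)$ satisfying both $m(sf)=m(f)$ and $m(P_{D_e}f)=m(f)$. Applying the convexity argument of Lemma~\ref{invsem_lemma_normapp} simultaneously to the finite families $\{s^*h-h\colon s\in\mathcal{F}\}$ and $\{h-P_{D_e}h\}$ yields a positive $h\in\ell^1(X)$ of finite support and norm one making all of these small. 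The truncation $h':=P_{D_e}h/\|P_{D_e}h\|_1$ is then supported on $D_e$, has norm one, and remains approximately invariant. Since $h'$ is supported on $D_e\subset D_{s^*s}$ one has $s^*sh'=h'$, so the Namioka estimate $\|s^*h'-s^*sh'\|_1=\|s^*h'-h'\|_1$ is small; expanding $h'=\sum_i\gamma_iP_{A_i}$ via Lemma~\ref{invsem_lemma_hshape} with nested $A_1\supset\cdots\supset A_N\subset D_e$, and running the averaging of the proof of Theorem~\ref{invsem_th_ame0} (with Lemma~\ref{invsem_lemma_schi} applied to subsets of $D_{s^*s}$), isolates a level $A_{i_0}\subset D_e$ satisfying $|\alpha_sA_{i_0}\setminus A_{i_0}|<\varepsilon|A_{i_0}|$ for every $s\in\mathcal{F}$, which together with $A_{i_0}\subset D_e\subset D_{s^*s}$ verifies $(3)$.

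The main obstacle is $(2)\Rightarrow(1)$. The plan is to build $\mu$ fibrewise over the meet-semilattice $E(S)$. For each $e\in E(S)$, every $(eSe)$-paradoxical decomposition of $D_e$ is in particular $S$-paradoxical, so $(2)$ guarantees that $D_e$ is not $(eSe)$-paradoxical for the restricted representation $\alpha|_{eSe}\colon eSe\to\mathcal{I}(D_e)$. Theorem~\ref{invsem_th_ame0} then supplies an $(eSe)$-domain measure $\mu_e$ on $D_e$ with $\mu_e(D_e)=1$, which I extend to $X$ by $\mu_e(A):=\mu_e(A\cap D_e)$. Taking a weak-$*$ limit of this net along an ultrafilter on $E(S)$ containing every principal order filter $\{f\in E(S): f\le e\}$ produces a candidate measure $\mu$ that automatically satisfies $\mu(X)=1$, $\mu(D_e)=1$ for every $e$, and the full localization property. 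The delicate step, which is where the bulk of the work lies, is recovering the invariance $\mu(\alpha_sB)=\mu(B)$ for $B\subset D_{s^*s}$ in the limit: since $\alpha_s$ need not map $D_e$ into itself, the $(eSe)$-invariance of $\mu_e$ controls only $\mu_e(\alpha_sB\cap D_e)$ and leaves a residual boundary term $\mu_e((B\cap D_e)\cap\{x\colon\alpha_sx\notin D_e\})$. Forcing this residual to vanish in the ultralimit is the main technical hurdle and is where the full strength of $(2)$ (available for \emph{every} $e$, not just $e=1$) must enter, via an absorption argument that exploits $\mu(D_f)=1$ for every $f\in E(S)$ together with a covering of the boundary by domains $D_{e'}$ of finer and finer projections $e'\le e$.
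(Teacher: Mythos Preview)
Your treatment of $(1)\Rightarrow(2)$, $(3)\Rightarrow(1)$ and $(1)\Rightarrow(3)$ matches the paper's proof essentially verbatim; in particular your truncation $h'=P_{D_e}h/\|P_{D_e}h\|_1$ with $e=\prod_{s\in\mathcal F}s^*s$ is exactly the function $g$ the paper introduces.

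The gap is in $(2)\Rightarrow(1)$. By passing to the restricted representation $\alpha|_{eSe}\colon eSe\to\mathcal I(D_e)$ you obtain measures $\mu_e$ that are only $(eSe)$-invariant, and this is what produces the residual boundary term you identify. Your proposed ``absorption argument'' does not close this gap: the relevant boundary is $B\cap D_e\setminus D_{s^*es}$ (here $s^*es\in E(S)$, and $D_{s^*es}=\{x\in D_{s^*s}:\alpha_s(x)\in D_e\}$), and you would need $\mu_e(D_e\setminus D_{s^*es})\to 0$ along the ultrafilter. But localization $\mu(D_f)=1$ is a property of the \emph{limit} $\mu$, not of the individual $\mu_e$, and there is no reason why $\mu_e(D_{s^*es}\cap D_e)$ should be close to $1$; even for $e\le s^*s$ one does not have $D_e\subset D_{s^*es}$ in general, since $\alpha_s$ need not carry $D_e$ into itself. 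The covering idea you sketch does not repair this, because finer $e'\le e$ only shrink $D_{e'}$ and do not force the $\mu_{e'}$-mass onto $D_{s^*es}$.

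The paper avoids this obstacle entirely by a single observation you missed: the proof of $(1)\Leftrightarrow(2)$ in Theorem~\ref{invsem_th_ame0} works verbatim with $X$ replaced by any subset $A\subset X$, in particular $A=D_e$. Hence hypothesis~$(2)$ directly yields, for each $e\in E(S)$, an \emph{$S$}-domain measure $\mu_e$ on $X$ with $\mu_e(D_e)=1$ (not merely an $(eSe)$-domain measure). These $\mu_e$ already satisfy $\mu_e(B)=\mu_e(\alpha_sB)$ for every $s\in S$ and $B\subset D_{s^*s}$, so invariance passes trivially to the limit and there is no boundary term at all. The paper then uses countability of $S$ (Lemma~\ref{invsem_lemma_projdec}) to replace your ultrafilter on $E(S)$ by an ultralimit along a single decreasing sequence $\{e_n\}$ that is eventually below every projection, which makes the localization check immediate.
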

    \begin{proof}
      Observe the equivalence between (\ref{invsem_th_ame01_mea}) and (\ref{invsem_th_ame01_par}) follows from Theorem~\ref{invsem_th_ame0} and Lemma~\ref{invsem_lemma_projdec}. Indeed, one can check that the proof of (\ref{invsem_th_ame0_mea}) $\Leftrightarrow$ (\ref{invsem_th_ame0_par}) in Theorem~\ref{invsem_th_ame0} works for any subset $A \subset X$, in particular if $A = D_e$. Thus, if $D_e$ is not paradoxical for any projection $e$ then there are measures $\mu_e$ on $X$ such that $\mu_e\left(D_e\right) = 1$. Now, by Lemma~\ref{invsem_lemma_projdec}, let $\left\{e_n\right\}_{n \in \mathbb{N}}$ be a decreasing sequence of projections that is eventually below every other projection. A measure in $X$ can be given by:
      \begin{equation}
        \mu\left(B\right) := \lim_{n \rightarrow \omega} \mu_{e_n}\left(B \cap D_{e_n}\right)\;,\quad B\subset X \;,\nonumber
      \end{equation}
      where $\omega$ is a free ultrafilter of $\mathbb{N}$. It is routine to show that $\mu$ is then a probability measure on $X$ satisfying the domain measurability and localization conditions mentioned above, i.e., $\mu\left(A\right) = \mu\left(sA\right)$ when $A \subset D_{s^*s}$ and $\mu\left(B\right) = \mu\left(B \cap D_{s^*s}\right)$ for every $s \in S$, $B \subset X$.

      In order to prove (\ref{invsem_th_ame01_fol}) $\Rightarrow$ (\ref{invsem_th_ame01_mea}) observe that the condition (\ref{invsem_th_ame01_fol}) ensures the existence of a domain F\o lner sequence $\left\{F_n\right\}_{n \in \mathbb{N}}$ such that for every $s \in S$ there is a number $N \in \mathbb{N}$ with 
      $F_n \subset D_{s^*s}$ for all $n \geq N$. Consider then a free ultrafilter $\omega$ on $\mathbb{N}$ and the measure
      \begin{equation}
        \mu\left(B\right) := \lim_{n \rightarrow \omega} \left|B \cap F_n\right|/\left|F_n\right|. \nonumber
      \end{equation}
      It follows from Theorem~\ref{invsem_th_ame0} that $\mu$ is a domain measure, which, in addition, satisfies that
      \begin{equation}
        \mu\left(D_{s^*s}\right) = \lim_{n \rightarrow \omega} \left|D_{s^*s} \cap F_n\right|/\left|F_n\right| = \lim_{n \rightarrow \omega} \left|F_n\right|/\left|F_n\right| = 1, \nonumber
      \end{equation}
      for all $s \in S$, i.e., the measure is localized.

      We will only sketch the proof (\ref{invsem_th_ame01_mea}) $\Rightarrow$ (\ref{invsem_th_ame01_fol}) since it is just a refinement of the same reasoning as in Theorem~\ref{invsem_th_ame0}. Let $\mu$ be an invariant measure on $X$. The corresponding mean $m \colon \ell^{\infty}(X) \rightarrow \mathbb{C}$ (see Proposition~\ref{invsem_lemma_mean}) 
      satisfies $m\left(P_{s^*s}\right) = 1$ for all $s \in S$. Then, any net $h_{\lambda}$ converging to $m$ in norm must also satisfy $\left|\left|h_{\lambda} \left(1 - P_{s^*s}\right)\right|\right|_1 \rightarrow 0$ for all $s \in S$. In particular, this must also be the case for the approximation $h$ appearing in Lemma~\ref{invsem_lemma_normapp}. To get the desired F\o lner set, we have to cut $h$ so that its whole support is within $D_{s^*s}$ for all $s \in \mathcal{F}$. For this, consider $\mathcal{F} = \left\{s_1, \dots, s_k\right\}$ and define the function
      \begin{equation}
        g := \dfrac{h \, P_{s_1^*s_1 \dots s_k^*s_k}}{\left|\left|h \, P_{s_1^*s_1 \dots s_k^*s_k}\right|\right|_1}\;. \nonumber
      \end{equation}
      The function $g$ has norm $1$, is positive and has finite support, which is contained in $D_{s^*s}$ for all $s \in \mathcal{F}$. Furthermore $\left|\left|h - g\right|\right|_1 \leq \varepsilon$. Thus, by substituting $h$ by $g$ in the proof of Theorem~\ref{invsem_th_ame0} and following the same construction, we obtain a F\o lner set $F$ within the support of $g$, that is, a F\o lner set within the requirements of the theorem.
    \end{proof}
    
    \begin{remark}
     Following results in \cite[Theorem~3.1]{GK17} one has that for inverse semigroups amenability is equivalent to the F\o lner condition and a local injectivity condition on the F\o lner sets. The preceding theorem is an improvement of Gray and Kambites' result applied to inverse semigroups. Amenability gives, in fact, that  
     F\o lner sets can be taken within the corresponding domains and, thus, the local injectivity condition is guaranteed by the localization property of the measures.
    \end{remark}

    Note that Theorem~\ref{invsem_th_ame01} is not constructive in the sense that if $X$ is not amenable, then one knows some $D_{e_0}$ is paradoxical, but Theorem~\ref{invsem_th_ame01} does not tell which element $e_0$ satisfies this condition. In the case $S$ has a minimal projection $e_0$ then one can improve the preceding Theorem. To do this we first remark the following simple and useful lemma.
    \begin{lemma}\label{invsem_lemma_mincomm}
      Let $S$ be a discrete and countable inverse semigroup with a minimal projection $e_0 \in E\left(S\right)$. Then $e_0$ commutes with every $s \in S$.
    \end{lemma}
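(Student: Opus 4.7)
My plan is to read the hypothesis that $e_0$ is a minimal projection as meaning that $e_0$ is the minimum element of the meet-semilattice $E(S)$, so that $e_0 f = f e_0 = e_0$ for every $f \in E(S)$ (this is automatic since idempotents in an inverse semigroup commute). Under this reading, I will prove the two ``sandwich'' identities $s e_0 = e_0 s e_0$ and $e_0 s = e_0 s e_0$, and combine them.

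The key computation is short. Fix $s \in S$. A standard calculation shows that $s^* e_0 s$ is an idempotent, using that $e_0$ commutes with $s^* s$. Since $e_0$ is the minimum of $E(S)$, we have $(s^* e_0 s) e_0 = e_0$, i.e. $s^* e_0 s e_0 = e_0$. Multiplying on the left by $s$ yields $s s^* e_0 s e_0 = s e_0$; minimality applied to the projection $s s^*$ gives $s s^* e_0 = e_0$, so the left-hand side collapses to $e_0 s e_0$. Hence $e_0 s e_0 = s e_0$.

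For the second identity I would apply the same argument to $s^*$ in place of $s$ (using $s^{**} = s$), obtaining $e_0 s^* e_0 = s^* e_0$, and then take the involution of both sides, recalling $e_0^* = e_0$, to get $e_0 s e_0 = e_0 s$. Chaining the two identities, $s e_0 = e_0 s e_0 = e_0 s$, completes the proof.

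There is essentially no serious obstacle: everything reduces to two uses of the absorbing property $e_0 f = e_0$ plus the involutive symmetry trick. The only delicate point is the interpretation of ``minimal''; if one instead took $e_0$ to be merely minimal (not an absolute minimum), then the products $e_0 s^* s$ and $e_0 s s^*$ could a priori be $0$ in a way that forces $e_0 s = 0$ but leaves $s e_0 \neq 0$, so the lemma would need an additional argument (or an extra hypothesis, such as the presence of a zero element together with an absorbing property that rules out this asymmetry).
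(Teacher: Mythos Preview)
Your proof is correct and follows essentially the same route as the paper's. Both arguments hinge on the fact that $s^* e_0 s$ (and $s e_0 s^*$) is an idempotent, so minimality of $e_0$ gives $s^* e_0 s\, e_0 = e_0$ and $e_0\, s e_0 s^* = e_0$; multiplying by $s$ and using $ss^* e_0 = e_0 = e_0 s^*s$ collapses both sides to $e_0 s e_0$. The only cosmetic difference is that the paper obtains $e_0 s = e_0 s e_0$ and $s e_0 = e_0 s e_0$ directly in a single chain, whereas you derive one of them and recover the other via the involution trick $s\mapsto s^*$; this is a matter of presentation, not substance. Your remark on ``minimal'' versus ``minimum'' is apt: the paper indeed uses $e_0$ as the least element of $E(S)$ (cf.\ the sequence in Lemma~\ref{invsem_lemma_projdec}), so your reading matches the intended hypothesis.
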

    \begin{proof}
      Note that $e_0 = e_0 \, se_0s^* = s^*e_0s \, e_0$ by the minimality of $e_0$. Thus, we obtain
      \begin{equation}
        e_0 \, s = e_0 \, s e_0 s^* \, s = e_0 \, s \, e_0 = s \, s^*e_0s \, e_0 = s \, e_0, \nonumber
      \end{equation}
      where, for the first equality we have multiplied the identity $e_0=e_0se_0s^*$ from the right by $s$, and for the last equality we have multiplied the identity
      $e_0= s^*e_0se_0$ from the left by $s$. 
This proves the claim.
    \end{proof}
    \begin{proposition}\label{pro:invsem_cor_min}
      Let $S$ be a discrete, countable inverse semigroup with a minimal projection $e_0 \in E\left(S\right)$ and let $\alpha \colon S \rightarrow \mathcal{I}(X)$ be a representation. Then the following conditions are equivalent:
      \begin{enumerate}
        \item \label{pro:invsem_cor_min_ame} $X$ is $S$-amenable.
        \item \label{pro:invsem_cor_min_par} $D_{e_0}$ is not $S$-paradoxical.
        \item \label{pro:invsem_cor_min_fol} $D_{e_0}$ is $S$-domain F\o lner.
      \end{enumerate}
    \end{proposition}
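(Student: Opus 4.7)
The plan is to reduce all three conditions to Theorem~\ref{invsem_th_ame01}, using the rigidity afforded by a minimum projection. The key extra ingredient I would extract from the hypothesis is the inclusion $D_{e_0}\subseteq D_{s^*s}$ for every $s\in S$: indeed, Lemma~\ref{invsem_lemma_mincomm} tells us that $e_0$ commutes with every $s$, so $e_0 s^*s=s^*s\,e_0$ is a projection bounded above by $e_0$; by minimality it must equal $e_0$, giving $e_0\le s^*s$ in $E(S)$.

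For the implication (\ref{pro:invsem_cor_min_ame}) $\Rightarrow$ (\ref{pro:invsem_cor_min_par}) I would simply specialize Theorem~\ref{invsem_th_ame01}(\ref{invsem_th_ame01_par}) to $e=e_0$. For (\ref{pro:invsem_cor_min_par}) $\Rightarrow$ (\ref{pro:invsem_cor_min_fol}), I would invoke the fact, already noted inside the proof of Theorem~\ref{invsem_th_ame01}, that the equivalences in Theorem~\ref{invsem_th_ame0} are \emph{local}: they hold for any subset $A\subseteq X$ in place of $X$ itself. Applying this to $A=D_{e_0}$, the failure of $S$-paradoxicality of $D_{e_0}$ yields an $S$-domain F\o lner sequence sitting inside $D_{e_0}$.

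The interesting implication is (\ref{pro:invsem_cor_min_fol}) $\Rightarrow$ (\ref{pro:invsem_cor_min_ame}), and here the key inclusion does the work. Given a F\o lner sequence $\{F_n\}\subseteq D_{e_0}$ witnessing (\ref{pro:invsem_cor_min_fol}), each $F_n$ already lies inside $D_{s^*s}$ for every $s$, so $F_n\cap D_{s^*s}=F_n$ and the domain F\o lner condition reads $|sF_n\setminus F_n|/|F_n|\to 0$. Consequently, for any $\varepsilon>0$ and finite $\mathcal{F}\subseteq S$, taking $F=F_n$ for $n$ large enough furnishes a finite set contained in $D_{s^*s}$ for every $s\in\mathcal{F}$ with $|sF\setminus F|<\varepsilon|F|$; that is exactly condition~(\ref{invsem_th_ame01_fol}) of Theorem~\ref{invsem_th_ame01}, and that theorem then delivers the $S$-amenability of $X$.

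The main obstacle I would anticipate is making the inclusion $D_{e_0}\subseteq D_{s^*s}$ airtight from whichever precise form of ``minimal projection'' is being used in the paper, together with confirming that the proof of Theorem~\ref{invsem_th_ame0} genuinely localizes to an arbitrary subset $A\subseteq X$; once those two points are in hand, no serious calculation is required and the statement becomes a bookkeeping exercise within the framework already developed.
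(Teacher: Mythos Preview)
Your proposal is correct and follows essentially the same route as the paper. For (\ref{pro:invsem_cor_min_par}) $\Rightarrow$ (\ref{pro:invsem_cor_min_fol}) the paper phrases things slightly differently: rather than invoking a localized version of Theorem~\ref{invsem_th_ame0}, it observes (via Lemma~\ref{invsem_lemma_mincomm}) that $D_{e_0}$ is invariant under the action and then applies Theorem~\ref{invsem_th_ame0} to the induced representation on $D_{e_0}$; this sidesteps having to verify that the full proof of Theorem~\ref{invsem_th_ame0} localizes to arbitrary subsets. For (\ref{pro:invsem_cor_min_fol}) $\Rightarrow$ (\ref{pro:invsem_cor_min_ame}) the paper merely says the argument is similar to Theorem~\ref{invsem_th_ame01}, whereas your explicit observation that $e_0\le s^*s$ forces $F_n\subseteq D_{e_0}\subseteq D_{s^*s}$, so that condition~(\ref{invsem_th_ame01_fol}) of Theorem~\ref{invsem_th_ame01} is met outright, is exactly the content needed and makes the step transparent.
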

    \begin{proof}
      The implication (\ref{pro:invsem_cor_min_ame}) $\Rightarrow$ (\ref{pro:invsem_cor_min_par}) is a particular case of Theorem~\ref{invsem_th_ame01}. For (\ref{pro:invsem_cor_min_par}) $\Rightarrow$ (\ref{pro:invsem_cor_min_fol}) note that it follows from Lemma~\ref{invsem_lemma_mincomm} that $D_{e_0} \subset X$ is an invariant subset for the action. Indeed, for any $s \in S$
      \begin{equation}
        s \left(D_{e_0} \cap D_{s^*s}\right) = s D_{e_0} \subset D_{e_0}. \nonumber
      \end{equation}
      The implication (\ref{pro:invsem_cor_min_par}) $\Rightarrow$ (\ref{pro:invsem_cor_min_fol}) therefore follows from Theorem~\ref{invsem_th_ame0} by considering, if necessary, the induced action of $S$ on $D_{e_0}$. Finally, (\ref{pro:invsem_cor_min_fol}) $\Rightarrow$ (\ref{pro:invsem_cor_min_ame}) can be proven in a similar fashion to that of (\ref{invsem_th_ame01_fol}) $\Rightarrow$ (\ref{invsem_th_ame01_mea}) in Theorem~\ref{invsem_th_ame01}. 
    \end{proof}

    As an example of an inverse semigroup $S$ with a minimal projection we consider the case where $S$ satisfies the F\o lner condition but {\em not} the {\em proper} F\o lner condition (see Definition~\ref{def:sem}).
    \begin{proposition}\label{invsem_fol_notpropfl}
      Let $S$ be a countable and discrete inverse semigroup. Suppose $S$ satisfies the F\o lner condition but not the proper F\o lner condition. Then $S$ has a minimal projection.
    \end{proposition}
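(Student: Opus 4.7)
The plan is to invoke Theorem~\ref{sem_fvspf}, which furnishes an element $a \in S$ with $|Sa| < \infty$, and then extract the desired minimal projection from the set of projections dominated by $aa^*$. Since that set turns out to be finite, and since $E(S)$ is a commutative meet-semilattice, its product is a natural candidate for the minimal projection.

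More precisely, I would set
\[
  P \;:=\; \{e \in E(S) \mid e \leq aa^*\}
\]
and consider the map $\phi \colon P \to Sa$, $\phi(e) := ea$. The key observation is that $\phi$ is injective: if $ea = fa$ with $e, f \in P$, then multiplying on the right by $a^*$ gives $eaa^* = faa^*$, and since $e \leq aa^*$ means $e \cdot aa^* = e$ (similarly for $f$), we conclude $e = f$. Hence $|P| \leq |Sa| < \infty$.

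Since $P$ is finite and the elements of $E(S)$ commute, the product $e_0 := \prod_{e \in P} e$ is a well-defined projection. It satisfies $e_0 \leq aa^*$ (so $e_0 \in P$) and $e_0 \leq e$ for every $e \in P$, so $e_0$ is the minimum of $P$.

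Finally, I would argue that $e_0$ is actually a minimum of the full semilattice $E(S)$, not merely of $P$. Given any $f \in E(S)$, the product $e_0 f$ is a projection with $e_0 f \leq e_0 \leq aa^*$, hence $e_0 f \in P$. By minimality of $e_0$ within $P$, we get $e_0 \leq e_0 f$; combined with the trivial inequality $e_0 f \leq e_0$, this forces $e_0 f = e_0$, i.e., $e_0 \leq f$. Thus $e_0$ is a minimal projection of $S$. The only delicate point in the argument is the injectivity of $\phi$, which crucially uses that all elements of $P$ are absorbed by $aa^*$ from the right; once this is in hand, the rest is a routine manipulation in the meet-semilattice $E(S)$.
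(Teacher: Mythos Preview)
Your proof is correct. Both your argument and the paper's start from Theorem~\ref{sem_fvspf} to obtain $a \in S$ with $|Sa| < \infty$, and both extract the minimal projection as a finite product of idempotents below $aa^*$. The difference is in the packaging. The paper enumerates $Sa = \{s_1a, \dots, s_ka\}$, sets $e := s_1^*s_1 \cdots s_k^*s_k \, aa^*$, and for an arbitrary $f \in E(S)$ writes $fa = s_ia$ for some $i$, then computes directly that $f \ge faa^* = s_i^*s_i \, aa^* \ge e$. You instead isolate the principal down-set $P = \{e \in E(S) : e \le aa^*\}$, prove it is finite via the injection $e \mapsto ea$ into $Sa$, take $e_0$ to be its meet, and then handle an arbitrary $f$ by observing that $e_0 f \in P$ forces $e_0 \le e_0 f \le e_0$. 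Your route is slightly more lattice-theoretic and avoids the auxiliary computation with $a^*fa$; the paper's route is more explicit about which element witnesses minimality. The two constructions yield the same projection, since the paper's $e$ is visibly the meet of your set $P$ once one notes (as the paper implicitly does) that every element of $P$ has the form $s_i^*s_i \, aa^*$ for some $i$.
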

    \begin{proof}
      Following Theorem~\ref{sem_fvspf} there is an element $a \in S$ such that $\left|S a\right| < \infty$. Suppose $Sa = \left\{s_1a, \dots, s_ka\right\}$. Then we claim $e := s_1^*s_1 \dots s_k^*s_k aa^* \in S$ is a minimal projection. Indeed, for any other projection $f \in E\left(S\right)$ there is an $i$ such that $s_ia = fa$. In this case we have
      \begin{equation}
        a^*fa = \left(fa\right)^* fa = \left(s_ia\right)^*s_ia = a^*s_i^*s_ia. \nonumber
      \end{equation}
      Thus, multiplying by $a$ from the left, $aa^*fa = fa = aa^*s_i^*s_ia = s_i^*s_ia$. Therefore 
      \[
        f \geq faa^* = s_i^*s_iaa^* \geq e\;,
      \]
      proving that $e$ is indeed minimal.
    \end{proof}

    Note that, in order to produce an example of an inverse semigroup $S$ that is F\o lner but not proper F\o lner, as in the hypothesis of Proposition~\ref{invsem_fol_notpropfl}, $S$ must have a minimal projection $e_0 \in E(S)$. Moreover, by Proposition~\ref{pro:invsem_cor_min}, the domain $D_{e_0}$ must be domain-F\o lner.
    It can thus be shown that $S := \mathbb{F}_2 \sqcup \left\{0\right\}$ satisfies the F\o lner condition but not the proper one.

  \section{Inverse semigroups, C$^*$-algebras and traces}\label{sec_invsem_ctr}
  In this final part of the article we connect the analysis of the previous section with properties of a C*-algebra $\mathcal{R}_X$ generalizing the uniform Roe algebra $\mathcal{R}_G$ of a group $G$ presented in Section~\ref{sec_pre}. In particular, we will show that domain measurability completely characterizes the existence of traces in $\mathcal{R}_X$.

  Let $S$ be a discrete inverse semigroup with identity and consider a representation $\alpha \colon S\to\mathcal{I}(X)$ on a set $X$. As before, we will denote $\alpha_s(x)$ simply by $sx$ for any $s\in S$, $x\in D_{s^*s}$. To construct the C$^*$-algebra $\mathcal{R}_X$ consider first the representation $V \colon S \rightarrow \mathcal{B}\left(\ell^2 (X)\right)$ given by
  \begin{equation}
    V_s \delta_x := \left\{ \begin{array}{lcc}
                      \delta_{sx} & \text{if} & x\in D_{s^*s} \\
                      0 & \text{if} & x\notin D_{s^*s}\;,
                    \end{array} \right. \nonumber
  \end{equation}
  where $\left\{\delta_x\right\}_{x \in X} \subset \ell^2(X)$ is the canonical orthonormal basis. $V$ is a $*$-representation of $S$ in terms of partial isometries of $\ell^2 (X)$. Define the unital *-subalgebra $\mathcal{R}_{X, alg}$ in $\mathcal{B}(\ell^2(X))$
  generated by $\left\{V_s \mid s \in S\right\}$ and $\ell^{\infty}(X)$. The C*-algebra $\mathcal{R}_X$ is defined as the norm closure of $\mathcal{R}_{X, alg}$, i.e., 
  \begin{equation}
    \mathcal{R}_X := \overline{\mathcal{R}_{X, alg}} = \text{C}^*\Big(\left\{V_s\mid s\in S\right\}\cup \ell^\infty(X)\Big)\subset\mathcal{B}\left(\ell_2(X)\right)\;. \nonumber
  \end{equation}

  Note that conjugation by $V_s$ implements the action of $s \in S$ on subsets $A \subset X$. It is straightforward to show the following intertwining equation for the generators of $\mathcal R_X$: 
  \[
    P_A V_s = V_s P_{s^*(A\cap D_{ss^*})}\;,\quad s\in S\;,\; A\subset X\;,
  \]
  where, as before, $P_A \in \mathcal{B}\left(\ell^2(X)\right)$ is the orthogonal projection onto the closure of $\text{span}\left\{\delta_a \mid a \in A\right\}$. Note also that for any $s \in S$, $f\in\ell^\infty(X)$ (which we interpret as multiplication operators on $\ell^2(X)$) we have the following commutation relations between the generators of $\mathcal{R}_X$
  \begin{equation}
    V_s \, f = \big(s f\big)\, V_s \quad\mathrm{or, equivalently,} \quad f\, V_s  = V_s\, \big(s^{*} f\big)\;. \nonumber
  \end{equation}
  From this commutation relations, it follows that the algebra $\mathcal{R}_X$ is actually the closure in operator norm of the linear span of operators of the form $V_s P_A$, where $s \in S$, $A \subset D_{s^*s}$. This fact will be used throughout the section.

    \subsection{Domain measures as amenable traces}
    Before proving the next theorem we first need to introduce some notation and some preparing lemmas. The first result defines a canonical conditional expectation from $\mathcal{B}\left(\ell^2(X)\right)$ onto $\ell^{\infty}(X)$. The proof is virtually the same as in the case of groups (see, e.g., \cite{BO08}).
    \begin{lemma}\label{invsem_lemma_exp}
      The linear map $\mathcal{E} \colon \mathcal{B}\left(\ell^2(X)\right) \rightarrow \ell^{\infty}(X)$ given by
      \begin{equation}
        \mathcal{E}\left(T\right) = \sum_{x \in X} P_{\left\{x\right\}} T P_{\left\{x\right\}}, \nonumber 
      \end{equation}
      is a conditional expectation, where the sum is taken in the strong operator topology.
    \end{lemma}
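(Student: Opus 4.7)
The plan is to identify $\mathcal{E}(T)$ explicitly as the multiplication operator corresponding to the diagonal function $f_T(x) := \langle T\delta_x, \delta_x\rangle$, and then verify the defining properties of a conditional expectation directly from this formula.

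First I would observe that $P_{\{x\}}$ is the rank-one projection onto $\mathbb{C}\delta_x$, so $P_{\{x\}} T P_{\{x\}} = \langle T\delta_x, \delta_x\rangle\, P_{\{x\}}$. Consequently, for any finite $F \subset X$ and any $\xi = \sum_{x \in X} \xi(x)\delta_x \in \ell^2(X)$,
\[
  \Big(\sum_{x \in F} P_{\{x\}} T P_{\{x\}}\Big)\xi \;=\; \sum_{x \in F} \langle T\delta_x,\delta_x\rangle\,\xi(x)\,\delta_x.
\]
Because $|f_T(x)| \leq \|T\|$ uniformly in $x$, the function $f_T$ belongs to $\ell^\infty(X)$, and the sum $\sum_{x \in X} f_T(x)\xi(x)\delta_x$ converges in $\ell^2(X)$ (it is the image of $\xi$ under the multiplication operator $M_{f_T}$, which has norm at most $\|T\|$). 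This yields strong operator convergence of the net of partial sums to $M_{f_T}$, so $\mathcal{E}(T) = f_T \in \ell^\infty(X) \subset \mathcal{B}(\ell^2(X))$ is well-defined, linear, positive, and contractive.

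Next I would check the conditional expectation axioms. If $g \in \ell^\infty(X)$, acting by multiplication, then $g\delta_x = g(x)\delta_x$, so $\langle g\delta_x, \delta_x\rangle = g(x)$ and $\mathcal{E}(g) = g$. For the bimodule property, let $a, b \in \ell^\infty(X)$ and $T \in \mathcal{B}(\ell^2(X))$; using that $a$ and $b$ are diagonal,
\[
  \langle aTb\,\delta_x, \delta_x\rangle \;=\; b(x)\,\langle T\delta_x, \overline{a}\cdot\delta_x\rangle \;=\; a(x)b(x)\,\langle T\delta_x,\delta_x\rangle,
\]
so $\mathcal{E}(aTb) = a\,\mathcal{E}(T)\,b$. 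In particular $\mathcal{E}^2 = \mathcal{E}$, confirming that $\mathcal{E}$ is a conditional expectation onto $\ell^\infty(X)$.

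The only genuinely non-formal point is the strong operator convergence of the defining sum when $X$ is infinite, and this is handled by the uniform bound $|f_T(x)| \leq \|T\|$ together with the fact that $\sum_{x \in X} |\xi(x)|^2 < \infty$ for $\xi \in \ell^2(X)$; everything else reduces to the standard computation for diagonal operators.
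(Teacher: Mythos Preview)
Your proof is correct and is precisely the standard argument the paper has in mind; the paper itself does not spell out a proof but simply remarks that ``the proof is virtually the same as in the case of groups'' and refers to \cite{BO08}. Your explicit identification of $\mathcal{E}(T)$ with the diagonal multiplication operator by $f_T(x)=\langle T\delta_x,\delta_x\rangle$, together with the direct verification of the bimodule property and strong convergence, is exactly that standard argument.
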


    To analyze the dynamics on F\o lner sets of inverse semigroups it is convenient to introduce the following
    equivalence relation in $X$. Let $\alpha \colon S\to \mathcal I (X)$ be a representation of $S$ on $X$. Given a pair $u, v \in X$, we write $u \approx v$ if  there is some $s \in S$ such that $u \in D_{s^*s}$ and $su = v$. The relation $\approx$ is an equivalence relation: in fact, since $S$ is unital, $u \approx u$, and if $u \approx v$ then $v \approx u$ by considering $v = su \in D_{ss^*}$. For transitivity, if $u \approx v \approx w$ then $u \in D_{s^*t^*ts}$, where $s, t$ witness $u \approx v$ and $v \approx w$ respectively. We will see in Lemma~\ref{invsem_lemma_dimw} that if a set $F \subset X$ has only one $\approx$-class, then the corner $P_F \mathcal{R}_X P_F$ has dimension $\left|F\right|^2$ as a vector space.

    The next lemma guarantees the existence of transitive domain F\o lner sets, i.e., of domain F\o lner sets $F$ such that $F/\approx$ is a singleton.
    \begin{lemma}\label{invsem_lemma_fequiv}
      Let $\alpha \colon S\to \mathcal I (X)$ be a representation of $S$ on $X$. If $A \subset X$ is domain F\o lner then for every $\varepsilon > 0$ and finite $\mathcal{F} \subset S$, there is a 
      $(\varepsilon , \mathcal F)$-domain F\o lner $F_0 \subset A$ with exactly one $\approx$-equivalence class
    \end{lemma}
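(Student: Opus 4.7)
The strategy is to start with a domain F\o lner sequence $\{F_n\}_{n\in\mathbb{N}}$ inside $A$ (which exists by hypothesis), decompose each $F_n$ into its $\approx$-equivalence classes, and use an averaging argument to locate a single class on which the domain F\o lner property is preserved. Concretely, let $\{C_\beta\}_{\beta\in I}$ denote the partition of $X$ into $\approx$-classes, and write $F_n^{\beta} := F_n\cap C_\beta$, so that $F_n = \bigsqcup_\beta F_n^{\beta}$.

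The key observation is that the action of any $s\in S$ respects this partition: if $u \in D_{s^*s}$, then $u \approx \alpha_s(u)$ by the very definition of $\approx$, so $\alpha_s(F_n^\beta \cap D_{s^*s})\subseteq C_\beta$. Therefore the images $\alpha_s(F_n^\beta\cap D_{s^*s})$, $\beta\in I$, are pairwise disjoint, and moreover
\[
 \alpha_s(F_n^\beta\cap D_{s^*s}) \setminus F_n \;=\; \alpha_s(F_n^\beta\cap D_{s^*s}) \setminus F_n^\beta,
\]
since the left-hand side lies entirely in $C_\beta$, where $F_n$ and $F_n^\beta$ agree. Consequently,
\[
 \bigl| \alpha_s(F_n \cap D_{s^*s}) \setminus F_n \bigr| \;=\; \sum_{\beta\in I} \bigl| \alpha_s(F_n^\beta \cap D_{s^*s}) \setminus F_n^\beta \bigr|,
\]
while $|F_n| = \sum_\beta |F_n^\beta|$.

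Given $\varepsilon>0$ and a finite $\mathcal{F}\subset S$, by the domain F\o lner property we can choose $n$ so large that $\sum_{s\in\mathcal{F}} |\alpha_s(F_n\cap D_{s^*s})\setminus F_n| < \varepsilon\, |F_n|$. I claim there must then exist some $\beta\in I$ with $F_n^\beta\neq\emptyset$ such that
\[
 \bigl| \alpha_s(F_n^\beta\cap D_{s^*s})\setminus F_n^\beta\bigr| \;\le\; \varepsilon\, |F_n^\beta| \qquad \text{for every } s\in\mathcal{F}.
\]
Indeed, if this were not the case, one could pick for each non-empty $\beta$ an element $s(\beta)\in \mathcal{F}$ violating the estimate, and then
\[
 \varepsilon\,|F_n| \;=\; \varepsilon\sum_\beta |F_n^\beta| \;<\; \sum_\beta \bigl|\alpha_{s(\beta)}(F_n^\beta\cap D_{s(\beta)^*s(\beta)}) \setminus F_n^\beta\bigr| \;\le\; \sum_{s\in\mathcal{F}} \bigl|\alpha_s(F_n\cap D_{s^*s}) \setminus F_n\bigr|,
\]
contradicting the choice of $n$. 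Setting $F_0 := F_n^\beta$ yields the required set: it is non-empty, contained in $A$, consists of a single $\approx$-class by construction, and is $(\varepsilon,\mathcal{F})$-domain F\o lner.

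The main point to be careful with is the disjointness-plus-invariance observation that allows the global quantities $|\alpha_s(F_n\cap D_{s^*s})\setminus F_n|$ and $|F_n|$ to split exactly across the $\approx$-decomposition. Once this is in hand, the pigeonhole/averaging step is routine. No other obstacle is expected.
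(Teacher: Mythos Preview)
Your argument is correct and follows essentially the same approach as the paper's proof: decompose a sufficiently good F\o lner set into its $\approx$-classes, use that the action of each $s$ preserves the classes so the boundary $\alpha_s(F\cap D_{s^*s})\setminus F$ splits exactly across the decomposition, and then argue by contradiction that some class must itself be $(\varepsilon,\mathcal F)$-F\o lner. The only cosmetic difference is that the paper chooses $F$ with $|s(F\cap D_{s^*s})\setminus F|<\frac{\varepsilon}{|\mathcal F|}|F|$ for each $s$ separately and then groups the classes according to which $s$ witnesses failure, whereas you work directly with the sum over $s\in\mathcal F$; the underlying averaging is identical.
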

    \begin{proof}
      Since $A$ is domain F\o lner, for any $\varepsilon > 0$ and finite $\mathcal{F} \subset S$ there is a finite $F \subset A$ such that
      \begin{equation}
        \left|s\left(F \cap D_{s^*s}\right) \setminus F\right| < \frac{\varepsilon}{\left|\mathcal{F}\right|}\, \left|F\right| \, , \quad \text{for all} \; s \in \mathcal{F}. \nonumber
      \end{equation}
      Decomposing $F$ into its $\approx$-classes we get $F = F_1 \sqcup \dots \sqcup F_L$, where $u \approx v$ if and only if $u, v \in F_i$ for some $i$. To prove the claim it is enough to prove that some $F_j$ must be $\left(\varepsilon, \mathcal{F}\right)$-domain F\o lner. Indeed, suppose for all $j = 1, \dots, L$ there is an $s_j \in \mathcal{F}$ such that
      \begin{equation}
        \left|s_j\left(F_j \cap D_{s_j^*s_j}\right) \setminus F_j\right| \geq \varepsilon \left|F_j\right|. \nonumber
      \end{equation}
      Observe that the choice of $s_j$ is not unique, but we can consider a particular fixed choice. Arrange then the indices according to the following: 
      for $s \in \mathcal{F}$, consider $\Lambda_s := \left\{j \in \{ 1, \dots, L \} \mid s_j = s\right\}$. Note that some $\Lambda_s$ might be empty. Define $F_s := \sqcup_{i \in \Lambda_s} F_i$ and observe
      \begin{equation}
        \left|s \left(F_s \cap D_{s^*s}\right) \setminus F_s\right| = \sum_{j \in \Lambda_s} \left|s_j \left(F_j \cap D_{s_j^*s_j}\right) \setminus F_j\right| \geq \varepsilon \sum_{j \in \Lambda_s} \left|F_j\right| = \varepsilon \left|F_s\right|. \nonumber
      \end{equation}
      Taking the sum over all $s \in \mathcal{F}$  we get
      \begin{align}
        \varepsilon \left|F\right| > \sum_{s \in \mathcal{F}} \left|s\left(F \cap D_{s^*s}\right) \setminus F\right| & = \sum_{s, t \in \mathcal{F}} \left|s\left(F_t \cap D_{s^*s}\right) \setminus F_t\right| \nonumber \\
        & \geq \sum_{s \in \mathcal{F}} \left|s \left(F_s \cap D_{s^*s}\right) \setminus F_s\right| \geq \varepsilon \sum_{s \in \mathcal{F}} \left|F_s\right| = \varepsilon \left|F\right|. \nonumber
      \end{align}
      This is a contradiction and, thus, some $F_{j_0}$ must witness the domain F\o lner condition.
    \end{proof}

    The next lemma computes the dimension of a certain corner of the algebra $\mathcal{R}_X$.
    \begin{lemma}\label{invsem_lemma_dimw}
      Let $\alpha \colon S\to \mathcal I (X)$ be a representation of $S$ on $X$. Let $F_1, F_2 \subset X$ be finite sets such that $F_1 \cup F_2$ has only one $\approx$-class. Then $W := P_{F_2} \mathcal{R}_X P_{F_1}$ has linear dimension $\left|F_1\right| \left|F_2\right|$.
    \end{lemma}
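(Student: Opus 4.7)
The plan is to exhibit an explicit basis of $W$ consisting of the matrix units $E_{v,u}:=|\delta_v\rangle\langle \delta_u|$ with $v\in F_2$, $u\in F_1$, of which there are exactly $|F_1||F_2|$ and which are plainly linearly independent in $\mathcal B(\ell^2(X))$.

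First I would show the reverse inclusion $W\subseteq \mathrm{span}\{E_{v,u}\mid v\in F_2,\,u\in F_1\}$. Since $\mathcal R_{X,alg}$ is the linear span of operators of the form $V_s P_A$ with $s\in S$ and $A\subseteq D_{s^*s}$, and since $P_{F_2}V_sP_AP_{F_1}=P_{F_2}V_sP_{A\cap F_1}$, it suffices to analyze the action of such an operator on the basis: for $x\in X$,
\begin{equation}
P_{F_2}V_sP_{A\cap F_1}\delta_x \;=\; \begin{cases}\delta_{sx} & \text{if } x\in A\cap F_1 \text{ and } sx\in F_2,\\ 0 & \text{otherwise,}\end{cases}\nonumber
\end{equation}
so each such operator equals $\sum_{u\in A\cap F_1,\,su\in F_2} E_{su,u}$, which lies in the finite-dimensional space $M:=\mathrm{span}\{E_{v,u}\mid v\in F_2,\,u\in F_1\}$. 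Hence $P_{F_2}\mathcal R_{X,alg}P_{F_1}\subseteq M$. Because $M$ is finite-dimensional it is norm-closed, so passing to the norm closure gives $W=P_{F_2}\mathcal R_X P_{F_1}\subseteq M$, and consequently $\dim W\le |F_1||F_2|$.

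For the other direction I would use the single-$\approx$-class hypothesis to realise each matrix unit $E_{v,u}$ inside $W$. Fix $u\in F_1$ and $v\in F_2$. Since $F_1\cup F_2$ has only one $\approx$-equivalence class, there exists $s\in S$ with $u\in D_{s^*s}$ and $su=v$. The operator $V_s P_{\{u\}}$ belongs to $\mathcal R_{X,alg}$, and a direct computation gives $V_sP_{\{u\}}\delta_x=\delta_v$ if $x=u$ and $0$ otherwise; that is, $V_sP_{\{u\}}=E_{v,u}$. Multiplying on the left by $P_{F_2}$ and on the right by $P_{F_1}$ leaves $E_{v,u}$ unchanged because $v\in F_2$ and $u\in F_1$, so $E_{v,u}=P_{F_2}V_sP_{\{u\}}P_{F_1}\in W$. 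Thus $M\subseteq W$, and combining with the previous paragraph yields $\dim W=|F_1||F_2|$.

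No serious obstacle is expected; the only point that needs a moment's care is the closure step in the first inclusion, which is immediate from the finite-dimensionality (hence automatic norm-closedness) of $M$. The single-$\approx$-class hypothesis is used in precisely one place, to produce the semigroup element $s$ carrying $u$ to $v$ and thereby to realise the matrix unit $E_{v,u}$ as a concrete element of the generating set of $\mathcal R_{X,alg}$.
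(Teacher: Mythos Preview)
Your proof is correct and follows essentially the same approach as the paper: both show that each matrix unit $E_{v,u}$ lies in $W$ by choosing $s\in S$ with $su=v$ (from the single-$\approx$-class hypothesis) and writing the matrix unit as $P_{F_2}V_sP_{\{u\}}P_{F_1}$. Your argument is in fact more complete, since you spell out the upper bound $\dim W\le |F_1||F_2|$ via the closure step, which the paper leaves implicit.
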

    \begin{proof}
      To prove the claim it suffices to show that for every $u_i \in F_i, i = 1, 2$ the matrix unit
      \begin{equation}
        M_{u_2, u_1} \delta_x = \left\{
                                        \begin{array}{lcc}
                                          \delta_{u_2} & \text{if} & x = u_1 \nonumber \\
                                          0 &  & \text{otherwise,}
                                        \end{array}
                                      \right. \nonumber
      \end{equation}
      is contained in $W$. Since $u_1 \approx u_2$ there must be an element $s \in S$ such that $u_1 \in D_{s^*s}$ and $su_1 = u_2$. It is straightforward to prove that in this case
      \begin{equation}
        M_{u_2, u_1} = P_{F_2} P_{u_2} V_s P_{u_1} P_{F_1} \in P_{F_2} \mathcal{R}_X P_{F_1} = W\;, \nonumber
      \end{equation}
      hence dim$W=\left|F_1\right| \left|F_2\right|$.
    \end{proof}

    We are now in a position to show the main theorem of the section, which characterizes the domain measurability of the action in terms of amenable 
    traces of the algebra $\mathcal{R}_X$.
    \begin{theorem}\label{invsem_th_ame1}
      Let $S$ be a countable and discrete inverse semigroup with identity $1 \in S$, and let $\alpha \colon S\to \mathcal I (X)$ be a representation of $S$ on $X$. Then the following are equivalent:
      \begin{enumerate}
        \item \label{invsem_th1_dommeas} $X$ is $S$-domain measurable.
        \item \label{invsem_th1_dompar}  $X$ is not $S$-paradoxical.
        \item \label{invsem_th1_fol} $X$ is $S$-domain F\o lner.
        \item \label{invsem_th1_algame} $\mathcal{R}_{X, alg}$ is algebraically amenable.
        \item \label{invsem_th1_roeamtr} $\mathcal{R}_X$ has an amenable trace (and hence is a F\o lner C$^*$-algebra).
        \item \label{invsem_th1_roeinf}  $\mathcal{R}_X$ is not properly infinite.
        \item \label{invsem_th1_roekgr}  $\left[0\right] \neq \left[1\right]$ in the K$_0$-group of $\mathcal{R}_X$.
      \end{enumerate}
    \end{theorem}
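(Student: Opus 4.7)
The equivalences (\ref{invsem_th1_dommeas})--(\ref{invsem_th1_fol}) are already at our disposal via Theorem~\ref{invsem_th_ame0}, so the task is to attach the C$^*$-algebraic statements. My plan is to close the cycle (\ref{invsem_th1_fol}) $\Rightarrow$ (\ref{invsem_th1_algame}) $\Rightarrow$ (\ref{invsem_th1_roeamtr}) $\Rightarrow$ (\ref{invsem_th1_roeinf}) $\Rightarrow$ (\ref{invsem_th1_dompar}), and separately to establish the equivalence (\ref{invsem_th1_roeinf}) $\Leftrightarrow$ (\ref{invsem_th1_roekgr}).

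For (\ref{invsem_th1_fol}) $\Rightarrow$ (\ref{invsem_th1_algame}): starting from an $S$-domain F\o lner sequence $\{F_n\}$, I first apply Lemma~\ref{invsem_lemma_fequiv} to arrange each $F_n$ to lie in one $\approx$-class, and then Lemma~\ref{invsem_lemma_dimw} gives $\dim W_n=|F_n|^2$ for $W_n := P_{F_n}\mathcal{R}_{X,alg}P_{F_n}$. The intertwining $P_A V_s = V_s P_{s^*(A\cap D_{ss^*})}$ shows $V_s W_n\subset P_{s(F_n\cap D_{s^*s})}\mathcal{R}_{X,alg}P_{F_n}$, and a second application of Lemma~\ref{invsem_lemma_dimw} yields $\dim(V_s W_n+W_n)/\dim W_n \le |F_n\cup s(F_n\cap D_{s^*s})|/|F_n|$, which tends to $1$ by the $S$-domain F\o lner property; diagonal generators $f\in\ell^\infty(X)$ commute with $P_{F_n}$ and preserve $W_n$. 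For (\ref{invsem_th1_algame}) $\Rightarrow$ (\ref{invsem_th1_roeamtr}) I would apply the standard Day--Namioka procedure: each F\o lner subspace $W_n$ yields a normalized functional $\tau_n(T):=\mathrm{tr}(P_{W_n}TP_{W_n})/\dim W_n$, and a weak-$*$ ultralimit produces a state on $\mathcal{B}(\ell^2(X))$ that tracializes $\mathcal{R}_X$, i.e., a hypertrace whose restriction is an amenable trace on $\mathcal{R}_X$.

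The implication (\ref{invsem_th1_roeamtr}) $\Rightarrow$ (\ref{invsem_th1_roeinf}) is immediate since an amenable trace is a tracial state $\tau$, and proper infiniteness witnessed by $V^*V=W^*W=1$ and $VV^*+WW^*\le 1$ would yield $2=\tau(VV^*)+\tau(WW^*)\le\tau(1)=1$. For (\ref{invsem_th1_roeinf}) $\Rightarrow$ (\ref{invsem_th1_dompar}) I would prove the contrapositive: given an $S$-paradoxical decomposition $X=\bigsqcup_i s_i A_i=\bigsqcup_j t_j B_j$, set $V := \sum_i V_{s_i}^*P_{s_i A_i}$ and $W := \sum_j V_{t_j}^*P_{t_j B_j}$. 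The disjointness of the covers combined with the intertwining rule gives $V^*V=W^*W=1$ and $VV^*+WW^*=\sum_i P_{A_i}+\sum_j P_{B_j}\le P_X = 1$, so $1\in\mathcal{R}_X$ is properly infinite. Finally, for (\ref{invsem_th1_roeinf}) $\Leftrightarrow$ (\ref{invsem_th1_roekgr}): proper infiniteness of $1$ directly forces $[1]=2[1]=0$ in $K_0(\mathcal{R}_X)$ (yielding (\ref{invsem_th1_roekgr}) $\Rightarrow$ (\ref{invsem_th1_roeinf})), and the converse I would handle adapting the argument of~\cite[Theorem~4.6]{ALLW18R}, exploiting the abundance of projections $P_A\in\ell^\infty(X)\subset\mathcal{R}_X$ to lift the $K_0$-equality back into genuine proper infiniteness (or, equivalently, into a paradoxical decomposition of $X$).

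The main hurdle I expect is the $K_0$-implication (\ref{invsem_th1_roekgr}) $\Rightarrow$ (\ref{invsem_th1_roeinf}), where the stable equivalence encoded in $K_0$ must be de-stabilized inside $\mathcal{R}_X$ itself. This calls for a careful comparison between the type semigroup $\Typ(\alpha)$ and the Murray--von Neumann monoid $V(\mathcal{R}_X)$, together with the cancellation and comparison properties collected in Lemma~\ref{invsem_lemma_type}. The remaining implications in the cycle are essentially combinatorial once the correct F\o lner subspaces $W_n$ and paradoxical isometries $V,W$ are identified.
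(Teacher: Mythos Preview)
Your cycle (\ref{invsem_th1_fol}) $\Rightarrow$ (\ref{invsem_th1_algame}) $\Rightarrow$ (\ref{invsem_th1_roeamtr}) $\Rightarrow$ (\ref{invsem_th1_roeinf}) $\Rightarrow$ (\ref{invsem_th1_dompar}) is essentially the paper's argument, and the individual steps are handled the same way (the paper invokes \cite[Theorem~3.17]{ALLW18R} for (\ref{invsem_th1_algame}) $\Rightarrow$ (\ref{invsem_th1_roeamtr}) rather than spelling out the Day--Namioka ultralimit, and it also records a direct route (\ref{invsem_th1_dommeas}) $\Rightarrow$ (\ref{invsem_th1_roeamtr}) via the conditional expectation $\mathcal{E}$ and the mean of Proposition~\ref{invsem_lemma_mean}, but this is redundant once the cycle is closed). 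One small slip: in your functional $\tau_n$ the normalization should be the rank of $P_{F_n}$, i.e.\ $|F_n|$, not $\dim W_n=|F_n|^2$; what actually produces the hypertrace is the finite-rank projection $P_{F_n}\in\mathcal{B}(\ell^2(X))$, not the abstract F\o lner subspace $W_n$ of the algebra.

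The substantive difference is in how you attach (\ref{invsem_th1_roekgr}). You frame (\ref{invsem_th1_roeinf}) $\Leftrightarrow$ (\ref{invsem_th1_roekgr}) as the goal and single out the direction (\ref{invsem_th1_roeinf}) $\Rightarrow$ (\ref{invsem_th1_roekgr}) (equivalently, $[0]=[1]$ implies $1$ properly infinite) as the ``main hurdle'', proposing to de-stabilize the $K_0$-equality via a comparison of $\Typ(\alpha)$ with $V(\mathcal{R}_X)$. The paper avoids this entirely: it proves (\ref{invsem_th1_roeamtr}) $\Rightarrow$ (\ref{invsem_th1_roekgr}) and (\ref{invsem_th1_roekgr}) $\Rightarrow$ (\ref{invsem_th1_dompar}), both of which are one-liners. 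For the first, any trace induces a group homomorphism $K_0(\mathcal{R}_X)\to\mathbb{R}$ sending $[1]\mapsto 1$ and $[0]\mapsto 0$. For the second, your own isometries $V,W$ built from a paradoxical decomposition already give $[1]=[VV^*]+[WW^*]+[1-VV^*-WW^*]=2[1]+[\text{rest}]$, so $[1]+[\text{rest}]=0$ and hence $[1]=[0]$; contrapositively, $[0]\ne[1]$ forces $X$ non-paradoxical. Since (\ref{invsem_th1_roeamtr}) and (\ref{invsem_th1_dompar}) are already in your established cycle, this closes the equivalence for (\ref{invsem_th1_roekgr}) with no hard $K$-theoretic work. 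Your proposed de-stabilization argument would presumably go through, but it is not needed.
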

    \begin{proof}
      The equivalences (\ref{invsem_th1_dommeas}) $\Leftrightarrow$ (\ref{invsem_th1_dompar}) $\Leftrightarrow$ (\ref{invsem_th1_fol}) follow from Theorem~\ref{invsem_th_ame0}.

      (\ref{invsem_th1_dommeas}) $\Rightarrow$ (\ref{invsem_th1_roeamtr}). Consider the conditional expectation $\mathcal{E} \colon \mathcal{B}\left(\ell^2(X)\right) \rightarrow \ell^{\infty}(X)$ introduced in 
      Lemma~\ref{invsem_lemma_exp}. Since $X$ is $S$-domain measurable, by Proposition~\ref{invsem_lemma_mean}, there is a mean $m \colon \ell^{\infty}\left( X\right)$ $\rightarrow \mathbb{C}$ 
      satisfying $m\left(sf\right) = m\left(s^*sf\right)$. We claim that then $\phi\left(T\right) := m\left(\mathcal{E}\left(T\right)\right)$ is a hypertrace on $\mathcal{R}_X$. Indeed, observe that linearity, positivity and normalization follow from those of $m$ and $\mathcal{E}$. Hence we only have to prove the hypertrace property for the generators of $\mathcal{R}_X$. Note that since $\mathcal{E}$ is a conditional expectation we have $\phi\left(f T \right)=\phi\left(T f\right)$ for any $f\in \ell^{\infty}(X)$, $T \in \mathcal{B}\left(\ell^2(X)\right)$. To show the same relation for the generator $V_s$ note first that for any $s, t \in S$ the following relation holds:
      \begin{align}
        \mathcal{E}\left(V_s T\right) \left(x\right) & = \left\{  \begin{array}{lcc}
                                                          T_{s^*x, x} & \text{if} & x\in D_{ss^*} \\
                                                          0 & \text{if} & x\notin D_{ss^*}
                                                        \end{array} \right. \nonumber \\
        \mathcal{E}\left(T V_s\right) \left(y\right) & = \left\{  \begin{array}{lcc}
                                                          T_{y, sy} & \text{if} & y\in D_{s^*s} \\
                                                          0 & \text{if} & y\notin D_{s^*s}
                                                        \end{array} \right. .\nonumber
      \end{align}
      It follows from the action introduced in Eq.~(\ref{eq:action-S}) that 
      $s \; \mathcal{E} (TV_s)= \mathcal{E}(V_sT)$ and  $\mathcal{E} (TV_s) = \mathcal{E} (TV_s) P_{s^*s}$ and thus
      \begin{equation}
        \phi\left(V_sT \right)  = m\left(\mathcal{E}\left(V_sT \right)\right) = m\left(s\cdot \mathcal{E}\left(TV_s \right)\right) =  m\left(\mathcal{E}\left(TV_s \right) P_{s^*s}\right) =   m\left(\mathcal{E}\left(TV_s \right)\right)  =  \phi\left(T V_s\right), \nonumber
      \end{equation}
      where we used the invariance of the mean in the third equality. 

      (\ref{invsem_th1_roeamtr}) $\Rightarrow$ (\ref{invsem_th1_roeinf}). Suppose that $\mathcal{R}_X$ is properly infinite \textit{and} has a hypertrace $\phi$. Then we obtain a contradiction from 
      \begin{equation}
        1 = \phi\left(1\right) \geq \phi\left(W_1W_1^*\right) + \phi\left(W_2W_2^*\right) = \phi\left(W_1^*W_1\right) + \phi\left(W_2^*W_2\right) = \phi\left(1\right) + \phi\left(1\right) = 2, \nonumber
      \end{equation}
      where $W_1, W_2$ are the isometries witnessing the proper infiniteness of $1 \in \mathcal R_X$.

      (\ref{invsem_th1_roeinf}) $\Rightarrow$ (\ref{invsem_th1_dompar}). Suppose that $s_i, t_j, A_i, B_j$, $i = 1,\dots,n$, $j=1,\dots,m$, implement the $S$-paradoxicality of $X$, that is, $A_i\subseteq D_{s_i^*s_i}$, $B_j\subseteq D_{t_j^*t_j}$, and
      \begin{align}
        X & = s_1 A_1 \, \sqcup \, \dots \, \sqcup \, s_n A_n = t_1 B_1 \, \sqcup \, \dots \, \sqcup \, t_m B_m \nonumber \\
        & \supset  A_1 \, \sqcup \, \dots \, \sqcup \,  A_n \, \sqcup \, B_1 \, \sqcup \, \dots \, \sqcup \,  B_m. \nonumber
      \end{align}
      Consider now the operators
      \begin{equation}
        W_1 := V_{s_1^*} P_{s_1 A_1} + \dots + V_{s_n^*} P_{s_n A_n} \quad \text{and} \quad W_2 := V_{t_1^*} P_{t_1 B_1} + \dots + V_{t_m^*} P_{t_m B_m}. \nonumber
      \end{equation}
      These are both partial isometries, since $V_{s_i^*} P_{s_i A_i}$ and $V_{t_j^*} P_{t_j B_j}$ are partial isometries with pairwise orthogonal domain and range projections. Furthermore, $W_1^*W_1$ is the projection onto the union of the domains of $V_{s_i^*} P_{s_i A_i}$, which is 
      the whole space $\ell^2(X)$. The same argument proves that $W_2^*W_2 = 1$. Therefore, to prove the claim we just have to show that $W_1W_1^*$ and $W_2W_2^*$ are orthogonal projections. But these correspond to projections onto $\sqcup_{i = 1}^n  A_i$ and $\sqcup_{j = 1}^m B_j$, 
      respectively, which are disjoint sets in $X$.

      (\ref{invsem_th1_fol}) $\Rightarrow$ (\ref{invsem_th1_algame}). By Lemma~\ref{invsem_lemma_fequiv}, given $\varepsilon > 0$ and finite $\mathcal{F} \subset S$, there is a finite non-empty $F \subset X$ with exactly one $\approx$-class and such that
      \begin{equation}
        \left|s\left(F \cap D_{s^*s}\right) \setminus F\right| < \varepsilon \left|F\right|, \quad \text{for all} \; s \in \mathcal{F}. \nonumber
      \end{equation}
      Consider the space $W := P_F \mathcal{R}_X P_F = P_F \mathcal{R}_{X, alg} P_F$ and observe
      \begin{align}
        V_s W & = \left\{ V_s \, P_F \, T \, P_F \; \mid \; T \in \mathcal{R}_{X, alg} \right\} \nonumber \\
        & = \left\{ P_{s\left(F \cap D_{s^*s}\right)} \, V_s \, T \, P_F \; \mid \; T \in \mathcal{R}_{X, alg} \right\} \nonumber \\
        & \subset \left\{ P_{s\left(F \cap D_{s^*s} \setminus F\right)} \, V_s \, T \, P_F \; \mid \; T \in \mathcal{R}_{X, alg} \right\}  + \left\{ P_F \, P_{s\left(F \cap D_{s^*s}\right)} \, V_s \, T \, P_F \; \mid \; T \in \mathcal{R}_{X, alg} \right\}. \nonumber
      \end{align}
      Therefore
      \begin{equation}
        V_s W + W \subset P_{s\left(F \cap D_{s^*s}\right) \setminus F} \Big( \mathcal{R}_{X, alg} \Big) P_F + P_F \left( \mathcal{R}_{X, alg} \right) P_F\;, \nonumber
      \end{equation}
      and, by Lemma~\ref{invsem_lemma_dimw},
      \begin{equation}
        \text{dim}\left(W + V_s W\right) \leq \left|F\right|^2 + \left|F\right| \, \left|s\left(F \cap D_{s^*s}\right) \setminus F\right| \leq \left(1 + \varepsilon\right) \text{dim} \left(W\right), \nonumber
      \end{equation}
      which proves the algebraic amenability of $\mathcal{R}_{X, alg}$.

      (\ref{invsem_th1_algame}) $\Rightarrow$ (\ref{invsem_th1_roeamtr}). This follows from one of the main results of \cite{ALLW18R}, 
      which states that if a pre-C$^*$-algebra is algebraically amenable then its closure has an amenable trace (see \cite[Theorem~3.17]{ALLW18R}) and 
      hence is a F\o lner C*-algebra (cf., Definition~\ref{def:F-alg}~(\ref{item:1})).

      (\ref{invsem_th1_roeamtr}) $\Rightarrow$ (\ref{invsem_th1_roekgr}). Any trace $\phi$ on $\mathcal{R}_X$, by the universal property of the K$_0$ group, induces a group homomorphism $\phi_0 \colon K_0\left(\mathcal{R}_X\right) \rightarrow \mathbb{R}$ such that $\phi_0\left(\left[P\right]\right) = \phi\left(P\right)$ for any projection $P \in \mathcal{R}_X$. In this case $\phi_0\left(\left[1\right]\right) = \phi\left(1\right) = 1$ while $0 = \phi\left(0\right) = \phi_0\left(\left[0\right]\right)$. In particular, $\left[1\right] \neq \left[0\right]$ in the K$_0$ group of $\mathcal{R}_X$.

      (\ref{invsem_th1_roekgr}) $\Rightarrow$ (\ref{invsem_th1_dompar}). If $X$ is $S$-paradoxical, then it follows from Lemma~\ref{invsem_lemma_type}(3) and the same argument as in the implication (\ref{invsem_th1_roeinf}) $\Rightarrow$ (\ref{invsem_th1_dompar}) that $[1] = [1]+ [1]$ in $K_0(\mathcal R _X)$. Therefore $[1]=[0]$ in $K_0(\mathcal R _X)$.
    \end{proof}

    Theorem~\ref{invsem_th_ame1} can be generalized, along the lines of \cite{RS12}, to hold for any set $A \subset X$.
    \begin{corollary}\label{invsem_cor_ame_x}
      Let $S$ be a countable and discrete inverse semigroup with identity $1 \in S$, and let $\alpha \colon S\to \mathcal I (X)$ be  a representation of $S$ on $X$. Let $A \subset X$, then the following conditions are equivalent:
      \begin{enumerate}
        \item \label{invsem_cor_ame_meas} $A$ is $S$-domain measurable.
        \item \label{invsem_cor_ame_par} $A$ is not $S$-paradoxical.
        \item \label{invsem_cor_ame_weight} There is a tracial weight $\psi \colon \mathcal{R}_X^+ \rightarrow \left[0, \infty\right]$ such that $\psi\left(P_A\right) = 1$.
        \item \label{invsem_cor_ame_propinf} $P_A \in \mathcal{R}_X$ is not a properly infinite projection.
      \end{enumerate}
    \end{corollary}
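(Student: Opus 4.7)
The plan is to prove this corollary by relativizing the arguments of Theorems~\ref{invsem_th_ame0} and~\ref{invsem_th_ame1} from the full set $X$ to an arbitrary subset $A\subset X$, with the main new input being the use of tracial weights in place of tracial states (since $\mu(X)$ may now be infinite). First, the equivalence $(\ref{invsem_cor_ame_meas})\Leftrightarrow(\ref{invsem_cor_ame_par})$ follows directly from the proof of Theorem~\ref{invsem_th_ame0}: that argument proceeds via the type semigroup $\Typ(\alpha)$ and Tarski's Theorem~\ref{invsem_th_typetarski}, and the reasoning remains valid verbatim once one replaces $[X]$ by $[A]$. Namely, non-paradoxicality of $A$ gives $(n+1)\cdot[A]\not\le n\cdot[A]$ by Lemma~\ref{invsem_lemma_type}(\ref{invsem_lemma_type5}), so Tarski's theorem yields a semigroup homomorphism $\nu\colon \Typ(\alpha)\to [0,\infty]$ with $\nu([A])=1$, and setting $\mu(B):=\nu([B])$ produces the required domain measure with $\mu(A)=1$; the converse is immediate.

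For $(\ref{invsem_cor_ame_meas})\Rightarrow(\ref{invsem_cor_ame_weight})$, I would mimic the construction in Theorem~\ref{invsem_th_ame1}~$(\ref{invsem_th1_dommeas})\Rightarrow(\ref{invsem_th1_roeamtr})$, replacing the mean on $\ell^\infty(X)$ by a $[0,\infty]$-valued integral. Given the domain measure $\mu$ with $\mu(A)=1$, first define an additive functional $m_\mu\colon \ell^\infty(X)_+\to[0,\infty]$ by $m_\mu(P_B):=\mu(B)$ on characteristic functions, extending by linearity to positive simple functions (with the convention $0\cdot\infty=0$) and by monotone suprema to all of $\ell^\infty(X)_+$. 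Then set $\psi(T):=m_\mu(\mathcal{E}(T))$ for $T\in\mathcal{R}_X^+$, where $\mathcal{E}$ is the conditional expectation from Lemma~\ref{invsem_lemma_exp}. By construction $\psi(P_A)=\mu(A)=1$. The trace identity $\psi(V^*V)=\psi(VV^*)$ reduces, by density and the remark that $\mathcal{R}_X$ is the norm closure of the linear span of operators $V_sP_B$ with $B\subset D_{s^*s}$, to the case of such generators: a direct calculation gives $V^*V=P_B$ and $VV^*=P_{sB}$, so $\psi(V^*V)=\mu(B)=\mu(sB)=\psi(VV^*)$, where the crucial middle equality is precisely the domain-measurability condition.

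The implication $(\ref{invsem_cor_ame_weight})\Rightarrow(\ref{invsem_cor_ame_propinf})$ is an immediate computation: if $P_A=V^*V=W^*W\ge VV^*+WW^*$ with orthogonal range projections, then traciality and positivity of $\psi$ give
\begin{equation*}
  1=\psi(P_A)\ge \psi(VV^*)+\psi(WW^*)=\psi(V^*V)+\psi(W^*W)=2,
\end{equation*}
a contradiction. For $(\ref{invsem_cor_ame_propinf})\Rightarrow(\ref{invsem_cor_ame_par})$ I would argue by the contrapositive and repeat the construction of Theorem~\ref{invsem_th_ame1}~$(\ref{invsem_th1_roeinf})\Rightarrow(\ref{invsem_th1_dompar})$ anchored at $P_A$: if $A=\bigsqcup_i s_iA_i=\bigsqcup_j t_jB_j$ with all $A_i,B_j$ pairwise disjoint subsets of $A$ and $A_i\subset D_{s_i^*s_i}$, $B_j\subset D_{t_j^*t_j}$, then the partial isometries $W_1:=\sum_i V_{s_i^*}P_{s_iA_i}$ and $W_2:=\sum_j V_{t_j^*}P_{t_jB_j}$ satisfy $W_k^*W_k=P_A$ while $W_1W_1^*=\sum_i P_{A_i}$ and $W_2W_2^*=\sum_j P_{B_j}$ are orthogonal projections with $W_1W_1^*+W_2W_2^*\le P_A$, witnessing proper infiniteness of $P_A$ in $\mathcal{R}_X$.

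The main obstacle I anticipate is the rigorous handling of the tracial weight in $(\ref{invsem_cor_ame_meas})\Rightarrow(\ref{invsem_cor_ame_weight})$: although $m_\mu$ can be set up as an additive, positively homogeneous, $[0,\infty]$-valued functional on $\ell^\infty(X)_+$, one has to check that $\psi:=m_\mu\circ\mathcal{E}$ inherits these properties and is tracial on all of $\mathcal{R}_X^+$, not merely on the spanning elements $V_sP_B$, which requires a careful extension argument from $\mathcal{R}_{X,alg}^+$ using lower semicontinuity and monotone limits. All the remaining implications are essentially structural adaptations of their counterparts in Theorem~\ref{invsem_th_ame1}.
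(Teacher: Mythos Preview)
Your proposal is correct and follows essentially the same route as the paper: the equivalence $(\ref{invsem_cor_ame_meas})\Leftrightarrow(\ref{invsem_cor_ame_par})$ is obtained by relativizing Theorem~\ref{invsem_th_ame0} to $[A]$, and the implications $(\ref{invsem_cor_ame_weight})\Rightarrow(\ref{invsem_cor_ame_propinf})\Rightarrow(\ref{invsem_cor_ame_par})$ are the direct adaptations of Theorem~\ref{invsem_th_ame1} you describe. The only substantive difference is in $(\ref{invsem_cor_ame_meas})\Rightarrow(\ref{invsem_cor_ame_weight})$: instead of building $m_\mu$ by extending $\mu$ through simple functions and monotone suprema, the paper (following R\o rdam--Sierakowski) restricts $\mu$ to each $K\subset X$ of finite measure to obtain bounded functionals $m_K$ on $\ell^\infty(X)$, and then sets $m(f):=\sup_{K}m_K(f)$ for $f\ge 0$; the weight is $\psi=m\circ\mathcal{E}$. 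This construction sidesteps precisely the obstacle you flag, since each $m_K$ is already a well-defined positive linear functional and the supremum automatically yields an additive, positively homogeneous, lower-semicontinuous map on $\ell^\infty(X)_+$ satisfying $m(sf)=m(s^*sf)$; traciality of $\psi$ then follows from the same computation as in Theorem~\ref{invsem_th_ame1}. Your route would also work, but the well-definedness of the monotone extension for a merely finitely additive $\mu$ requires more care, whereas the paper's sup construction packages this cleanly.
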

    \begin{proof}
      Most of the proof is similar as in the reasoning leading to Theorem~\ref{invsem_th_ame1}. The only difference regards condition (\ref{invsem_cor_ame_weight}), and the replacement of the trace with a weight. To prove that (\ref{invsem_cor_ame_weight}) $\Rightarrow$ (\ref{invsem_cor_ame_meas}), consider the measure $\mu\left(B\right) = \psi\left(P_B\right)$. This $\mu$ will then be a measure on $X$ such that $\mu\left(A\right) = 1$. Furthermore, invariance follows from $\psi$ being tracial:
      \begin{equation}\label{eq_trace_meas}
        \mu\left(B\right) = \psi\left(V_{s^*s} P_B\right) = \psi\left(V_s P_B V_{s^*}\right) = \psi\left(P_{sB}\right) = \mu\left(\alpha_s\left(B\right)\right)
      \end{equation}
      for every $B \subset D_{s^*s}$.

      To prove (\ref{invsem_cor_ame_meas}) $\Rightarrow$ (\ref{invsem_cor_ame_weight}) we adapt the ideas in \cite[Proposition~5.5]{RS12}. Given a domain measure $\mu$ normalized at $A$, denote by $\mathcal{P}_{fin}(X)$ the (upwards directed) set of $K \subset X$ with finite measure, i.e., $\mu\left(K\right) < \infty$. Given $K \in \mathcal{P}_{fin}(X)$ consider the finite measure $\mu_K\left(B\right) = \mu\left(K \cap B\right)$ and extend it, as in Proposition~\ref{invsem_lemma_mean}, to a functional $m_K$. Given a non-negative $f \in \ell^{\infty}(X)$ define
      \begin{equation}
        m\left(f\right) := \sup_{K \in \mathcal{P}_{fin}(X)} m_K\left(f\right). \nonumber
      \end{equation}
      Then $m$ is $\mathbb{R}_+$-linear, lower-semicontinuous, normalized at $P_A$, and satisfies that $m\left(sf\right) = m\left(s^*sf\right)$, for every $s \in S, f \in \ell^{\infty}(X)$. Finally, the weight given by $\psi := m \circ \mathcal{E} \colon \mathcal{R}_X^+ \rightarrow \left[0, \infty\right]$ is a tracial weight.
    \end{proof}
    
    In our last result of the section we point out that one can translate the F\o lner sequences of $X$ onto F\o lner sequences of projections in $\mathcal{R}_X$ (cf., Definition~\ref{def:F-alg}(\ref{F-alg-2})). To prove it we first recall the following known result, which states that F\o lner sequences are preserved under C*-closure (see, e.g., \cite{B97}).
    \begin{lemma}\label{lem_fsclosure}
      Let $\mathcal{T} \subset \mathcal{B}\left(\mathcal{H}\right)$ be a set of operators on a separable Hilbert space $\mathcal{H}$. Suppose $\mathcal{T}$ has a F\o lner sequence $\left\{P_n\right\}_{n = 1}^{\infty}$. Then $\left\{P_n\right\}_{n = 1}^{\infty}$ is a F\o lner sequence for the C$^*$-algebra generated by $\mathcal{T}$.
    \end{lemma}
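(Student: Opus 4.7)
The plan is to show that the set
\[
  \mathcal{F}_{\{P_n\}}:=\Big\{A\in\mathcal{B}(\mathcal{H})\ \big|\ \|P_nA-AP_n\|_2/\|P_n\|_2\to 0\Big\}
\]
is a norm-closed $*$-subalgebra of $\mathcal{B}(\mathcal{H})$. Since by hypothesis $\mathcal{T}\subset\mathcal{F}_{\{P_n\}}$, this will force $C^*(\mathcal{T})\subset\mathcal{F}_{\{P_n\}}$, which is exactly the statement that $\{P_n\}_{n=1}^\infty$ is a F\o lner sequence for $C^*(\mathcal{T})$.

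First I would check that $\mathcal{F}_{\{P_n\}}$ is a $*$-vector space. Linearity follows from the triangle inequality applied to the commutator $[P_n,\cdot]$, and stability under adjoint is immediate from $\|[P_n,A^*]\|_2=\|[P_n,A]^*\|_2=\|[P_n,A]\|_2$ (using that $P_n=P_n^*$). Then I would establish the multiplicative closure, which is the heart of the argument: from the Leibniz-type identity
\[
  [P_n,AB] \;=\; [P_n,A]\,B + A\,[P_n,B],
\]
combined with the standard estimate $\|XY\|_2\le \|X\|_2\|Y\|$ for the Hilbert-Schmidt norm, one obtains
\[
  \frac{\|[P_n,AB]\|_2}{\|P_n\|_2} \;\le\; \|B\|\,\frac{\|[P_n,A]\|_2}{\|P_n\|_2} \;+\; \|A\|\,\frac{\|[P_n,B]\|_2}{\|P_n\|_2},
\]
so that $AB\in\mathcal{F}_{\{P_n\}}$ whenever $A,B\in\mathcal{F}_{\{P_n\}}$.

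Finally I would verify norm-closedness. Given $A_m\in\mathcal{F}_{\{P_n\}}$ with $A_m\to A$ in operator norm, the bound
\[
  \frac{\|[P_n,A]\|_2}{\|P_n\|_2} \;\le\; 2\|A-A_m\| \;+\; \frac{\|[P_n,A_m]\|_2}{\|P_n\|_2}
\]
shows $A\in\mathcal{F}_{\{P_n\}}$ by first choosing $m$ large and then $n$ large. There is no serious obstacle; the only point that requires care is the use of the inequality $\|XY\|_2\le \|X\|_2\|Y\|$ (and its symmetric version) in the multiplicative step, which is why the $2$-norm is the correct tool here and why the argument works at the C$^*$-level rather than only for the generating set $\mathcal{T}$.
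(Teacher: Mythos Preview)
Your argument is correct and is exactly the standard route: one shows that the set $\mathcal{F}_{\{P_n\}}$ of operators asymptotically commuting with the $P_n$ in Hilbert--Schmidt norm is a norm-closed $*$-subalgebra of $\mathcal{B}(\mathcal{H})$, using the Leibniz identity for commutators together with the two-sided estimate $\|XY\|_2\le \min\{\|X\|_2\|Y\|,\,\|X\|\|Y\|_2\}$. The paper does not actually supply a proof of this lemma; it simply records it as a known fact with a reference to B\'edos~\cite{B97}, where precisely this argument appears. So your proposal fills in what the paper leaves as a citation, and there is nothing to compare beyond that.
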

    \begin{proposition}\label{invsem_th_folnsqin}
      Let $S$ be a countable and discrete inverse semigroup and $\alpha \colon S \rightarrow \mathcal{I}(X)$ be a representation. If $X$ is domain measurable, then there is a sequence of projections 
      $\left\{P_n\right\}_{n = 1}^{\infty} \subset \mathcal{R}_X$ which is a F\o lner sequence of projections for each operator $T \in \mathcal{R}_X$.
    \end{proposition}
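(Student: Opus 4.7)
The plan is to exhibit the sequence of projections $P_n := P_{F_n}$, where $\{F_n\}_{n\in\mathbb{N}}\subset X$ is an $S$-domain F\o lner sequence exhausting $S$ in the following diagonal sense: enumerate $S = \{s_1, s_2, \dots\}$ and, invoking the implication (\ref{invsem_th1_dommeas}) $\Rightarrow$ (\ref{invsem_th1_fol}) of Theorem~\ref{invsem_th_ame1}, choose $F_n$ so that $F_n$ is $(1/n, \mathcal{F}_n)$-domain F\o lner for the symmetric finite set $\mathcal{F}_n := \{s_1, s_1^*, \dots, s_n, s_n^*\}$. Note that $\|P_n\|_2 = \sqrt{|F_n|}$ in the Hilbert-Schmidt norm.

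By Lemma~\ref{lem_fsclosure} it is enough to verify the F\o lner condition on a generating set of $\mathcal{R}_X$, so I would check it separately on $\ell^\infty(X)$ and on the partial isometries $\{V_s \mid s \in S\}$. For $f \in \ell^\infty(X)$ the commutator $P_n f - f P_n$ vanishes identically, since both operators are diagonal in the canonical basis $\{\delta_x\}_{x\in X}$. For the generator $V_s$ the key calculation uses the intertwining relation $P_A V_s = V_s P_{s^*(A\cap D_{ss^*})}$ recalled in the excerpt: a direct evaluation on basis vectors gives
\begin{equation}
(P_n V_s - V_s P_n)\delta_x = \begin{cases} \delta_{sx} & \text{if } x \in s^*(F_n\cap D_{ss^*})\setminus (F_n\cap D_{s^*s}), \\ -\delta_{sx} & \text{if } x\in (F_n\cap D_{s^*s})\setminus s^*(F_n\cap D_{ss^*}), \\ 0 & \text{otherwise.}\end{cases} \nonumber
\end{equation}
Since $s$ is injective on $D_{s^*s}$ and $s^*$ on $D_{ss^*}$, the cardinality of the support of this operator is bounded by $|s(F_n\cap D_{s^*s})\setminus F_n| + |s^*(F_n\cap D_{ss^*})\setminus F_n|$, and the matrix entries are $\pm 1$ or $0$, so
\begin{equation}
\|P_n V_s - V_s P_n\|_2^2 \leq |s(F_n\cap D_{s^*s})\setminus F_n| + |s^*(F_n\cap D_{ss^*})\setminus F_n|. \nonumber
\end{equation}

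For any fixed $s = s_k \in S$ and all $n \geq k$ both $s_k$ and $s_k^*$ belong to $\mathcal{F}_n$, so the right-hand side is at most $(2/n)|F_n|$. Consequently
\begin{equation}
\frac{\|P_n V_s - V_s P_n\|_2}{\|P_n\|_2} \leq \sqrt{2/n} \xrightarrow{n\to\infty} 0, \nonumber
\end{equation}
which, together with the vanishing commutator on $\ell^\infty(X)$, shows that $\{P_n\}$ is a F\o lner sequence for the generating set $\{V_s \mid s\in S\}\cup\ell^\infty(X)$. Applying Lemma~\ref{lem_fsclosure} extends the F\o lner property to every $T\in\mathcal{R}_X$, completing the argument.

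The only genuinely delicate step is the commutator estimate for $V_s$; the rest is bookkeeping (choice of the diagonal F\o lner sequence via the countability of $S$, and the easy reduction on generators). The potential obstacle is remembering to include $s^*$ in $\mathcal{F}_n$ so that both summands in the rank bound above are controlled simultaneously, since the $S$-domain F\o lner condition is asymmetric as $s$ and $s^*$ have different domains and ranges.
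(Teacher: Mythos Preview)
Your proposal is correct and follows essentially the same approach as the paper: both take $P_n=P_{F_n}$ for a domain F\o lner sequence, invoke Lemma~\ref{lem_fsclosure} to reduce to generators, and obtain the identical commutator bound $\|P_nV_s-V_sP_n\|_2^2\le |s(F_n\cap D_{s^*s})\setminus F_n|+|s^*(F_n\cap D_{ss^*})\setminus F_n|$. The only cosmetic difference is that the paper verifies the F\o lner condition directly on the products $V_sP_A$, whereas you split the generating set into $\{V_s\}$ and $\ell^\infty(X)$ and treat them separately; your explicit diagonal choice of $\{F_n\}$ (including $s^*$ in $\mathcal F_n$) makes transparent a step the paper leaves implicit.
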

    \begin{proof}
      Choose a $S$-domain F\o lner sequence $\left\{F_n\right\}_{n = 1}^{\infty}$ of $X$, that exists since $X$ is domain measurable (see Theorem~\ref{invsem_th_ame1}). Consider the orthogonal projection $P_n$ onto $\text{span}\left\{\delta_f \mid f \in F_n\right\} \subset \ell^2(X)$. Clearly, $P_n$ lies within $\mathcal{R}_X$, so, by Lemma~\ref{lem_fsclosure}, it is enough to show that it is a F\o lner sequence for all generating elements $V_sP_A$, $s\in S$, $A\subset X$. For this we compute
      \begin{align}
        \left(V_s P_A P_n\right) \left(\delta_x\right) & = \left\{  \begin{array}{lcc}
                                                          \delta_{sx} \;,& \text{if} & x \in D_{s^*s} \cap F_n \cap A \\
                                                          0 \;, & & \text{otherwise}
                                                        \end{array} \right. \nonumber \\
        \left(P_n V_s P_A\right) \left(\delta_x\right) & = \left\{  \begin{array}{lcc}
                                                          \delta_{sx}\;, & \text{if} & x \in A \cap s^*\left(F_n  \cap D_{ss^*}\right) \\
                                                          0\;, & & \text{otherwise}.
                                                        \end{array} \right. \nonumber
      \end{align}
      Thus we have the following estimates in the Hilbert-Schmidt norm:
      \begin{align}
        \left|\left|V_s P_A P_n - P_n V_s P_A\right|\right|_2^2 & \leq \left|\left\{x \in F_n \cap D_{s^*s} \mid sx \not\in F_n\right\}\right|
                                                                     + \left|s^*\left(F_n \cap D_{ss^*}\right) \setminus F_n\right| \nonumber \\
        & = \left|s\left(F_n \cap D_{s^*s}\right) \setminus F_n\right| + \left|s^*\left(F_n \cap D_{ss^*}\right) \setminus F_n\right|. \nonumber
      \end{align}
      Noting that $\left|\left|P_n\right|\right|_2^2 = \left|F_n\right|$ the result follows from normalizing by $\left|F_n\right|$ and taking limits on both sides
      of the inequality.
    \end{proof}

    \subsection{Traces and amenable traces}
    The aim of this section is to prove that either all traces in $\mathcal{R}_X$ are amenable or this algebra has no traces at all. Similar results hold for nuclear C*-algebras (see \cite[Proposition~6.3.4]{BO08}) and for
    uniform Roe algebras over metric spaces (see \cite[Corollary~4.15]{ALLW18R}). We begin by proving that traces of $\mathcal{R}_X$ factor through $\ell^{\infty}(X)$ via the conditional expectation $\mathcal{E}$ (see Lemma~\ref{invsem_lemma_exp}).
    \begin{lemma}\label{invsem_lemma_trcond}
      Let $\alpha \colon S \rightarrow \mathcal{I}(X)$ be a representation and let $\phi \colon \mathcal{R}_X \rightarrow \mathbb{C}$ be a trace. Then $\phi\left(T\right) = \phi\left(\mathcal{E}\left(T\right)\right)$ for every $T \in \mathcal{R}_X$, where $\mathcal{E}$ denotes the canonical conditional expectation onto $\ell^{\infty}(X)$.
    \end{lemma}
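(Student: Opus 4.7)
The plan is to reduce the statement, by norm-continuity of both $\phi$ and $\mathcal{E}$ and by linearity, to checking the identity $\phi(T)=\phi(\mathcal{E}(T))$ on the dense spanning set consisting of operators of the form $T=V_s P_A$ with $s\in S$ and $A\subset D_{s^*s}$. This is legitimate because the excerpt already observes that such operators span a norm-dense $*$-subalgebra of $\mathcal{R}_X$, and because a tracial state on a C$^*$-algebra is automatically norm-continuous, as is the conditional expectation $\mathcal{E}$ from Lemma~\ref{invsem_lemma_exp}.

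Given $T=V_s P_A$, I would split the set $A$ into its fixed and moving parts with respect to $s$: set $A_{\mathrm{fix}}:=\{x\in A\mid sx=x\}$ and $A_{\mathrm{mov}}:=A\setminus A_{\mathrm{fix}}$. A direct computation (using only the definition of $V_s$ and of $\mathcal{E}$) yields $V_s P_{A_{\mathrm{fix}}}=P_{A_{\mathrm{fix}}}$ and $\mathcal{E}(V_s P_{A_{\mathrm{mov}}})=0$, so that $\mathcal{E}(V_s P_A)=P_{A_{\mathrm{fix}}}=V_s P_{A_{\mathrm{fix}}}$. Consequently the task reduces to proving the off-diagonal vanishing $\phi(V_s P_{A_{\mathrm{mov}}})=0$.

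For this I would use a three-colouring argument analogous to the classical one for groups. Consider the graph $\Gamma$ on vertex set $A_{\mathrm{mov}}$ whose edges are the pairs $\{x,sx\}$ with both endpoints in $A_{\mathrm{mov}}$. Because $s$ is a partial bijection, each vertex has at most one out-neighbour ($sx$) and at most one in-neighbour ($s^*x$ when defined), so $\Gamma$ has maximum degree $\leq 2$. Any such graph decomposes into a disjoint union of paths and cycles, and therefore admits a proper $3$-colouring; this gives a partition $A_{\mathrm{mov}}=B_1\sqcup B_2\sqcup B_3$ with $sB_i\cap B_i=\emptyset$ for $i=1,2,3$. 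Since $B_i\subset D_{s^*s}$, the range of $V_s P_{B_i}$ lies in $\mathrm{span}\{\delta_y\mid y\in sB_i\}$, so $V_s P_{B_i}=P_{sB_i}V_s P_{B_i}$. Applying the trace property and the fact that $P_{B_i}P_{sB_i}=P_{B_i\cap sB_i}=0$, I get
\[
  \phi(V_s P_{B_i})=\phi(P_{sB_i}V_s P_{B_i})=\phi(V_s P_{B_i}P_{sB_i})=\phi(V_s P_{B_i\cap sB_i})=0,
\]
and summing over $i$ gives $\phi(V_s P_{A_{\mathrm{mov}}})=0$, finishing the proof.

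The only mildly delicate step is the three-colouring of $\Gamma$, and even there the main point is simply that the graph has maximum degree at most two, so it is a standard Brooks-type observation rather than a genuine obstacle. All other steps amount to routine unwinding of definitions together with the trace property.
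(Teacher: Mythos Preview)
Your proof is correct and follows essentially the same route as the paper's: reduce to generators $V_sP_A$, split $A$ into fixed and moving parts, and use a three-colouring of the graph on the moving part (vertices $x$, edges $\{x,sx\}$, maximum degree $\le 2$) to kill the off-diagonal contribution via the trace property. The only cosmetic difference is in the final computation: the paper writes $\phi(V_sP_{B_i})=\phi(P_{B_i}V_sP_{B_i})=0$ directly (since $P_{B_i}V_sP_{B_i}=0$ as an operator), whereas you route through $P_{sB_i}$ and cycle; both are equivalent.
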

    \begin{proof}
      Since the closure of the linear span of the elements of the form $V_s P_A$, $s \in S$, $A \subset D_{s^*s}$, is dense in $\mathcal{R}_X$ it is enough to show the claim for these elements.

      First suppose that $A$ has no fixed points under $s$, i.e., $\left\{a \in A \mid sa = a\right\} = \emptyset$. Consider the graph whose vertices are the elements of $A$ and such that two vertices $a, b$ are joined by an edge if and only if $b = sa$. Since the action of $s$ is injective on $A$ it is clear that every vertex has at most degree $2$ and no loops. Therefore it can be colored by $3$ colors and, thus, there is a partition $A=B_1 \sqcup B_2 \sqcup B_3$ such that if $a \in B_i$ then $sa \not\in B_i$. This allows us to decompose $V_s P_A$ as
      \begin{equation}
        V_s P_A = V_s P_{B_1} + V_s P_{B_2} + V_s P_{B_3}. \nonumber
      \end{equation}
      Taking traces on each side of the equality gives
      \begin{equation}
        \phi\left(V_s P_A\right) = \phi\left(P_{B_1} V_s P_{B_1}\right) + \phi\left(P_{B_2} V_s P_{B_2}\right) + \phi\left(P_{B_3} V_s P_{B_3}\right) = 0\;. \nonumber
      \end{equation}
      But in this case we also have $\mathcal{E}\left(V_s P_A\right) = 0$ and the equality $\phi=m\circ\mathcal{E}$ follows.

      Second, for arbitrary $s \in S$ and $A \subset D_{s^*s}$ one can decompose $A = B \sqcup C$, where $B := \left\{a \in A \mid sa = a\right\}$ is the set of fixed points and $C := \left\{a \in A \mid sa \neq a \right\}$. In this case
      \begin{equation}
        V_s P_A = V_s P_B + V_s P_C. \nonumber
      \end{equation}
      By the above paragraph it follows that $\phi\left(V_sP_C\right) = 0$, while $V_s P_B$ is a projection in $\ell^{\infty}\left(S\right)$. Thus
      \begin{equation}
        \phi\left(V_s P_A\right) = \phi\left(V_s P_B\right) + \phi\left(V_s P_C\right) = \phi\left(\mathcal{E}\left(V_s P_B\right)\right) = \phi\left(\mathcal{E}\left(V_s P_A\right)\right), \nonumber
      \end{equation}
      which concludes the proof.
    \end{proof}
    The following theorem is a consequence of Theorem~\ref{invsem_th_ame1} and the latter lemma.
    \begin{theorem}\label{invsem_thtr}
      Let $S$ be a countable and discrete inverse semigroup with identity $1 \in S$, and let $\alpha \colon S\to \mathcal I (X)$ be  a representation. Consider a positive linear functional $\phi$ on $\mathcal{R}_X$. Then the following conditions are equivalent:
      \begin{enumerate}
        \item \label{invsem_thtr_roeamtr} $\phi$ is an amenable trace on $\mathcal{R}_X$.
        \item \label{invsem_thtr_roetr}   $\phi$ is a trace on $\mathcal{R}_X$.
        \item \label{invsem_thtr_form}    $\phi = \phi |_{\ell^{\infty}(X)} \circ \mathcal{E}$ and the measure $\mu\left(A\right) := \phi\left(P_A\right)$ satisfies domain measurability, i.e., $\mu\left(A\right) = \mu\left(sA\right)$ for all $s \in S$, $A \subset D_{s^*s}$.
      \end{enumerate}
    \end{theorem}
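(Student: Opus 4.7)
The plan is to close the cycle (\ref{invsem_thtr_roeamtr}) $\Rightarrow$ (\ref{invsem_thtr_roetr}) $\Rightarrow$ (\ref{invsem_thtr_form}) $\Rightarrow$ (\ref{invsem_thtr_roeamtr}). The implication (\ref{invsem_thtr_roeamtr}) $\Rightarrow$ (\ref{invsem_thtr_roetr}) is immediate from the definition of an amenable trace, since the extension hypertrace on $\mathcal{B}(\ell^2(X))$ in particular forces $\phi$ to satisfy the trace identity on $\mathcal{R}_X$.

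For (\ref{invsem_thtr_roetr}) $\Rightarrow$ (\ref{invsem_thtr_form}), the factorization $\phi = \phi|_{\ell^{\infty}(X)} \circ \mathcal{E}$ is precisely Lemma~\ref{invsem_lemma_trcond}. To verify domain measurability of $\mu(A) := \phi(P_A)$, fix $s \in S$ and $A \subset D_{s^*s}$. A direct computation, using that $\alpha_s$ restricts to a bijection $D_{s^*s} \to D_{ss^*}$, yields $V_s P_A V_{s^*} = P_{sA}$, and therefore the trace property gives
\[
\mu(sA) = \phi(P_{sA}) = \phi(V_s P_A V_{s^*}) = \phi(V_{s^*} V_s P_A) = \phi(P_{D_{s^*s}} P_A) = \phi(P_A) = \mu(A),
\]
where the fourth equality uses $V_{s^*} V_s = P_{D_{s^*s}}$ and the inclusion $A \subset D_{s^*s}$.

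For (\ref{invsem_thtr_form}) $\Rightarrow$ (\ref{invsem_thtr_roeamtr}), set $m := \phi|_{\ell^{\infty}(X)}$. By hypothesis, the measure $A \mapsto m(P_A)$ is a domain measure on $X$, so Proposition~\ref{invsem_lemma_mean} yields the invariance $m(sf) = m(f P_{s^*s})$ for every $s \in S$ and $f \in \ell^{\infty}(X)$. Define $\tilde\phi := m \circ \mathcal{E}$ on all of $\mathcal{B}(\ell^2(X))$; this is well-defined by Lemma~\ref{invsem_lemma_exp}, and it genuinely extends $\phi$ by the factorization assumption in (\ref{invsem_thtr_form}). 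The verification that $\tilde\phi$ is a hypertrace for $\mathcal{R}_X$ then repeats verbatim the corresponding argument in the implication (\ref{invsem_th1_dommeas}) $\Rightarrow$ (\ref{invsem_th1_roeamtr}) of Theorem~\ref{invsem_th_ame1}: the identity $\tilde\phi(fT) = \tilde\phi(Tf)$ for $f \in \ell^{\infty}(X)$ is a consequence of $\mathcal{E}$ being a conditional expectation onto $\ell^{\infty}(X)$, while the relation $\tilde\phi(V_s T) = \tilde\phi(T V_s)$ follows from the two identities $s \cdot \mathcal{E}(T V_s) = \mathcal{E}(V_s T)$ and $\mathcal{E}(T V_s) = \mathcal{E}(T V_s) P_{s^*s}$, combined with the invariance of $m$.

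No genuine new difficulty arises, since the substantive work has already been carried out in Lemma~\ref{invsem_lemma_trcond} and in the invariance computation of Theorem~\ref{invsem_th_ame1}; the present statement repackages those results into a clean characterization. The only point requiring care is verifying, in the step (\ref{invsem_thtr_form}) $\Rightarrow$ (\ref{invsem_thtr_roeamtr}), that the candidate extension $\tilde\phi$ really agrees with $\phi$ on $\mathcal{R}_X$, but this is immediate from the factorization assumed in (\ref{invsem_thtr_form}).
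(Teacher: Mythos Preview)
Your proof is correct and follows essentially the same route as the paper: the cycle $(\ref{invsem_thtr_roeamtr}) \Rightarrow (\ref{invsem_thtr_roetr}) \Rightarrow (\ref{invsem_thtr_form}) \Rightarrow (\ref{invsem_thtr_roeamtr})$ via Lemma~\ref{invsem_lemma_trcond}, the trace computation $\phi(P_{sA}) = \phi(V_s P_A V_{s^*}) = \phi(V_{s^*}V_s P_A) = \phi(P_A)$ (which is exactly Eq.~(\ref{eq_trace_meas})), and the hypertrace construction from the proof of Theorem~\ref{invsem_th_ame1}. You are slightly more explicit than the paper in noting that the extension $\tilde\phi = m \circ \mathcal{E}$ genuinely restricts to $\phi$ on $\mathcal{R}_X$, which is a welcome clarification.
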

    \begin{proof}
      The fact that (\ref{invsem_thtr_roeamtr}) $\Rightarrow$ (\ref{invsem_thtr_roetr}) is obvious. For (\ref{invsem_thtr_roetr}) $\Rightarrow$ (\ref{invsem_thtr_form}) note that by Lemma~\ref{invsem_lemma_trcond} it remains only to prove that $\mu\left(A\right) = \mu\left(sA\right)$ for every $s \in S$, $A \subset D_{s^*s}$. This follows from $\phi$ being a trace and Eq.~(\ref{eq_trace_meas}). Finally, (\ref{invsem_thtr_form}) $\Rightarrow$ (\ref{invsem_thtr_roeamtr}) is proved as the implication (\ref{invsem_th1_dommeas}) $\Rightarrow$ (\ref{invsem_th1_roeamtr}) in Theorem~\ref{invsem_th_ame1}.
    \end{proof}

    We summarize next some important consequences of the previous theorems.
    \begin{corollary}
      Let $S$ be a countable and discrete inverse semigroup with identity $1 \in S$, and let $\alpha \colon S\to \mathcal I (X)$ be a representation. Then:
      \begin{enumerate}
        \item $X$ is $S$-domain measurable if and only if there is a trace on $\mathcal{R}_X$, in which case every trace on $\mathcal{R}_X$ is amenable.
        \item There is a canonical bijection between the space of measures on $X$ such that $\mu\left(A\right) = \mu\left(sA\right)$ when $s \in S$, $A \subset D_{s^*s}$ and the space of traces of $\mathcal{R}_X$.
      \end{enumerate}
    \end{corollary}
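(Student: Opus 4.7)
The proof is essentially a bookkeeping exercise that packages together the two preceding theorems, so the plan is to cite them directly and then spell out the bijection in part~(2).

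For part~(1), the plan is to chain equivalences. If $X$ is $S$-domain measurable, then Theorem~\ref{invsem_th_ame1}, (\ref{invsem_th1_dommeas}) $\Leftrightarrow$ (\ref{invsem_th1_roeamtr}), yields an amenable trace on $\mathcal R_X$, which is in particular a trace. Conversely, assume $\mathcal R_X$ admits a trace $\phi$. Theorem~\ref{invsem_thtr}, (\ref{invsem_thtr_roetr}) $\Rightarrow$ (\ref{invsem_thtr_roeamtr}), upgrades $\phi$ to an amenable trace, and Theorem~\ref{invsem_th_ame1}, (\ref{invsem_th1_roeamtr}) $\Rightarrow$ (\ref{invsem_th1_dommeas}), then returns the domain measurability of $X$. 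The final clause (every trace is amenable) is precisely the implication (\ref{invsem_thtr_roetr}) $\Rightarrow$ (\ref{invsem_thtr_roeamtr}) of Theorem~\ref{invsem_thtr}.

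For part~(2), I would exhibit the bijection explicitly. Let $\mathcal T(\mathcal R_X)$ denote the set of traces on $\mathcal R_X$ and let $\mathcal M$ denote the set of probability measures $\mu$ on $X$ such that $\mu(A)=\mu(sA)$ whenever $A\subset D_{s^*s}$. Define
\[
  \Phi \colon \mathcal T(\mathcal R_X) \longrightarrow \mathcal M, \qquad \Phi(\phi)(A) := \phi(P_A),
\]
which lands in $\mathcal M$ by Theorem~\ref{invsem_thtr}, (\ref{invsem_thtr_roetr}) $\Rightarrow$ (\ref{invsem_thtr_form}). In the reverse direction, given $\mu\in\mathcal M$, extend it by linearity and norm-continuity to a state $m_\mu$ on $\ell^\infty(X)$ (the standard extension using simple functions), and set
\[
  \Psi(\mu) := m_\mu \circ \mathcal E \, \colon \, \mathcal R_X \longrightarrow \mathbb C,
\]
where $\mathcal E$ is the conditional expectation of Lemma~\ref{invsem_lemma_exp}. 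Theorem~\ref{invsem_thtr}, (\ref{invsem_thtr_form}) $\Rightarrow$ (\ref{invsem_thtr_roetr}), ensures $\Psi(\mu)\in\mathcal T(\mathcal R_X)$.

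It remains to check $\Phi$ and $\Psi$ are mutually inverse. On the one hand,
\[
  \Phi(\Psi(\mu))(A) = (m_\mu\circ\mathcal E)(P_A) = m_\mu(P_A) = \mu(A),
\]
since $P_A\in\ell^\infty(X)$ and $\mathcal E$ is the identity there. On the other hand, given $\phi\in\mathcal T(\mathcal R_X)$, the state $m_{\Phi(\phi)}$ agrees with $\phi|_{\ell^\infty(X)}$ on every characteristic function and hence on all of $\ell^\infty(X)$ by linearity and continuity; therefore
\[
  \Psi(\Phi(\phi)) = \phi|_{\ell^\infty(X)}\circ\mathcal E = \phi,
\]
where the last equality is the factorization clause (\ref{invsem_thtr_form}) of Theorem~\ref{invsem_thtr}. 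No step presents a genuine obstacle; the only point requiring a moment of care is verifying that the extension of $\mu$ to a state on $\ell^\infty(X)$ is well defined and unique, which is standard once one notes that characteristic functions are norm-total in the self-adjoint part of $\ell^\infty(X)$ and $\mu$ is bounded and finitely additive.
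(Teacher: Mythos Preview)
Your proof is correct and is exactly the natural unpacking of the corollary from Theorems~\ref{invsem_th_ame1} and~\ref{invsem_thtr}; the paper itself omits the proof entirely, treating the statement as an immediate consequence of those two results, so your write-up simply makes explicit what the authors left to the reader.
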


    \subsection{Traces in amenable inverse semigroups}\label{sec_invsem_ame}
    The goal of this last section is to state the analogue of Theorem~\ref{invsem_th_ame1} but considering amenable semigroups instead of the weaker notion  of domain measurable ones. That is, we will give additional C*-algebraic characterizations of amenable inverse semigroups than those in Theorem~\ref{invsem_th_ame01}. The proof of the following result is straightforward (see the proof of Theorem~\ref{invsem_th_ame1}).
    \begin{theorem}\label{invsem_th_ame2}
      Let $S$ be a countable and discrete inverse semigroup with identity $1 \in S$ and let $\alpha \colon S \to \mathcal I (X)$ be a representation. Then the following conditions are equivalent:
      \begin{enumerate}
        \item \label{invsem_th_ame2ame} $X$ is $S$-amenable.
        \item \label{invsem_th_ame2par} $D_e$ is not $S$-paradoxical for any $e \in E\left(S\right)$.
        \item \label{invsem_th_ame2atr} $\mathcal{R}_X$ has a trace $\phi$ such that $\phi\left(V_e\right) = 1$ for all $e \in E\left(S\right)$.
        \item \label{invsem_th_ame2inf} No projection $V_e \in \mathcal{R}_X$ is properly infinite.
      \end{enumerate}
    \end{theorem}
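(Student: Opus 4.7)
\medskip
\noindent\textbf{Proof plan.} The plan is to close the cycle $(1)\Rightarrow(3)\Rightarrow(4)\Rightarrow(2)\Rightarrow(1)$, using Theorem~\ref{invsem_th_ame01} to handle $(1)\Leftrightarrow(2)$ and adapting the C*-algebraic arguments of Theorem~\ref{invsem_th_ame1} to the refined setting of the projections $V_e$, $e\in E(S)$. The key structural observation underlying the entire proof is that, for any idempotent $e\in E(S)$, the element $\alpha_e$ is an idempotent of $\mathcal{I}(X)$, hence the identity on its domain $D_e = D_{e^*e}$; consequently $V_e = P_{D_e}$, and the localization condition for an invariant measure $\mu$ on $X$ translates into the normalization $\mu(D_e)=1$ for every $e\in E(S)$.

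For $(1)\Rightarrow(3)$ I would start from an $S$-invariant measure $\mu$ on $X$. The construction in Proposition~\ref{invsem_lemma_mean} produces a state $m$ on $\ell^\infty(X)$ with $m(sf) = m(s^*sf)$, and the localization of $\mu$ yields $m(P_{D_e})=\mu(D_e)=1$ for every $e\in E(S)$. Defining $\phi := m\circ \mathcal{E}$ exactly as in the $(1)\Rightarrow(5)$ step of Theorem~\ref{invsem_th_ame1} gives a hypertrace on $\mathcal{R}_X$, and in particular $\phi(V_e) = m(P_{D_e}) = 1$. The implication $(3)\Rightarrow(4)$ is the standard trace obstruction: if $V_e = V^*V = W^*W \geq VV^* + WW^*$ with $V,W\in\mathcal{R}_X$, then
\[
1 = \phi(V_e) \geq \phi(VV^*) + \phi(WW^*) = \phi(V^*V) + \phi(W^*W) = 2\phi(V_e) = 2,
\]
a contradiction.

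For $(4)\Rightarrow(2)$ I would mirror the argument $(6)\Rightarrow(2)$ of Theorem~\ref{invsem_th_ame1}, localized at an arbitrary $e\in E(S)$. Given an $S$-paradoxical decomposition $D_e = \bigsqcup_{i} s_i A_i = \bigsqcup_{j} t_j B_j$ with $A_i\subset D_{s_i^* s_i}$, $B_j\subset D_{t_j^* t_j}$ and the $A_i, B_j$ pairwise disjoint subsets of $D_e$, the operators
\[
W_1 := \sum_{i=1}^n V_{s_i^*}\, P_{s_i A_i}, \qquad W_2 := \sum_{j=1}^m V_{t_j^*}\, P_{t_j B_j}
\]
lie in $\mathcal{R}_X$ and, as in the proof of Theorem~\ref{invsem_th_ame1}, are partial isometries with $W_1^* W_1 = W_2^* W_2 = P_{D_e} = V_e$ and mutually orthogonal range projections $P_{\sqcup_i A_i}$, $P_{\sqcup_j B_j}$ whose sum is dominated by $V_e$. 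This exhibits $V_e$ as properly infinite, contradicting $(4)$.

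The main technical point to verify carefully is the identification $V_e = P_{D_e}$ together with the fact that the localization condition on $\mu$ is captured at the operator-algebraic level precisely by the normalization $\phi(V_e)=1$ for every $e\in E(S)$; once this correspondence is firmly established, each individual implication becomes a localized rerun of the corresponding step in Theorem~\ref{invsem_th_ame1}, and no genuinely new estimate is required. The most delicate bookkeeping, and the place I would pay most attention, is checking that the operators $W_1, W_2$ constructed from the paradoxical decomposition of $D_e$ indeed have initial and final projections sitting inside $P_{D_e}$, which follows from $s_i A_i, t_j B_j\subset D_e$ and from the inverse semigroup identities used already in Theorem~\ref{invsem_th_ame1}.
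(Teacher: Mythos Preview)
Your proposal is correct and is exactly the approach the paper intends: the paper's own proof is merely the one-line remark that the result is ``straightforward (see the proof of Theorem~\ref{invsem_th_ame1})'', and you have carried out precisely that localization of Theorem~\ref{invsem_th_ame1} to the projections $V_e=P_{D_e}$, together with the appeal to Theorem~\ref{invsem_th_ame01} for $(2)\Rightarrow(1)$.
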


    In the appendix to \cite{H87}, Rosenberg showed that any countable discrete group $G$ with a quasidiagonal left regular representation is
amenable (see \cite{BO08}). This result implies that if $C^*_r(G)$ is quasidiagonal then $G$ is amenable (cf., \cite[Corollary~7.1.17]{BO08}). That
the reverse implication also holds was recently shown in \cite[Corollary~C]{TWW15}. Quasidiagonality of a C*-algebra can be defined in terms of a net
of unital completely positive (u.c.p.) maps (see, for example, \cite[Definition~7.1.1]{BO08} and \cite{V91,V93}):
    \begin{definition}\label{def:abstractqd}
      A unital separable C*-algebra $\mathcal{A}$ is called {\em quasidiagonal} if there exists a sequence of u.c.p. maps $\varphi _n \colon \mathcal{A}\to M_{k_n}(\mathbb C)$ which is both asymptotically multiplicative, i.e., $\| \varphi_n (AB) -\varphi_n(A)\varphi_n (B) \| \to 0$, $A,B\in \mathcal{A}$, and asymptotically isometric, i.e., $\| A \| =\lim_{n\to \infty} \| \varphi_n (A) \| $, $A\in \mathcal{A}$.
    \end{definition}

    We conclude this article by showing that Rosenberg's implication still holds for some special class of inverse semigroups. We also prove that the reverse implication is false (the so called \textit{Rosenberg conjecture} in the case of groups, see \cite{BO08,TWW15}). Recall that the \textit{reduced C$^*$-algebra of an inverse semigroup} is the C$^*$-algebra generated by the left regular representation:
    \begin{equation}
      C_r^*\left(X\right) := C^*\left(\left\{V_s\right\}_{s \in S}\right) \subset \mathcal{R}_X. \nonumber
    \end{equation}
    Note that the following theorem can only be true for the reduced C$^*$-algebras (either in this context or for groups), since the uniform Roe algebras $\mathcal{R}_X$ (and $\mathcal{R}_G$) are almost never finite, and thus almost never quasi-diagonal (see~\cite[Proposition~7.1.15]{BO08}).
    \begin{theorem}\label{invsem_thm_rosenberg}
      Let $S$ be a countable and discrete inverse semigroup with identity $1 \in S$, and let $\alpha \colon S\to \mathcal I (X)$ be a representation. Then:
      \begin{enumerate}
        \item \label{rosenberg_dommeas} If $C^*_r(X)$ is quasidiagonal then $X$ is $S$-domain measurable.
        \item \label{rosenberg_ame}     If $C^*_r(X)$ is quasidiagonal and $S$ has a minimal projection then $X$ is $S$-amenable.
        \item \label{rosenberg_cntexm}  There are amenable inverse semigroups with and without minimal projections with non-quasidiagonal reduced C$^*$-algebras.
      \end{enumerate}
    \end{theorem}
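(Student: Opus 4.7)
The plan proceeds in the three parts of the theorem.

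For Part~(1), assume $C_r^*(X)$ is quasidiagonal, and fix u.c.p.\ maps $\varphi_n\colon C_r^*(X)\to M_{k_n}(\mathbb{C})$ witnessing this (Definition~\ref{def:abstractqd}). By Arveson's extension theorem (using injectivity of $M_{k_n}(\mathbb{C})$) extend them to u.c.p.\ maps $\tilde\varphi_n\colon\mathcal{B}(\ell^2(X))\to M_{k_n}(\mathbb{C})$ and, for a free ultrafilter $\omega$ on $\mathbb{N}$, set
\[
  \phi(T) := \lim_{n\to\omega}\mathrm{tr}_{k_n}\bigl(\tilde\varphi_n(T)\bigr),
\]
a state on $\mathcal{B}(\ell^2(X))$. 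The crucial step is to show that $\phi(V_sT)=\phi(TV_s)$ for every $s\in S$ and $T\in\mathcal{B}(\ell^2(X))$: since $V_s\in C_r^*(X)$, asymptotic multiplicativity of $\varphi_n$ on $C_r^*(X)$ yields $\|\varphi_n(V_s^*V_s)-\varphi_n(V_s)^*\varphi_n(V_s)\|\to 0$, the Schwarz inequality for u.c.p.\ maps then forces $\|\tilde\varphi_n(V_sT)-\varphi_n(V_s)\tilde\varphi_n(T)\|\to 0$ (and symmetrically for $TV_s$), and the traciality of $\mathrm{tr}_{k_n}$ on $M_{k_n}(\mathbb{C})$ gives the identity in the limit. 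Setting $\mu(A):=\phi(P_A)$ then defines a finitely additive probability measure on $X$, and for $B\subseteq D_{s^*s}$ the computation
\[
  \mu(sB)=\phi(V_sP_BV_{s^*})=\phi(P_BV_{s^*}V_s)=\phi(P_BP_{s^*s})=\mu(B)
\]
shows that $X$ is $S$-domain measurable.

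For Part~(2), let $e_0\in E(S)$ be the minimal projection. By Lemma~\ref{invsem_lemma_mincomm} it commutes with every $s\in S$, so $V_{e_0}$ is a central projection of $C_r^*(X)$ and $D_{e_0}\subset X$ is $S$-invariant. Consequently the corner $V_{e_0}C_r^*(X)V_{e_0}=V_{e_0}C_r^*(X)$ is a unital hereditary $C^*$-subalgebra of $C_r^*(X)$; since hereditary $C^*$-subalgebras inherit quasidiagonality, it is again quasidiagonal. Under the isometric identification $V_{e_0}\ell^2(X)\cong\ell^2(D_{e_0})$ this corner is naturally identified with the reduced $C^*$-algebra of the restricted representation $\alpha|_{D_{e_0}}\colon S\to\mathcal{I}(D_{e_0})$ (the generator $V_{e_0}V_s V_{e_0}=V_{e_0}V_s=V_{e_0 s}$ corresponds to the restriction of $V_s$ to $\ell^2(D_{e_0})$). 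Applying Part~(1) to this restricted representation shows $D_{e_0}$ is $S$-domain measurable, hence by Theorem~\ref{invsem_th_ame0} not $S$-paradoxical under the restricted action; $S$-invariance of $D_{e_0}$ in $X$ means this coincides with non-$S$-paradoxicality of $D_{e_0}$ in the ambient representation, so Proposition~\ref{pro:invsem_cor_min} yields that $X$ is $S$-amenable.

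For Part~(3), construct explicit counterexamples starting from a non-amenable countable discrete group $G$ with $C_r^*(G)$ not quasidiagonal (for concreteness $G=\mathbb{F}_2$, by Rosenberg's theorem). With a minimal projection, take $S:=G\sqcup\{0\}$ with absorbing $0$: this is amenable (the point mass at $0$ is invariant) with minimal projection $0$, and under the representation $\alpha\colon S\to\mathcal{I}(G)$ given by $\alpha_g(h)=gh$ and $\alpha_0=\emptyset$ one has $C_r^*(X)=C_r^*(G)$, not quasidiagonal. Without a minimal projection, adjoin to the previous $S$ an infinite descending central semilattice $\{e_n\}_{n\ge 1}$ with $e_ne_m=e_{\max(n,m)}$ and $ge_n=e_n=e_ng$ for $g\in G$; the resulting inverse semigroup has $E(S)$ without a bottom, remains amenable (invariant measures arise as ultralimits of the Dirac masses $\delta_{e_n}$), and admits the analogous representation on $G$ with $\alpha_{e_n}=\emptyset$, so again $C_r^*(X)=C_r^*(G)$ fails quasidiagonality.

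The main technical obstacle lies in Part~(1): passing from the abstract asymptotic multiplicativity of $\varphi_n$ on the subalgebra $C_r^*(X)$ to the asymptotic tracial commutation at the level of the ambient $\mathcal{B}(\ell^2(X))$ via the multiplicative-domain argument. Parts~(2) and~(3) are structurally routine once Part~(1) and the basic inheritance properties of quasidiagonality are in place.
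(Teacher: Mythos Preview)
Your argument for Part~(1) is essentially the paper's: both obtain an amenable trace on $C_r^*(X)$ from quasidiagonality (the paper simply cites \cite[Proposition~7.1.6]{BO08}, whereas you spell out the multiplicative-domain argument behind it) and then read off a domain measure from any hypertrace extension.

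Your Part~(2), however, is a genuinely different and cleaner route. The paper works hard to manufacture a \emph{new} quasidiagonal approximation whose traces are asymptotically normalized at $V_{e_0}$: it analyzes the spectrum of $\varphi_n(V_{e_0})$, extracts an approximate spectral projection $Q_n^*Q_n$, and amplifies by a large multiplicity $m_n$ to force $\mathrm{tr}(\phi_n(V_{e_0}))\to 1$. You instead observe that centrality of $V_{e_0}$ (Lemma~\ref{invsem_lemma_mincomm}) makes the corner $V_{e_0}C_r^*(X)$ a direct summand, hence quasidiagonal, and that this corner \emph{is} $C_r^*(D_{e_0})$ for the restricted representation; Part~(1) plus Proposition~\ref{pro:invsem_cor_min} then finishes. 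This avoids all the spectral bookkeeping and makes transparent why the minimal-projection hypothesis is exactly what is needed. (Both arguments silently assume $D_{e_0}\neq\emptyset$; you might note this.)

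Your Part~(3) needs a small repair in the ``without minimal projection'' example. You adjoin a descending chain $\{e_n\}$ to $S=G\sqcup\{0\}$ with $ge_n=e_n$, but you do not specify $0\cdot e_n$; if $0$ remains absorbing then $0$ is still the minimal projection, defeating the purpose. Either drop the $0$ altogether (take $G\sqcup\{e_n\}_{n\ge 1}$ with $ge_n=e_ng=e_n$) or declare $0\cdot e_n=e_n$ so that $1>0>e_1>e_2>\cdots$ has no bottom. The paper sidesteps this by using the bicyclic monoid $\langle a,a^*\mid a^*a=1\rangle$, whose projection lattice $\{a^n(a^*)^n\}$ is an infinite descending chain for free, and whose reduced algebra contains a proper isometry so is not even finite. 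Also note that the paper interprets ``reduced C$^*$-algebra'' in Part~(3) as $C_r^*(S)$ for the left regular representation; your examples use auxiliary representations on $G$, which is fine for exhibiting \emph{some} non-quasidiagonal $C_r^*(X)$ but is a slightly weaker statement than the paper's.
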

    \begin{proof}
      As is customary in the literature, we will denote by $\text{tr}_{k_n}$ the normalized trace in $M_{k_n}\left(\mathbb{C}\right)$, while
$\text{Tr}$ will stand for the usual trace, i.e., $\text{tr}_{k_n}\left(\cdot\right) = \text{Tr}\left(\cdot\right)/k_n$.

      The proof of (\ref{rosenberg_dommeas}) is a particular case of Proposition~7.1.6 in \cite{BO08}. Indeed, let $\varphi_n \colon C_r^*\left(X\right) \rightarrow M_{k_n}\left(\mathbb{C}\right)$ be a sequence of u.c.p. maps that are asymptotically multiplicative and isometric. Then any cluster point $\tau$ of $\left\{\text{tr}_{k_n} \circ \varphi_n\right\}_{n \in \mathbb{N}}$ is an amenable trace of $C_r^*\left(X\right)$. Thus let $\Phi$ be any hypertrace extending $\tau$, and consider the measure $\mu\left(A\right) := \Phi\left(P_A\right)$. It follows from Eq.~(\ref{eq_trace_meas}) that $\mu$ is a domain measure.

      Let $e_0 \in E\left(S\right)$ be the minimal projection and consider $\varphi_n \colon C^*_r(X) \rightarrow
\mathbb{M}_{k_n}\left(\mathbb{C}\right)$ a sequence of unital completely positive (u.c.p.) maps 
      that are asymptotically multiplicative and asymptotically isometric (see Definition~\ref{def:abstractqd}). To prove (\ref{rosenberg_ame}) we
will construct a new sequence of u.c.p. maps $\phi_n$ that are asymptotically multiplicative, asymptotically isometric {\em and} with asymptotically
normalized trace in $V_{e_0}$. For this, recall that, by Lemma~\ref{invsem_lemma_mincomm}, $e_0$ commutes with every $s \in S$.

      Observe that $\varphi_n\left(V_{e_0}\right)$ is a positive matrix, whose norm is greater than $1 - \varepsilon_n$ and such that $\left|\left|\varphi_n\left(V_{e_0}\right) - \varphi_n\left(V_{e_0}\right) \varphi_n\left(V_{e_0}\right)\right|\right| < \varepsilon_n$ for some $\varepsilon_n  > 0$
      with $\varepsilon_n \rightarrow 0$ when $n \rightarrow \infty$. We will assume that $\varepsilon_n < 1/4$ for all $n$. 
      A routine exercise shows that the spectrum of $\varphi_n\left(V_{e_0}\right)$ is contained in $\left[0, \delta_n\right) \cup \left(1 - \delta_n, 1\right]$
      where $\delta_n = \frac{1}{2}(1- \sqrt{1-4\varepsilon_n})$.  Let $r_n$ be the number of eigenvalues of $\varphi_n\left(V_{e_0}\right)$ in $\left(1 - \delta_n, 1 \right]$, and note 
      that $r_n \geq 1$ for large $n \in \mathbb{N}$, since $\left|\left|\varphi_n\left(V_{e_0}\right)\right|\right| \geq 1 - \varepsilon_n$. 
      Let $W_n \subset \mathbb{C}^{k_n}$ be the subspace generated by the eigenvectors of $\varphi_n\left(V_{e_0}\right)$ of eigenvalues close to $1$. Finally, let $Q_n \colon \mathbb{C}^{k_n} \rightarrow \mathbb{C}^{r_n}$ 
      be a linear map such that $Q_n |_{W_n}$ is an isometry onto $\mathbb{C}^{r_n}$ and $Q |_{W_n^{\perp}} = 0$, i.e.,
      \begin{equation}
        Q_n \colon \mathbb{C}^{k_n} \rightarrow \mathbb{C}^{r_n}, \;\; Q_nQ_n^* = 1_{r_n} \;\; \text{and} \;\; Q_n^*Q_n = P_{W_n}. \nonumber
      \end{equation}
      For each $n \in \mathbb{N}$ let $m_n \in \mathbb{N}$ be large enough such that
      \begin{equation}\label{rosenberg_choice_m}
        \frac{m_n r_n}{k_n + m_n r_n} \left(1 - \delta_n\right) \geq 1 - 2 \; \delta_n.
      \end{equation}
      Consider the maps:
      \begin{equation}\label{rosenberg_eq_phi}
        \phi_n \colon C_r^*\left(X\right) \rightarrow M_{k_n + m_n r_n}\left(\mathbb{C}\right), \quad A \mapsto \varphi_n\left(A\right) \oplus \big( Q_n \varphi_n\left(A\right) Q_n^* \otimes 1_{m_n}\big).
      \end{equation}
      By construction it is clear that the maps $\phi_n$ are unital, completely positive and asymptotically isometric. They are also asymptotically multiplicative. For this first observe that, using the minimality of $e_0$ and Lemma~\ref{invsem_lemma_mincomm}, $V_{e_0}$ commutes with every $A \in C^*_r\left(X\right)$. Thus, by summing and substracting $\varphi_n\left(V_{e_0}\right) \varphi_n\left(A\right)$, $\varphi_n\left(A\right) \varphi_n\left(V_{e_0}\right)$ and $\varphi_n\left(V_{e_0}A\right)$, we have
      \begin{align}
        || Q_n^*Q_n \varphi_n\left(A\right) - \varphi_n\left(A\right) Q_n^*Q_n || & \leq 2 || \varphi_n\left(A\right) || \; || \varphi_n\left(V_{e_0}\right) - Q_n^*Q_n || + || \varphi_n\left(V_{e_0}\right) \varphi_n\left(A\right) - \varphi_n\left(V_{e_0}A\right) || \nonumber \\
        & \quad \quad \quad \quad \quad \quad \quad \quad \quad + || \varphi_n\left(A\right) \varphi_n\left(V_{e_0}\right) - \varphi_n\left(A V_{e_0}\right)|| \nonumber \\
        & \leq 2 || A || \; || \varphi_n\left(V_{e_0}\right) - Q_n^*Q_n || + || \varphi_n\left(V_{e_0}\right) \varphi_n\left(A\right) - \varphi_n\left(V_{e_0}A\right) || \nonumber \\
        & \quad \quad \quad \quad \quad \quad \quad \quad \quad + || \varphi_n\left(A\right) \varphi_n\left(V_{e_0}\right) - \varphi_n\left(A V_{e_0}\right)|| \xrightarrow{n \rightarrow \infty} 0. \nonumber
      \end{align}
      This asymptotic commutation gives the asymptotic multiplicativity of $\phi_n$ by a straightforward computation since, for any $A, B \in C_r^*\left(X\right)$
      \begin{align}
        ||\phi_n\left(AB\right) - \phi_n\left(A\right) \phi_n\left(B\right)|| & \leq ||\varphi_n\left(AB\right) - \varphi_n\left(A\right) \varphi_n\left(B\right)|| \nonumber \\
        & \quad \quad \quad + || Q_n \varphi_n\left(AB\right) Q_n^* - Q_n \varphi_n\left(A\right) Q_n^*Q_n \varphi_n\left(B\right) Q_n^*|| \nonumber \\
        & \leq 2 \, || \varphi_n\left(AB\right)  - \varphi_n\left(A\right) \varphi_n\left(B\right) || \nonumber \\
        & \quad \quad \quad + ||B|| \, || Q_n^* Q_n \varphi_n\left(A\right) - \varphi_n\left(A\right) Q_n^*Q_n || \xrightarrow{n \rightarrow \infty} 0.\nonumber
      \end{align}
      Furthermore, the maps $\phi_n$ have the desired asymptotically normalized trace property at $V_e$:
      \begin{align}
        1 \geq \text{tr}\left(\phi_n\left(V_{e_0}\right)\right) & = \text{tr}\left(\varphi_n\left(V_{e_0}\right)\right) + m_n \, \text{tr}\left(Q_n \varphi_n\left(V_{e_0}\right) Q_n^*\right) \geq \frac{m_n}{k_n + m_n r_n} \text{Tr}\left(Q_n \varphi_n\left(V_{e_0}\right)Q_n^*\right) \nonumber \\
        & = \frac{m_n}{k_n + m_n r_n} \text{Tr}\left(Q_n^* Q_n \varphi_n\left(V_{e_0}\right)\right) \geq \frac{m_n r_n}{k_n + m_n r_n} \left(1 - \delta_n\right) \geq 1 - 2 \; \delta_n \xrightarrow{n \rightarrow \infty} 1, \nonumber
      \end{align}
      where the last inequality follows from the choice of $m_n$, see Eq.~(\ref{rosenberg_choice_m}). By the discussion in (\ref{rosenberg_dommeas}) and Theorem~\ref{invsem_th_ame2}, any cluster point $\Phi$ of $\left\{\text{tr}_{k_n + m_nr_n} \circ \phi_n\right\}_{n \in \mathbb{N}}$ will give an amenable trace of $C_r^*\left(X\right)$ normalized at $V_{e_0}$. Indeed, observe that
      \begin{equation}
        \mu\left(D_{e_0}\right) = \Phi\left(V_{e_0}\right) = \lim_{n \rightarrow \infty} \text{tr}_{k_n + m_n r_n}\left(\phi_n\left(V_{e_0}\right)\right) = 1. \nonumber
      \end{equation}
      We conclude that $\mu$ is a domain-measure localized at $D_{e_0}$ and, by Proposition~\ref{pro:invsem_cor_min} and Theorem~\ref{invsem_th_ame2}, $X$ is then $S$-amenable.

      Finally, for (\ref{rosenberg_cntexm}), consider the bicyclic inverse monoid $S = \langle a, a^* \mid a^*a = 1\rangle$. It is routine to show that $E(S) = \left\{1, aa^*, a^2(a^*)^2, \dots \right\}$ and, thus, $S$ has no minimal projection. Moreover, it is amenable (see~\cite[pp.~311]{DN78}) and $C^*_r\left(S\right)$ is not quasidiagonal (since it is not even finite, see~\cite[Proposition~7.1.15]{BO08}).
      
      For the case when $S$ does have a minimal projection, consider the semigroup $T = \mathbb{F}_2 \sqcup \left\{0\right\}$, where $0$ is a zero element. Since $T$ has a zero element it follows that $T$ is amenable (take $\mu$ atomic with total mass at $0$) and has a minimal projection (namely $0$). Furthermore, note that $C_r^*\left(S\right)$ is not quasidiagonal, since it contains the reduced group C*-algebra of $\mathbb{F}_2$, which is not quasidiagonal (see~\cite[Proposition~7.1.10]{BO08} and~\cite[Corollary~C]{TWW15}).
    \end{proof}

    \begin{remark}
      The condition of the existence of a minimal projection $e_0$ in Theorem~\ref{invsem_thm_rosenberg}(\ref{rosenberg_ame}) is assumed to guarantee
that it commutes with every element $s \in S$. In particular, the proof of (\ref{rosenberg_ame}) in Theorem~\ref{invsem_thm_rosenberg} can also be
applied to the more general setting of a unital and separable C$^*$-algebra $\mathcal{A}$ and a projection $P \in \mathcal{A}$, with $P$ commuting
with every $A \in \mathcal{A}$. Indeed, suppose that $\mathcal{A}$ is also quasidiagonal. Then, by the same argument, the construction of
Eq.~(\ref{rosenberg_eq_phi}) gives a quasidiagonal approximation of $\mathcal{A}$ with asymptotic normalized trace at $P$. In both cases, however, the
condition that $AP = PA$ for every $A \in \mathcal{A}$ is essential.
    \end{remark}
    
We conclude with some natural questions. Suppose $\alpha \colon S \rightarrow \mathcal{I}(X)$ is a representation of an inverse semigroup $S$ on a set
$X$.
    \begin{enumerate}
      \item[\textbf{Q1:}] Suppose that $C_r^*\left(X\right)$ is quasidiagonal. Is $X$ then $S$-amenable?
      \item[\textbf{Q2:}] Is there a stronger notion than the $S$-amenability of $X$ that guarantees the quasidiagonality of
$C_r^*\left(X\right)$?      
    \end{enumerate}

  \providecommand{\bysame}{\leavevmode\hbox to3em{\hrulefill}\thinspace}
  
\end{document}